\newtheorem{proposition}{Proposition}[section]
\newtheorem{lemma}[proposition]{Lemma}
\newtheorem{corollary}[proposition]{Corollary}
\newtheorem{theorem}[proposition]{Theorem}
\theoremstyle{definition}
\newcommand{\selabel}[1]{\label{se:#1}}
\newcommand{\seref}[1]{Section~\ref{se:#1}}
\def\<{\leqslant}
\def\>{\geqslant}
\def\a{\alpha}
\def\b{\beta}
\def\g{\gamma}
\def\O{\Omega}
\def\ol{\overline}
\def\t{\triangle}
\def\e{\varepsilon}
\def\oo{\infty}
\def\s{\sigma}
\def\ot{\otimes}
\def\ra{\rightarrow}
\date{}
\begin{document}
\title[Tensor products of modules over Drinfeld Doubles of Taft algebras]
{Indecomposable decomposition of tensor products of modules over Drinfeld Doubles of Taft algebras}
\author{Hui-Xiang Chen}
\address{School of Mathematical Science, Yangzhou University,
Yangzhou 225002, China}
\email{hxchen@yzu.edu.cn}
\author{Hassen Suleman Esmael Mohammed}
\address{School of Mathematical Science, Yangzhou University,
Yangzhou 225002, China}
\email{esmailhassan313@yahoo.com}
\author{Hua Sun}
\address{School of Mathematical Science, Yangzhou University,
Yangzhou 225002, China}
\email{997749901@qq.com}
\thanks{2010 {\it Mathematics Subject Classification}. 16E05, 16G99, 16T99}
\keywords{indecomposable module, tensor product, Hopf algebra, Drinfeld double}
\begin{abstract}
In this paper, we study the tensor product structure of category of finite dimensional
modules over Drinfeld doubles of Taft Hopf algebras.
Tensor product decomposition rules for all finite dimensional indecomposable modules
are explicitly given.
\end{abstract}
\maketitle

\section{\bf Introduction}\selabel{1}

Representations of a Hopf algebra (up to isomorphism) form a ring, called Green ring,
in which the multiplication is given by the tensor product over the base field, and this ring is a
commutative ring in the case of the Drinfeld double and any quasitriangular Hopf algebras.
The tensor product of representations is an important ingredient in the representation theory of Hopf
algebra and quantum groups. In particular, the decomposition of the tensor product of
indecomposable modules into direct sum of indecomposables has received enormous attention.

However, in general, very little is known about how a tensor product of
two indecomposable modules decomposes into a direct sum of indecomposable modules.
There are some results for the decompositions of tensor products of modules over a Hopf algebra or a quantum group.
Premet \cite{Premet1991} dealt with finite dimensional indecomposable restricted modules for restricted simple
3-dimensional Lie algebra over an algebraically closed field of characteristic $p>2$, and studied
the decomposition of tensor product of such modules.
Witherspoon \cite{With} studied the Drinfeld double of a finite dimensional group algebra
in positive characteristic. She proved that the Green ring of the Drinfeld double of a
group algebra decomposes as a product of ideals associated to some subgroups of the original
group. Cibils \cite{Cib} determined all the graded Hopf algebras on a cycle path coalgebra
(which are just equal to the generalized Taft algebras
(see \cite{ChenHuangYeZhang2004, HuangChenZhang2004, Rad1975, Ta})),
and consider the decomposition of the tensor product of two indecomposable modules (see also \cite{Gunn}).
Moreover, Cibils also computed the Green ring of the Sweedler 4-dimensional Hopf algebra by generators
and relations. Kondo and Saito gave the decomposition of tensor products of modules
over the restricted quantum universal enveloping algebra associated to $\mathfrak{sl}_2$
in \cite{ks}. Zhang, Wu, Liu and Chen \cite{ZWLC} studied the ring structures of the Grothendieck groups
of the Drinfeld doubles of the Taft algebras $H_n(q)$.
Recently, Chen, Van Oystaeyen and Zhang \cite{ChVOZh} computed the Green rings of Taft algebras
$H_n(q)$, using the decomposition of tensor products of modules given by Cibils \cite{Cib}.
Li and Zhang \cite{LiZhang} computed the Green rings of the generalized Taft algebras.
When $n=2$, the Taft algebra $H_2(-1)$ is exactly the Sweedler's 4-dimensional Hopf algebra $H_4$
(see \cite{Sw, Ta}). Chen \cite{Ch5} gave the decomposition of tensor products of modules
over $D(H_4)$ and described the Green ring of $D(H_4)$.

We defined a Hopf algebra $H_n(1,q)$ by generators and relations
in \cite{Ch1} (see the next section), which is isomorphic to the Drinfeld double of a Taft algebra $H_n(q)$.
We also determined all finite dimensional indecomposable modules over $H_n(1,q)$ in \cite{Ch2, Ch3, Ch4}.
Taft algebras belong to the class of biproduct of Nichols algebras as well as ``rank one nilpotent type"
algebras. The presentation of Drinfeld doubles of rank one pointed Hopf algebras by generators and relations
is given in \cite{KropRad2006}, and for general biproduct of Nichols algebras in
\cite{RadSch2008}. The Drinfeld doubles of Taft algebras are examples of liftings of quantum planes,
whose simple modules, projective covers, primitive idempotents, blocks and quivers
are described in \cite{ChinKrop2009}.
Erdmann, Green, Snashall and Taillefer \cite{EGST} studied the representations of the Drinfeld
double of the generalized Taft algebras, and determined the decompositions
of the tensor products of two simple modules. They also described the non-projective summands of
the tensor products of some other modules, but the projective summands of these tensor products
are not described. A natural question is how to determine the decomposition of tensor
product of two indecomposable modules over the Drinfeld doubles of the (generalized) Taft algebras.

In this paper, we investigate the indecomposable decompositions of the tensor products of indecomposable
modules over the Drinfeld doubles $H_n(1, q)$ of the Taft algebras $H_n(q)$ for $n>2$.
The paper is organized as follow. In \seref{2},
we recall the structure of $H_n(1,q)$, its relation with the small quantum groups
and the classification of the indecomposable modules over $H_n(1,q)$.
In \seref{3}, we investigate the tensor product of a simple module with an indecomposable module over $H_n(1, q)$,
and decompose such tensor products into a direct sum of indecomposable modules,
where the decompositions of the tensor products of simple modules are known from \cite{Ch2, EGST}.
In \seref{4}, we investigate the tensor product of an indecomposable projective module with
a non-simple indecomposable module, and decompose such tensor products into a direct sum of indecomposable modules.
In \seref{5}, we investigate the tensor products of non-simple non-projective
indecomposable modules, and decompose such tensor products into a direct sum of indecomposable modules.

\section{\bf Preliminaries}\selabel{2}

Throughout, we work over an algebraically closed field $k$. Unless
otherwise stated, all algebras, Hopf algebras and modules are
defined over $k$; all modules are left modules and finite dimensional;
all maps are $k$-linear; dim and $\otimes$ stand for ${\rm dim}_k$ and $\otimes_k$,
respectively. For the theory of Hopf algebras and quantum groups, we
refer to \cite{Ka, Mon, Sw}. For the representation theory of finite dimensional algebras,
we refer to \cite{ARS}. Let $\mathbb Z$ denote all integers, and ${\mathbb Z}_n={\mathbb Z}/n{\mathbb Z}$
for an integer $n$.

\subsection{Module categories and duality}\selabel{2.1}
~~

For a finite dimensional algebra $A$, let mod$A$ denote the category of finite dimensional $A$-modules.
For a module $M$ in mod$A$ and a nonnegative integer $s$, let $sM$ denote the
direct sum of $s$ copies of $M$. Then $sM=0$ if $s=0$.
Let $P(M)$ and $I(M)$ denote the projective cover and the injective envelope of $M$, respectively.
Let l$(M)$ denote the length of $M$,
and let ${\rm rl}(M)$ denote the Loewy length (=radical length=socle length)
of $M$.

Let $H$ be a finite dimensional Hopf algebra. Then mod$H$ is a monoidal category \cite{Ka, Mon}.
If $H$ is a quasitriangular Hopf algebra, then $M\ot N\cong N\ot M$ for any $H$-modules
$M$ and $N$. It is well known that the Drinfeld double $D(H)$ of a finite dimensional Hopf algebra
$H$ is always symmetric (see \cite{Lo, ObSch, Ra}).
For any module $M$ in mod$H$, the dual space $M^*={\rm Hom}(M, k)$
is also an $H$-module with the action given by
$$(h\cdot f)(m)=f(S(h)\cdot m), \ h\in H,\ f\in M^*,\ m\in M,$$
where $S$ is the antipode of $H$. It is well known that
$(M\ot N)^*\cong N^*\ot M^*$ for any $H$-modules $M$ and $N$.
If $H$ is quasitriangular, then $S^2$ is inner, and so $M^{**}\cong M$ for any
$M\in{\rm mod}H$ (see \cite{Lo}). In this case, this gives rise to a duality $(-)^*$ from ${\rm mod}H$ to itself.

\subsection{Drinfeld doubles of Taft algebras and small quantum groups}\selabel{1.2}
~~

The Drinfeld doubles of Taft Hopf algebras and their finite representations
were investigated in \cite{Ch1, Ch2, Ch3, Ch4}. The representations of pointed Hopf algebras
and their Drinfeld doubles were also studied in \cite{KropRad2009}.
The Drinfeld doubles of Taft algebras are closely related with small quantum groups.

First assume that $q\in k$ is an $n^{th}$ primitive root of unity, $n\>2$. The Taft Hopf algebra $H_n(q)$ is
generated by two elements $g$ and $h$ subject to the relations (see \cite{Ta}):
$$g^n=1,\quad\quad h^n=0,\quad\quad gh=qhg.$$
The coalgebra structure and the antipode are determined by
$$
\begin{array}{lll}
\t(g)=g\otimes g, & \t(h)=h\otimes g+1\otimes h, & \e(g)=1,\\
\e(h)=0, & S(g)=g^{-1}=g^{n-1}, &  S(h)=-q^{-1}g^{n-1}h.
\end{array}
$$
Note that ${\rm dim}H_n(q)=n^2$, and $\{g^ih^j|0\<i, j\<n-1\}$ forms a $k$-basis for $H_n(q)$.
When $n=2$, $H_2(q)$ is exactly the Sweedler 4-dimensional Hopf algebra.
The Drinfeld double $D(H_n(q))$ can be described as follows.

Let $p\in k$. Then one can define an $n^4$-dimensional Hopf algebra $H_n(p, q)$, which is
generated  as an algebra by $a, b, c$ and $d$ subject to the relations:
$$\begin{array}{lllll}
ba=qab,& db=qbd, & ca=qac,& dc=qcd,& bc=cb,\\
a^n=0, & b^n=1, &c^n=1,& d^n=0, & da-qad=p(1-bc).
\end{array}$$
The coalgebra structure and the antipode are given by
$$
\begin{array}{lll}
\t(a)=a\otimes b+1\otimes a, & \e(a)=0, & S(a)=-ab^{-1}=-ab^{n-1},\\
\t(b)=b\otimes b, & \e(b)=1, & S(b)=b^{-1}=b^{n-1},\\
\t(c)=c\otimes c,& \e(c)=1, & S(c)=c^{-1}=c^{n-1},\\
\t(d)=d\otimes c+1\otimes d,& \e(d)=0, & S(d)=-dc^{-1}=-dc^{n-1}.
\end{array}
$$
$H_n(p, q)$ has a $k$-basis $\{a^ib^jc^ld^k|0\<i, j, l, k\<n-1\}$, and is not semisimple.
If $p\neq 0$, then $H_n(p, q)$ is isomorphic to $D(H_n(q))$ as a Hopf algebra.
In particular, we have $H_n(p, q)\cong H_n(1, q)\cong D(H_n(q))$ for any $p\neq 0$.
For the details, the reader is directed to \cite{Ch1, Ch2}.
When $n=2$ and $p=0$, $H_2(0,q)$ is exactly the Hopf algebra $\ol{\mathcal A}$ in \cite{LiHu}.

Next assume that $q\in k$ is an $m^{th}$ primitive root of unity with $m>2$. Let $U_q$
be the quantum enveloping algebra $U_q(\mathfrak{sl}_2)$ of Lie algebra $\mathfrak{sl}_2$
described in \cite{Ka}.
Then $U_q$ is a Hopf algebra. Let $n=m$ if $m$ is odd, and $n=\frac{m}{2}$ if $m$ is even.
Let $I$ be the ideal of $U_q$ generated by $E^n$, $F^n$ and $K^n-1$. Then $I$ is a Hopf ideal of $U_q$,
and hence one gets a quotient Hopf algebra $\ol{U}_q:=U_q/I$, the small quantum group.
Note that $q^2$ is an $n^{th}$ primitive root of unity.
Hence one can form a Hopf algebra $H_n(1, q^2)$ as above. Then a straightforward verification shows
that there is a Hopf algebra epimorphism $\phi: H_n(1, q^2)\ra\ol{U}_q$ determined by
(see \cite[Proposition 4.5]{Andrea})
$$\phi(a)=E,\ \phi(b)=K,\ \phi(c)=K,\ \phi(d)=q^{-2}(q-q^{-1})FK.$$
Let $C$ be the group of central group-like elements in $H_n(1,q^2)$. Then
Ker$(\phi)=(kC)^+H_n(1,q^2)$. Moreover,
$H_n(1,q^2)\cong kC\ot H_n(1,q^2)/(kC)^+H_n(1,q^2)\cong kC\ot \ol{U}_q$ as algebras if $n$ is odd.
Chari and Premet in \cite{ChariAPremet} worked out all indecomposable
modules for $\ol{U}_q$ when $m$ is odd. Hence the indecomposable modules over $H_n(1,q^2)$ can be induced
from those over $\ol{U}_q$ for any odd $n$. Note that Suter worked out all indecomposable modules
for a slightly different version of $\ol{U}_q$ in \cite{Suter1994}.

\subsection{Indecomposable modules over $H_n(1,q)$}\selabel{1.3}
~~

Let $J:={\rm rad}(H_n(1,q))$ stand for the Jacobson radical of $H_n(1,q)$.
Then  $J^3=0$ by \cite[Corollary 2.4]{Ch4}. This means that the Loewy length
of $H_n(1,q)$ is 3. In order to study the tensor products of modules over $H_n(1,q)$,
we need first to give the structures of all finite dimensional indecomposable $H_n(1,q)$-modules.
We will follow the notations of \cite{Ch4}. Unless otherwise stated, all modules are modules over $H_n(1,q)$
in what follows.

From \cite{Ch4}, we know that the socle series and the radical series of an indecomposable
module coincide. We list all indecomposable modules according to the Loewy length.
There are $n^2$ simple modules up to isomorphism.

Simple modules: $V(l,r)$, $1\<l\<n$, $r\in{\mathbb Z}_n$. $V(l,r)$ has
a standard $k$-basis $\{v_i|1\<i\<l\}$ such that
\begin{equation*}
\begin{array}{ll}
av_i=\left\{
\begin{array}{ll}
v_{i+1},& 1\<i<l,\\
0,& i=l,\\
\end{array}\right.&
dv_i=\left\{
\begin{array}{ll}
0, & i=1,\\
\a_{i-1}(l)v_{i-1},& 1<i\<l,\\
\end{array}\right.\\
bv_i=q^{r+i-1}v_i,\ 1\<i\<l, &cv_i=q^{i-r-l}v_i,\ 1\<i\<l,\\
\end{array}
\end{equation*}
where $\a_i(l)=(i)_q(1-q^{i-l})$ for $1\<i<l\<n$.
The simple modules $V(n,r)$, $r\in{\mathbb Z}_n$, are projective and injective.

Projective modules of Loewy length 3: Let $P(l, r)$ be the  projective cover of
$V(l, r)$, $1\<l<n$, $r\in{\mathbb Z}_n$. Then $P(l, r)$ is the injective envelope of $V(l, r)$ as well.
$P(l, r)$ has a standard $k$-basis $\{v_i|1\<i\<2n\}$ such that
\begin{equation*}
av_i=\left\{
\begin{array}{ll}
v_{i+1}, &1\<i<n \mbox{ or }n+1\<i<2n,\\
0, & i=n \mbox{ or }2n,\\
\end{array}\right.
\end{equation*}
\begin{equation*}
\begin{array}{ll}
bv_i=\left\{
\begin{array}{ll}
q^{r+i-1}v_i, & 1\<i\<n,\\
q^{r+l+i-1}v_i, & n+1\<i\<2n,\\
\end{array}\right. &
cv_i=\left\{
\begin{array}{ll}
q^{i-l-r}v_i, & 1\<i\<n,\\
q^{i-r}v_i, & n+1\<i\<2n,\\
\end{array}\right.\\
\end{array}
\end{equation*}
\begin{equation*}
dv_i=\left\{
\begin{array}{ll}
q^{i-1}v_{2n-l+i-1}, & i=1 \text{ or } l+1,\\
q^{i-1}v_{2n-l+i-1}+\a_{i-1}(l)v_{i-1}, & 1<i\<l,\\
\a_{i-l-1}(n-l)v_{i-1}, & l+1<i\<n,\\
0,& i=n+1 \text{ or } 2n-l+1,\\
\a_{i-n-1}(n-l)v_{i-1}, & n+1<i\<2n-l,\\
\a_{i-2n+l-1}(l)v_{i-1}, & 2n-l+1<i\<2n.\\
\end{array}\right.
\end{equation*}
Moreover, we have (see \cite{Ch4})
$$\begin{array}{l}
{\rm soc}P(l, r)={\rm rad}^2P(l,r)\cong P(l, r)/{\rm rad}P(l, r)=P(l, r)/{\rm soc}^2P(l, r)\cong V(l, r),\\
{\rm soc}^2P(l, r)/{\rm soc}(P(l, r))={\rm rad}P(l, r)/{\rm rad}^2P(l, r)\cong 2V(n-l, r+l).\\
\end{array}$$

For non-isomorphic indecomposable modules with Loewy length 2,
we list them according to the lengths and the co-lengths of their socles.
We say that an indecomposable module $M$ with ${\rm rl}(M)=2$
is of $(s,t)$-type if l$(M/{\rm soc}(M))=s$ and l$({\rm soc}(M))=t$. By \cite{Ch4},
if $M$ is of $(s, t)$-type, then $s=t+1$, or $s=t$, or $s=t-1$.
Note that $M$ is a string module for $s=t+1$ and $s=t-1$; $M$ is a band module for $s=t$.

String modules: The indecomposable modules of $(s+1,s)$-type are given by the syzygy functor $\O$.
Let $V(l, r)$ be the simple modules given above, $1\<l<n$, $r\in{\mathbb Z}_n$.
Then the minimal projective resolutions of $V(l, r)$ are given by
$$
\cdots\ra4P(n-l, r+l)\ra3P(l, r)\ra2P(n-l, r+l)\ra P(l, r)\ra
V(l, r)\ra 0.$$
By these resolutions, one can describe the structure of $\O^sV(l, r)$, $s\>1$
(see \cite{Ch4}). The string module $\O^sV(l, r)$ is of $(s+1, s)$-type.
The indecomposable modules of $(s,s+1)$-type are given by the cosyzygy functor $\O^{-1}$.
For $1\<l<n$ and $r\in{\mathbb Z}_n$, the minimal injective resolutions of $V(l, r)$ are given by
$$
0\ra V(l, r)\ra P(l, r)\ra 2P(n-l, r+l)\ra 3P(l, r)\ra 4P(n-l, r+l)
\ra\cdots.$$
By these resolutions, one can describe the structure of $\O^{-s}V(l, r)$, $s\>1$
(see \cite{Ch4}). The string module $\O^{-s}V(l, r)$ is of $(s, s+1)$-type.

Let $1\<l<n$, $r\in{\mathbb Z}_n$ and $s\>1$. If $s$ is odd, then we have
$$\begin{array}{c}
{\rm soc}(\O^sV(l, r))\cong\O^{-s}V(l, r)/{\rm soc}(\O^{-s}V(l, r))
\cong sV(l, r),\\
{\rm soc}(\O^{-s}V(l, r))\cong\O^sV(l, r)/{\rm soc}(\O^sV(l, r))\cong(s+1)V(n-l, r+l).\\
\end{array}$$
If $s$ is even, then we have
$$\begin{array}{c}
{\rm soc}(\O^sV(l, r))\cong\O^{-s}V(l, r)/{\rm soc}(\O^{-s}V(l, r))\cong sV(n-l, r+l),\\
{\rm soc}(\O^{-s}V(l, r))\cong\O^sV(l, r)/{\rm soc}(\O^sV(l, r))\cong(s+1)V(l, r).\\
\end{array}$$

Band modules: The indecomposable modules of $(s, s)$-type can be described as follows.
Let ${\mathbb P}^1(k)$ be the projective 1-space over $k$.
${\mathbb P}^1(k)$ can be regarded as the set of all 1-dimensional
subspaces of $k^2$. Let $\oo$ be a symbol with $\oo\not\in k$ and
let $\overline k=k\cup\{\oo\}$. Then there is a bijection between
$\overline k$ and $\mathbb{P}^1(k)$: $\a\mapsto L(\a,1)$,
$\oo\mapsto L(1,0)$, where $\a\in k$ and $L(\a,\b)$ denotes the
1-dimensional subspace of $k^2$ with basis $(\a,\b)$ for any
$0\not=(\a,\b)\in k^2$. In the following, we regard
$\mathbb{P}^1(k)=\overline k$.

If $M$ is of $(s,s)$-type then $M\cong M_s(l,r,\eta)$, where $1\<l<n$, $r\in{\mathbb Z}_n$
and $\eta\in{\mathbb P}^1(k)$ (see \cite{Ch4}).
The indecomposable module $M_1(l,r,\oo)$, $1\<l<n$, $r\in{\mathbb Z}_n$, has a standard basis $\{v_1, v_2, \cdots, v_n\}$
such that
\begin{equation*}
\begin{array}{ll}
av_i=\left\{\begin{array}{ll}
0, & i=n-l \mbox{ or }n,\\
v_{i+1}, & \mbox{ otherwise },\\
\end{array}\right. &
dv_i=\left\{\begin{array}{ll}
v_n, & i=1,\\
\a_{i-1}(n-l)v_{i-1}, & 1<i\<n-l,\\
0, & i=n-l+1,\\
\a_{i-n+l-1}(l)v_{i-1}, & n-l+1<i\< n,\\
\end{array}\right.\\
bv_i=q^{r+l+i-1}v_i, & cv_i=q^{i-r}v_i.\\
\end{array}
\end{equation*}
The indecomposable module $M_1(l,r,\eta)$, $1\< l<n$, $r\in{\mathbb Z}_n$, $\eta\in k$,
has a standard basis $\{v_1, v_2, \cdots, v_n\}$ with the action given by
\begin{equation*}
\begin{array}{ll}
av_i=\left\{\begin{array}{ll}
v_{i+1} , & 1\<i<n,\\
0 , & i=n,\\
\end{array}\right. &
dv_i=\left\{\begin{array}{ll}
\eta q^lv_n, & i=1,\\
\a_{i-1}(n-l)v_{i-1}, & 1<i\<n-l,\\
0, & i=n-l+1,\\
\a_{i-n+l-1}(l)v_{i-1}, & n-l+1<i\< n,\\
\end{array}\right.\\
bv_i=q^{r+l+i-1}v_i, & cv_i=q^{i-r}v_i.\\
\end{array}
\end{equation*}
Then the band modules $M_s(l, r,\eta)$ are determined recursively by the almost split sequences
$$0\ra M_s(l, r,\eta)\ra M_{s-1}(l, r,\eta)\oplus M_{s+1}(l, r,\eta)\ra M_s(l, r,\eta)\ra 0,$$
where $s\>1$, $M_0(l, r,\eta)=0$, $1\<l<n$, $r\in{\mathbb Z}_n$ and $\eta\in{\mathbb P}^1(k)$
(see \cite{Ch3, Ch4}). $M_s(l, r,\eta)$ also can be constructed recursively by using pullback
(see \cite[pp. 2823-2824]{Ch4}). $M_s(l, r,\eta)$ is a submodule of $sP(l,r)$
and a quotient module of $sP(n-l, r+l)$, and there is an exact sequence
$$0\ra M_s(l, r,\eta)\hookrightarrow sP(l, r)\ra M_s(n-l, r+l, -\eta q^l)\ra 0.$$
Hence $\O M_s(l, r, \eta)\cong \O^{-1} M_s(l, r, \eta)\cong M_s(n-l, r+l, -\eta q^l)$.
Moreover, for any $1\<i<s$, $M_s(l, r,\eta)$ contains a unique
submodule of $(i,i)$-type, which is isomorphic to $M_i(l, r,\eta)$
and the quotient module of $M_s(l, r,\eta)$ modulo the submodule of
$(i,i)$-type is isomorphic to $M_{s-i}(l, r,\eta)$. Hence there is
an exact sequence of modules
$$0\ra M_i(l, r,\eta)\hookrightarrow M_s(l, r, \eta)\ra M_{s-i}(l, r, \eta)\ra 0.$$

Erdmann, Green, Snashall and Taillefer studied the representations of the Drinfeld double
$D(\Lambda_{n,d})$ of the generalized Taft algebras $\Lambda_{n,d}$ in \cite{EGST}.
In case $d=n$, $\Lambda_{n,n}$ is the $n^2$-dimensional Taft Hopf algebra. For this reason,
$\Lambda_{n,d}$ is called a generalized Taft algebra
in \cite{ChenHuangYeZhang2004, HuangChenZhang2004}.
Moreover, $D(\Lambda_{n,n})\cong H_n(1, q)$ as Hopf algebras.
Hence one also can get all indecomposable modules over $H_n(1,q)$ from
\cite{EGST}. In this case, $V(l,r)$ is the simple module $L(1-2r-l,r)$, and
the band modules $M_s(l,r,0)$ and $M_s(l,r,\oo)$ are string modules of even length in \cite{EGST}.

Throughout the following, let $n$ be a fixed positive integer with $n>2$,
and $q\in k$ an $n^{th}$ primitive root of unity.
Let $P(n, r)=V(n, r)$ and $\O^0V(l,r)=V(l,r)$ for all $1\<l<n$ and $r\in\mathbb{Z}_n$,
and let $\a\oo=\oo\a=\oo$ for any $0\neq\a\in k$. Let $\mathcal M$ denote the
category of finite dimensional modules over $H_n(1, q)$.

\section{\bf Tensor product of a simple module with a module}\selabel{3}

In this section, we investigate the tensor product of a simple module with
an indecomposable module. Throughout the following, unless otherwise stated,
a module means a module over $H_n(1,q)$, and an isomorphism means a module isomorphism.

Note that $M\ot N\cong N\ot M$ for any modules $M$ and $N$
since $H_n(1, q)$ is a quasitriangular Hopf algebra.
For any $t\in{\mathbb Z}$, let $c(t):=[\frac{t+1}{2}]$ be the integer part of $\frac{t+1}{2}$.
That is, $c(t)$ is the maximal integer with respect to $c(t)\<\frac{t+1}{2}$.
Then $c(t)+c(t-1)=t$.

\subsection{Tensor product of two simple modules}\selabel{3.1}
~~

The decomposition of the tensor product of two simple modules has been determined in \cite{Ch2, EGST}.
We gave the decomposition of the tensor product $V(l, r)\ot V(l', r')$ for $l+l'\<n+1$,
and described the socle of $V(l, r)\ot V(l', r')$ for $l+l'>n+1$ in \cite{Ch2}.
Erdmann, Green, Snashall and Taillefer described the decomposition of the tensor product of
any two simple modules for the Drinfeld double of the generalized Taft algebras $\Lambda_{n,d}$
in \cite{EGST}. Putting $d=n$ in \cite{EGST}, one can get the decomposition of $V(l, r)\ot V(l', r')$
for $l+l'>n+1$ (also for $l+l'\<n+1$).

{\bf Convention}: If $\oplus_{l\<i\<m}M_i$ is a term in a decomposition of a module,
then it disappears when $l>m$.
For instance, in the decomposition of the following Proposition \ref{3.1}(2),
the term $\oplus_{t+1\<i\leqslant l-1}V(l+l'-1-2i, r+r'+i)$ disappears
when $l'=n$, or equivalently $t=l-1$.

\begin{proposition}\label{3.1} Let $1\leqslant l\leqslant l'\leqslant n$ and $r, r'\in{\mathbb Z}_n$.\\
$(1)$ If $l+l'\leqslant n+1$, then
$V(l, r)\ot V(l', r')\cong \oplus_{i=0}^{l-1}V(l+l'-1-2i, r+r'+i)$. In particular,
$V(1, r)\ot V(l', r')\cong V(l', r+r')$ for all $1\leqslant l'\leqslant n$ and $r, r'\in{\mathbb Z}_n$.\\
$(2)$  If $t=l+l'-(n+1)\>0$, then
$$\begin{array}{rcl}
V(l, r)\ot V(l', r')&\cong&(\oplus_{i=c(t)}^tP(l+l'-1-2i, r+r'+i))\\
&&\oplus(\oplus_{t+1\<i\leqslant l-1}V(l+l'-1-2i, r+r'+i)).\\
\end{array}$$
\end{proposition}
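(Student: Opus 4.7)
The plan is to analyze $V(l,r) \otimes V(l',r')$ by first locating all lowest weight vectors (vectors killed by $d$), and then identifying the indecomposable summands containing them.

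First I would perform the joint $(b,c)$-eigenspace decomposition. From $\Delta(b) = b\otimes b$, $\Delta(c) = c\otimes c$, $\Delta(a) = a\otimes b + 1\otimes a$, and $\Delta(d) = d\otimes c + 1\otimes d$, the basis vector $v_i \otimes v'_j$ has $b$-eigenvalue $q^{r+r'+i+j-2}$ and $c$-eigenvalue $q^{i+j-r-r'-l-l'}$; so the eigenspaces are parameterized by $s := i+j$, and $a$ raises $s$ by one while $d$ lowers $s$ by one. For each $k \in \{0,1,\ldots,l-1\}$ I would construct $w_k = \sum_{j=1}^{k+1} c_j\, v_{k+2-j} \otimes v'_j$ in the weight space $s = k+2$ satisfying $dw_k = 0$; the equation $dw_k = 0$ becomes the recursion
\[
c_{j+1}\,\alpha_j(l') \;=\; -\,c_j\,\alpha_{k+1-j}(l)\, q^{j-r'-l'},\qquad 1\leq j\leq k,
\]
which has a unique nonzero solution up to scalar because, given $l\leq l'$ and $k\leq l-1$, all the $\alpha$'s involved are nonzero.

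In case (1), where $l+l'\leq n+1$, the $a$-orbit of $w_k$ stays within weight spaces $s\leq n+1$ so never triggers the relation $a^n = 0$; direct computation yields $a^{l+l'-1-2k}w_k = 0$ while $a^{l+l'-2-2k}w_k \neq 0$, and so the cyclic submodule $H_n(1,q)\cdot w_k$ is isomorphic to the simple module $V(l+l'-1-2k,r+r'+k)$. Pairwise distinct $b$-eigenvalues make these submodules independent, and the identity $\sum_{k=0}^{l-1}(l+l'-1-2k) = ll'$ forces their direct sum to exhaust the tensor product.

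In case (2), where $t = l+l'-(n+1)\geq 0$, the analysis is more subtle because the $w_k$ need not generate their indecomposable summands. For $k\in[t+1,l-1]$ the argument of case (1) still yields a simple summand $V(l+l'-1-2k, r+r'+k)$. For $k\in[c(t),t]$ the ``expected'' simple would have length $l+l'-1-2k\geq n$, so the indecomposable summand containing $w_k$ must instead be the projective cover $P(l+l'-1-2k, r+r'+k)$ of dimension $2n$, degenerating to $P(n,\cdot) = V(n,\cdot)$ of dimension $n$ precisely when $t$ is even and $k = c(t) = t/2$. In the non-degenerate cases each such projective contains a second $d$-killed weight vector at $b$-weight $q^{r+r'+l+l'-1-k}$, which I identify with $w_{t-k}$ (whose index $t-k$ then lies in $[0,c(t)-1]$), giving a bijection between $\{w_k\}_{k=0}^{l-1}$ and the full collection of lowest weight vectors appearing in the indecomposable summands. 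The main obstacle here is to verify that each projective summand is really the full $P(l+l'-1-2k, r+r'+k)$ and not a proper subquotient, since neither of its $d$-killed vectors generates it as a cyclic module. I would resolve this by constructing explicit generators lifting $w_k$ through the projective cover, combined with a dimension match --- the conjectured decomposition has total dimension $2n$ per non-degenerate projective plus $n$ per degenerate one plus the lengths of the simple summands, summing to $ll'$. Alternatively, invoking the self-duality of $\mathcal M$ (coming from the quasitriangularity of $H_n(1,q)$) together with the known socle of $V(l,r) \otimes V(l',r')$ from \cite{Ch2} pins down the projective summands.
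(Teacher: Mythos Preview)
The paper's proof of this proposition is a single citation: it simply invokes \cite[Theorem 3.1]{Ch2} for part~(1) and \cite[Theorem 4.1]{EGST} for part~(2). Your proposal, by contrast, sketches a direct lowest-weight-vector argument --- essentially reconstructing what those references do. This is a genuinely different route in the sense that you are supplying the argument rather than deferring to it, though the underlying method (Clebsch--Gordan style analysis via $(b,c)$-eigenspaces and $d$-annihilated vectors) is the same one used in \cite{Ch2}.

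Your treatment of part~(1) is correct and complete: the recursion for the $w_k$ is right, the cyclic submodules are pairwise non-isomorphic simples, and the dimension count closes the argument. In part~(2), however, there is a slip in the exposition. You write that for $k\in[c(t),t]$ the ``expected'' simple would have length $l+l'-1-2k\geq n$; but in fact for $k=t$ this length is $n-t\leq n$, and it is the \emph{partner} index $t-k\in[0,c(t)-1]$ for which $l+l'-1-2(t-k)\geq n$. So the logic should run: the $w_{t-k}$ with $t-k<c(t)$ cannot sit in a simple summand of the naive dimension, forcing them into projectives, and then $w_k$ with $k\in[c(t),t]$ is the second lowest-weight vector in that same projective $P(l+l'-1-2k,r+r'+k)$. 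Your bijection $k\leftrightarrow t-k$ is the right mechanism; only the attribution of which $w$ triggers the obstruction is reversed.

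The obstacle you flag --- showing that the projective summand is genuinely all of $P(l+l'-1-2k,r+r'+k)$ rather than a proper submodule --- is real, and your second suggested resolution (combining the socle computation from \cite{Ch2} with injectivity of the $P$'s and a dimension count) is exactly how \cite{EGST} handles it. The first suggestion (explicit lifted generators) would also work but is more laborious.
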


\begin{proof} It follows from \cite[Theorem 3.1]{Ch2} and \cite[Theorem 4.1]{EGST}.
\end{proof}

By the Fundamental Theorem of Hopf modules (see \cite{Mon}),
$M\ot P$ is projective for any projective module $P$ and any module $M$.
Thus, one gets the following corollary.

\begin{corollary}\label{3.2}
The subcategory consisting of semisimple modules and projective modules in
$\mathcal M$ is a monoidal subcategory of $\mathcal M$.
\end{corollary}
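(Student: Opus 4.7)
The plan is to verify that the subcategory in question is closed under the tensor product and contains the unit object; since it is closed under direct summands and direct sums essentially by definition, this will establish that it is a monoidal subcategory of $\mathcal M$. The unit object of $\mathcal M$ is the trivial module $V(1,0)$, which is simple and therefore semisimple, so it lies in the subcategory.

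For closure under $\ot$, I would first observe that every object $X$ of the subcategory decomposes as $X\cong X_{\rm ss}\oplus X_{\rm pr}$ with $X_{\rm ss}$ semisimple and $X_{\rm pr}$ projective. Given two such objects $X\cong X_{\rm ss}\oplus X_{\rm pr}$ and $Y\cong Y_{\rm ss}\oplus Y_{\rm pr}$, distributivity of $\ot$ over $\oplus$ reduces the problem to checking that each of the four summands
\[
X_{\rm ss}\ot Y_{\rm ss},\hp X_{\rm ss}\ot Y_{\rm pr},\hp X_{\rm pr}\ot Y_{\rm ss},\hp X_{\rm pr}\ot Y_{\rm pr}
\]
belongs to the subcategory. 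For the three summands involving a projective factor, I would invoke the Fundamental Theorem of Hopf modules (cited just before the corollary): tensoring with a projective module yields a projective module, so each of these three summands is already projective and hence in the subcategory.

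The remaining case is $X_{\rm ss}\ot Y_{\rm ss}$. Here I would decompose $X_{\rm ss}$ and $Y_{\rm ss}$ as direct sums of simple modules $V(l,r)$, reducing by distributivity to the tensor product $V(l,r)\ot V(l',r')$ of two simples. By \prref{3.1}, this tensor product decomposes either as a direct sum of simple modules (when $l+l'\<n+1$) or as a direct sum of simple modules and indecomposable projectives $P(\cdot,\cdot)$ (when $l+l'>n+1$). In either case, $V(l,r)\ot V(l',r')$ is a direct sum of a semisimple module and a projective module, hence lies in the subcategory. Assembling the four cases shows that $X\ot Y$ is again the direct sum of a semisimple module and a projective module, which completes the verification.

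There is no serious obstacle here; the corollary is essentially a bookkeeping consequence of \prref{3.1} together with the fact that the projective modules form a tensor ideal in $\mathcal M$. The only minor point to be careful about is to confirm that summands $P(n,r)=V(n,r)$ coming from \prref{3.1} fit in consistently with the convention adopted at the end of \seref{2} (they are both simple and projective, so they lie in the subcategory regardless of how one chooses to classify them).
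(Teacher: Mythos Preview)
Your argument is correct and follows exactly the route the paper intends: the sentence preceding the corollary invokes the Fundamental Theorem of Hopf modules to make projectives a tensor ideal, and together with Proposition~\ref{3.1} this yields the result by the distributivity bookkeeping you describe. The paper gives no further proof beyond that remark, so your write-up simply makes explicit what the paper leaves to the reader.
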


\subsection{Tensor product of a simple module with a projective module}\selabel{3.1}
~~

In this subsection, we determine the tensor product $V(l, r)\ot P(l',r')$ of a simple module with an indecomposable
projective module. As pointed out in the last subsection, $V(l, r)\ot P(l',r')$ is projective, and so it is also injective.
Thus, it is enough to determine the socle of $V(l, r)\ot P(l',r')$.
If $M$ is a submodule of the socle of $V(l, r)\ot P(l',r')$, then $P(M)$ ($\cong I(M)$) is isomorphic to a submodule of $V(l, r)\ot P(l',r')$.
We will manage to find a submodule $U$ of the socle of $V(l, r)\ot P(l',r')$ such that $P(U)$ and
$V(l, r)\ot P(l',r')$ have the same dimension. In this case, $V(l, r)\ot P(l',r')\cong P(U)$.
In the following, we will also use the fact that if a projective module $P$ is isomorphic to a submodule
of a quotient module of a module $M$, then $P$ is isomorphic to a summand of $M$.

\begin{theorem}\label{3.3}
Let $1\leqslant l, l'<n$ and $r, r'\in{\mathbb Z}_n$. Assume that $l+l'\leqslant n$.
Let $l_1={\rm min}\{l, l'\}$. Then
$$\begin{array}{rcl}
V(l,r)\ot P(l',r')
&\cong&(\oplus_{i=0}^{l_1-1}P(l+l'-1-2i, r+r'+i))\\
&&\oplus(\oplus_{c(l+l'-1)\<i\<l-1}2P(n+l+l'-1-2i, r+r'+i)).\\
\end{array}$$
\end{theorem}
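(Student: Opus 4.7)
Since $P(l', r')$ is projective and tensor products with a projective over a Hopf algebra are projective (as recorded in Corollary \ref{3.2}), $V(l, r) \otimes P(l', r')$ is projective, and it is also injective since $H_n(1, q)$ is symmetric. I will follow the strategy outlined just before the theorem: construct a semisimple submodule $U \subseteq \mathrm{soc}(V(l, r) \otimes P(l', r'))$ with $\dim P(U) = 2nl = \dim(V(l, r) \otimes P(l', r'))$, whence $V(l, r) \otimes P(l', r') \cong P(U)$ by injectivity.

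\textbf{First family.} The inclusion $V(l', r') = \mathrm{soc}(P(l', r')) \hookrightarrow P(l', r')$ induces $V(l, r) \otimes V(l', r') \hookrightarrow V(l, r) \otimes P(l', r')$. Since $l + l' \leq n$, Proposition \ref{3.1}(1) decomposes the left-hand side as $\bigoplus_{i=0}^{l_1 - 1} V(l + l' - 1 - 2i, r + r' + i)$, which is semisimple and therefore lies in the socle; taking injective envelopes produces $\bigoplus_{i=0}^{l_1 - 1} P(l + l' - 1 - 2i, r + r' + i)$ as direct summands of $V(l, r) \otimes P(l', r')$.

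\textbf{Second family, active only when $l > l'$.} Use the middle Loewy layer of $P(l', r')$: since $\mathrm{rl}(P(l', r')) = 3$, one has $\mathrm{soc}^2(P(l', r'))/V(l', r') \cong 2V(n - l', r' + l')$, giving the short exact sequence $0 \to 2V(n - l', r' + l') \to P(l', r')/V(l', r') \to V(l', r') \to 0$. Tensoring with the exact functor $V(l, r) \otimes -$ embeds $2(V(l, r) \otimes V(n - l', r' + l'))$ into $(V(l, r) \otimes P(l', r'))/(V(l, r) \otimes V(l', r'))$. Applying Proposition \ref{3.1} to $V(l, r) \otimes V(n - l', r' + l')$, projective summands $P(l + n - l' - 1 - 2j, r + r' + l' + j)$ appear for $j = c(l - l' - 1), \ldots, l - l' - 1$: from case (2) when $l > l' + 1$, and from the simple-projective summand $V(n, r + r' + l') = P(n, r + r' + l')$ arising in case (1) when $l = l' + 1$. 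Each such projective, doubled, is a submodule of a quotient of $V(l, r) \otimes P(l', r')$; by the lifting principle stated just before the theorem, it is a direct summand of $V(l, r) \otimes P(l', r')$. The reindexing $i = j + l'$ rewrites these as $2P(n + l + l' - 1 - 2i, r + r' + i)$ for $i = c(l + l' - 1), \ldots, l - 1$, matching the claimed second family. When $l \leq l'$, the range $c(l - l' - 1) \leq j \leq l - l' - 1$ is empty (since $l - l' - 1 < 0$), and so is $c(l + l' - 1) \leq i \leq l - 1$.

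\textbf{Completion.} The two families have disjoint $(m, s)$-labels, since a collision would force $i_2 - i_1 = n/2$ from the first coordinates but $i_1 \equiv i_2 \pmod n$ from the second, impossible for $n \geq 2$. A direct dimension count confirms their combined dimension equals $2nl$, so the submodule $U$ spanned by the socles of these summands has the required property. The main obstacle lies in the enumeration of the second step: handling the semisimple-regime boundary $l = l' + 1$ (where the simple-projective $V(n, \cdot)$ plays the role of projective summand coming from Proposition \ref{3.1}(1)) uniformly with the genuinely non-semisimple case $l > l' + 1$ (handled by Proposition \ref{3.1}(2)), and verifying that the reindexing $i = j + l'$ correctly translates $c(l - l' - 1)$ and $l - l' - 1$ into $c(l + l' - 1)$ and $l - 1$.
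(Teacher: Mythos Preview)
Your proof is correct and follows essentially the same approach as the paper: both use the embedding $V(l,r)\ot V(l',r')\hookrightarrow V(l,r)\ot P(l',r')$ for the first family, and both exploit the middle Loewy layer $2V(n-l',r'+l')$ (equivalently ${\rm soc}(\O^{-1}V(l',r'))$) tensored with $V(l,r)$ to extract the second family of projective summands when $l>l'$, finishing by a dimension count. The only cosmetic differences are that the paper packages directness via an explicit splitting $V={\rm Ker}(\phi)\oplus P$ rather than your label-disjointness/Krull--Schmidt argument, and the paper applies Proposition~\ref{3.1}(2) uniformly for $l>l'$ (since $t=l-l'-1\geqslant 0$) rather than separating out the boundary case $l=l'+1$.
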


\begin{proof}
We first assume that $l\<l'$ and let $V=V(l, r)\ot P(l', r')$. Then
$V_1:=V(l, r)\ot{\rm soc}(P(l', r'))$ is a submodule of $V$.
Since ${\rm soc}(P(l', r'))\cong V(l', r')$, it follows from Proposition \ref{3.1}(1)
that $V_1\cong V(l, r)\ot V(l', r')\cong \oplus_{i=0}^{l-1}V(l+l'-1-2i, r+r'+i)$.
Hence $P(V_1)$ can be embedded into $V$ as a submodule.
Now we have $P(V_1)\cong\oplus_{i=0}^{l-1}P(V(l+l'-1-2i, r+r'+i))
\cong\oplus_{i=0}^{l-1}P(l+l'-1-2i, r+r'+i)$. Since $1\leqslant l+l'-1-2i\leqslant n-1$
for all $0\leqslant i\leqslant l-1$, dim$(P(l+l'-1-2i, i))=2n$, and so
dim$(P(V_1))=2nl={\rm dim}(V)$. This implies
$$V(l, r)\ot P(l', r')\cong\oplus_{i=0}^{l-1}P(l+l'-1-2i, r+r'+i).$$

Next, assume that $l'<l$. Applying $V(l, r)\ot$ to the exact sequence
$0\ra V(l',r')\ra P(l', r')\ra\O^{-1}V(l', r')\ra 0$,
one gets anther exact sequence
$$0\ra V(l,r)\ot V(l',r')\ra V(l,r)\ot P(l',r')\ra V(l,r)\ot\O^{-1}V(l',r')\ra 0.$$
Note that $l\<n-l'$ and $l+n-l'-(n+1)=l-l'-1\>0$.
By soc$(\O^{-1}V(l',r'))\cong 2V(n-l',r'+l')$ and Proposition \ref{3.1}(2), we have
$$\begin{array}{rl}
V(l,r)\ot{\rm soc}(\O^{-1}V(l',r'))&\cong 2V(l,r)\ot V(n-l',r'+l')\\
&\cong(\oplus_{i=c(l-l'-1)}^{l-l'-1}2P(l+n-l'-1-2i, r+r'+l'+i))\\
&\oplus(\oplus_{i=l-l'}^{l-1}2V(l+n-l'-1-2i, r+r'+l'+i))\\
&\cong(\oplus_{i=c(l+l'-1)}^{l-1}2P(n+l+l'-1-2i, r+r'+i))\\
&\oplus(\oplus_{i=l}^{l+l'-1}2V(n+l+l'-1-2i, r+r'+i)).\\
\end{array}$$
Since $\oplus_{i=c(l+l'-1)}^{l-1}2P(n+l+l'-1-2i, r+r'+i)$ is projective and injective,
it follows that there is an epimorphism
$$\phi: V:=V(l,r)\ot P(l',r')\ra \oplus_{i=c(l+l'-1)}^{l-1}2P(n+l+l'-1-2i, r+r'+i)$$
such that Ker$(\phi)$ contains a submodule isomorphic to $V(l,r)\ot V(l',r')$.
Hence $V={\rm Ker}(\phi)\oplus P$, where $P$ is a submodule of $V$ with
$P\cong\oplus_{i=c(l+l'-1)}^{l-1}2P(n+l+l'-1-2i, r+r'+i)$, and Ker$(\phi)$ contains a submodule
$V_1$ with $V_1\cong V(l,r)\ot V(l',r')$. By Proposition \ref{3.1}(1),
$V_1\cong\oplus_{i=0}^{l'-1}V(l+l'-1-2i, r+r'+i)$. Hence
soc$(V)={\rm soc}({\rm Ker}(\phi))\oplus{\rm soc}(P)\supseteq{\rm soc}(V_1)\oplus {\rm soc}(P)
\cong(\oplus_{i=0}^{l'-1}V(l+l'-1-2i, r+r'+i))
\oplus(\oplus_{i=c(l+l'-1)}^{l-1}2V(n+l+l'-1-2i, r+r'+i))=:U$.
Thus, $P(U)$ is isomorphic to a submodule of $V$.
Then a straightforward computation shows that dim$P(U)=2nl={\rm dim}(V)$, and so
$$\begin{array}{rl}
V(l, r)\ot P(l', r')\cong P(U)
\cong&(\oplus_{i=0}^{l'-1}P(l+l'-1-2i, r+r'+i))\\
&\oplus(\oplus_{i=c(l+l'-1)}^{l-1} 2P(n+l+l'-1-2i, r+r'+i)).\\
\end{array}$$
This completes the proof.
\end{proof}

\begin{corollary}\label{3.4}
Let $1\leqslant l\leqslant n$ and $r, r'\in{\mathbb Z}_n$. Then
$V(1, r)\ot P(l, r')\cong P(l, r+r').$
\end{corollary}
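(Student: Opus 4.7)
The plan is to split into two cases according to whether $l=n$ or $l<n$, and in each case to specialize an earlier result.

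For $l=n$, I would observe that $P(n, r')=V(n, r')$ is the simple projective module, so the statement reduces to $V(1, r)\ot V(n, r')\cong V(n, r+r')$. This is precisely the ``In particular'' clause of Proposition \ref{3.1}(1): with $l=1$ and $l'=n$ one has $l+l'=n+1$, and the decomposition contains only the summand indexed by $i=0$.

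For $1\<l<n$, I would apply Theorem \ref{3.3} with parameters $(l,l')\mapsto(1,l)$. All hypotheses hold, and $l_1=\min\{1,l\}=1$, so the first summation in the theorem collapses to the single term
$$\oplus_{i=0}^{0}P(1+l-1-2i,\, r+r'+i)=P(l, r+r'),$$
while the second summation is indexed by $c(l)\<i\<0$. Since $c(l)=[(l+1)/2]\>1$ for every $l\>1$, this index range is empty, and by the convention recorded just before Proposition \ref{3.1} the corresponding term disappears. Combining these two observations gives $V(1,r)\ot P(l,r')\cong P(l, r+r')$.

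There is no substantive obstacle here; the verification to be carried out is simply that, for the specialization $(l,l')=(1,l)$ in Theorem \ref{3.3}, the index set of the first sum contains exactly one element while that of the second sum is empty. In other words, the corollary records the smallest non-trivial instance of Theorem \ref{3.3}, together with the already-handled simple-by-simple case from Proposition \ref{3.1}(1).
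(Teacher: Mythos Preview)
Your proof is correct and follows exactly the same route as the paper: the paper's proof simply states that the result follows from Theorem \ref{3.3} for $1\leqslant l<n$ and from Proposition \ref{3.1}(1) for $l=n$, and you have spelled out precisely these two specializations.
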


\begin{proof}
It is follows from Theorem \ref{3.3} for $1\leqslant l< n$, and Proposition {3.1}(1) for $l=n$.
\end{proof}

\begin{theorem}\label{3.5}
Let $1\leqslant l, l'\<n$ with $l'\neq n$ and $r, r'\in{\mathbb Z}_n$.
Assume that $t=l+l'-(n+1)\>0$ and let $l_1={\rm min}\{l, l'\}$. Then\\
$$\begin{array}{rl}
V(l, r)\ot P(l', r')
\cong&(\oplus_{i=c(t)}^t2P(l+l'-1-2i, r+r'+i))\\
&\oplus(\oplus_{t+1\<i\<l_1-1}P(l+l'-1-2i, r+r'+i))\\
&\oplus(\oplus_{c(l+l'-1)\<i\<l-1}2P(n+l+l'-1-2i, r+r'+i)).\\
\end{array}$$
\end{theorem}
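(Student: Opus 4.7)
The plan is to extend the argument of Theorem~\ref{3.3} to the regime $t = l+l'-(n+1)\geq 0$, where the submodule $V_1 := V(l,r)\otimes V(l',r')$ of $V := V(l,r)\otimes P(l',r')$ is no longer semisimple. By Corollary~\ref{3.2}, $V$ is projective of dimension $2nl$; since $H_n(1,q)$ is symmetric, $V$ is also injective, so $V \cong P({\rm soc}(V))$, and it suffices to exhibit enough direct summands of $V$ and finish with a dimension count.

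Tensoring $V(l,r)$ with the short exact sequences
\begin{equation*}
0 \to V(l',r') \to P(l',r') \to \Omega^{-1}V(l',r') \to 0, \qquad 0 \to \Omega V(l',r') \to P(l',r') \to V(l',r') \to 0
\end{equation*}
gives $0 \to V_1 \to V \to V_2 \to 0$ and $0 \to V_3 \to V \to V_1 \to 0$, where $V_2 = V(l,r)\otimes\Omega^{-1}V(l',r')$ and $V_3 = V(l,r)\otimes{\rm rad}(P(l',r'))$. By Proposition~\ref{3.1}(2), $V_1 \cong Q_1 \oplus S_1$ with $Q_1 = \bigoplus_{i=c(t)}^t P(l+l'-1-2i, r+r'+i)$ and $S_1 = \bigoplus_{t+1 \leq i \leq l_1-1} V(l+l'-1-2i, r+r'+i)$.

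The embedding $V_1 \hookrightarrow V$ presents $Q_1$ as an injective submodule of $V$, which splits off as a direct summand, while the semisimple part $S_1\subseteq V$ extends (by injectivity of $V$) to its injective envelope $P(S_1) = \bigoplus_{t+1 \leq i \leq l_1-1} P(l+l'-1-2i, r+r'+i)$, also splitting off. From the second sequence, composing $V \twoheadrightarrow V_1$ with the projection $V_1 \twoheadrightarrow Q_1$ produces an epimorphism $V \twoheadrightarrow Q_1$ that splits by projectivity, yielding a second copy of $Q_1$ in $V$; since $V(l',r') = {\rm soc}(P(l',r')) \subseteq {\rm rad}(P(l',r'))$, one has $V_1 \subseteq V_3 = \ker(V \twoheadrightarrow V_1)$, so the injective copy of $Q_1$ (lying in $V_1\subseteq V_3$) and the projective copy (mapping isomorphically onto $Q_1 \subseteq V_1$) intersect trivially, and together they assemble into $2Q_1$. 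Finally, using ${\rm soc}(\Omega^{-1}V(l',r')) \cong 2V(n-l',r'+l')$, the submodule $2V(l,r) \otimes V(n-l',r'+l')$ of $V_2$ has, when $l > l'$, projective part $\bigoplus_{i=c(l+l'-1)}^{l-1} 2P(n+l+l'-1-2i, r+r'+i)$ by Proposition~\ref{3.1}(2) (applied after swapping the two simple factors); this injective summand splits off $V_2$ and lifts to a summand of $V$ via $V \twoheadrightarrow V_2$. When $l \leq l'$, Proposition~\ref{3.1}(1) shows the relevant tensor product is already semisimple, contributing no projective summand and matching the empty third sum under the stated convention.

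A direct check shows that the indecomposable projectives appearing in $Q_1$, $P(S_1)$, and the third family carry pairwise distinct labels $(l'',r'')$ (the ranges of $i$ for the first two are disjoint, and any label from the third family differs from those in the first two in either the first or the second coordinate), so the three contributions assemble into an unambiguous direct summand of $V$. Using $\dim P(l'',r'') = 2n$ for $l'' < n$ and $\dim P(n,r'') = n$, one verifies that the candidate summand has total dimension $2nl = \dim V$, forcing equality and yielding the stated decomposition. The main obstacle is the doubling of $Q_1$: producing two genuinely independent copies, one from the socle and one from the top of $P(l',r')$. This is resolved cleanly by the inclusion $V_1 \subseteq V_3$, which places the injective copy of $Q_1$ inside the kernel of $V \twoheadrightarrow V_1$ while the projective copy maps isomorphically onto $V_1$, with the dimension count ruling out any further overlap.
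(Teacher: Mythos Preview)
Your proof is correct and follows essentially the same approach as the paper: both arguments exploit the exact sequences arising from $0\to\Omega V(l',r')\to P(l',r')\to V(l',r')\to 0$ and its dual, invoke Proposition~\ref{3.1}(2) to extract projective summands, and finish with a dimension count against $\dim V=2nl$. Your organization is mildly different---you produce the doubled copy of $Q_1$ by pairing the injective summand $Q_1\subseteq V_1\subseteq V_3$ with a projective section of $V\twoheadrightarrow V_1\twoheadrightarrow Q_1$ and then combine the three families via distinct labels and Krull--Schmidt, whereas the paper works at the level of ${\rm soc}(V)$ and treats the cases $l\leqslant l'$ and $l'<l$ separately---but the underlying mechanism is the same.
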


\begin{proof}
By Proposition \ref{3.1}(1) and Corollary \ref{3.4}, we only need to consider
the case of $r=r'=0$.

First assume that $l\<l'$. Then $t<l-1$ by $l'<n$. We have an exact sequence
$$0\ra V(l, 0)\ot\O V(l',0)\ra V(l, 0)\ot P(l', 0)\ra V(l, 0)\ot V(l', 0)\ra 0.$$
By Proposition \ref{3.1}(2), $\oplus_{i=c(t)}^tP(l+l'-1-2i, i)$ is isomorphic to
a summand of $V(l,0)\ot V(l',0)$. Hence there is a module epimorphism
$$\phi: V:=V(l, 0)\ot P(l', 0)\ra \oplus_{i=c(t)}^tP(l+l'-1-2i, i)$$
such that ${\rm Ker}(\phi)$ contains a submodule isomorphic to $V(l, 0)\ot\O V(l',0)$.
Note that $V(l, 0)\ot\O V(l',0)\supseteq V(l, 0)\ot{\rm soc}(\O V(l',0))\cong V(l, 0)\ot V(l',0)$.
Thus, again by Proposition \ref{3.1}(2), an argument similar to Theorem \ref{3.3}
shows that soc$(V)$ contains a submodule $U$ with
$$\begin{array}{rl}
U\cong&{\rm soc}(V(l, 0)\ot V(l',0))\oplus(\oplus_{i=c(t)}^tV(l+l'-1-2i, i))\\
\cong&(\oplus_{i=c(t)}^t2V(l+l'-1-2i, i))\oplus(\oplus_{i=t+1}^{l-1}V(l+l'-1-2i, i)).\\
\end{array}$$
Thus, $P(U)$ is isomorphic to a submodule of $V$.
Then a straightforward computation shows that dim$P(U)=2nl={\rm dim}(V)$.
It follows that
$$V\cong P(U)\cong(\oplus_{i=c(t)}^t2P(l+l'-1-2i, i))
\oplus(\oplus_{i=t+1}^{l-1}P(l+l'-1-2i, i)).$$

Now assume that $l'<l$. We have two exact sequences
$$\begin{array}{c}
0\ra V(l,0)\ot V(l',0)\ra V(l,0)\ot P(l',0)\ra V(l,0)\ot\O^{-1}V(l',0)\ra 0,\\
0\ra 2V(l,0)\ot V(n-l',l')\ra V(l,0)\ot\O^{-1}V(l',0)\ra V(l,0)\ot V(l',0)\ra 0.\\
\end{array}$$
Note that $n-l'<l$ and $l+n-l'-(n+1)=l-l'-1\>0$. By Proposition \ref{3.1}(2),
$2V(l,0)\ot V(n-l',l')$ contains a summand isomorphic to
$$\oplus_{i=c(l-l'-1)}^{l-l'-1}2P(l+n-l'-1-2i, l'+i)\cong \oplus_{i=c(l+l'-1)}^{l-1}2P(n+l+l'-1-2i, i),$$
and $V(l,0)\ot V(l',0)$ contains a summand isomorphic to
$\oplus_{i=c(t)}^tP(l+l'-1-2i, i).$
It follows from the last exact sequence that $V(l,0)\ot\O^{-1}V(l',0)$ contains a projective summand $P$ with
$$P\cong(\oplus_{i=c(t)}^tP(l+l'-1-2i, i))\oplus(\oplus_{i=c(l+l'-1)}^{l-1}2P(n+l+l'-1-2i, i)).$$
Then from the former exact sequence above, an argument similar to Theorem \ref{3.3} shows that
soc$(V(l,0)\ot P(l',0))$ contains a submodule $U$ with $U\cong{\rm soc}(V(l,0)\ot V(l',0))\oplus{\rm soc}(P)$.
By Proposition \ref{3.1}(2), we have
$$\begin{array}{rl}
U\cong&(\oplus_{i=c(t)}^{l'-1}V(l+l'-1-2i, i))\oplus{\rm soc}(P)\\
\cong&(\oplus_{i=c(t)}^t2V(l+l'-1-2i, i))\oplus(\oplus_{t+1\<i\<l'-1}V(l+l'-1-2i, i))\\
&\oplus(\oplus_{i=c(l+l'-1)}^{l-1}2V(n+l+l'-1-2i, i)).\\
\end{array}$$
Then one can check that dim$P(U)={\rm dim}(V(l,0)\ot P(l',0))$, and so
$$\begin{array}{rl}
V(l,0)\ot P(l',0)\cong P(U)
\cong&(\oplus_{i=c(t)}^t2P(l+l'-1-2i, i))\\
&\oplus(\oplus_{t+1\<i\<l'-1}P(l+l'-1-2i, i))\\
&\oplus(\oplus_{i=c(l+l'-1)}^{l-1}2P(n+l+l'-1-2i, i)).\\
\end{array}$$
This completes the proof.
\end{proof}

\subsection{Tensor product of a simple module with a string module}\selabel{3.3}
~~

In this subsection, we determine the tensor product $V(l, r)\ot \O^{\pm m}V(l', r')$
of a simple module with a string module. By \cite[p.438]{EGST}, we have
$$V(l, r)\ot \O^{\pm m}V(l', r')\cong\O^{\pm m}(V(l,r)\ot V(l',r'))\oplus P$$
for some projective module $P$. Moreover, the first summand on the right side of the above isomorphism
can be easily determined by Proposition \ref{3.1}. But, the projective summand $P$ is not given there.
We will use the decomposition of the tensor products of
$V(l,r)$ with some composition factors of $\O^{\pm m}V(l', r')$ to find some
projective summands of $V(l, r)\ot \O^{\pm m}V(l', r')$, and then compare the dimensions
of these modules to determine the projective module $P$.
Note that $\O^{\pm m}P=0$ for any $m>0$ and projective module $P$.

\begin{proposition}\label{3.6}
Let $1\leqslant l, l'<n$ and $r, r'\in{\mathbb Z}_n$.
Assume that $l+l'\leqslant n$. Let $l_1={\rm min}\{l,l'\}$. Then for all $m\geqslant 0$, we have
$$\begin{array}{rl}
&V(l,r)\ot \O^{\pm m}V(l',r')\\
\cong&(\oplus_{i=0}^{l_1-1}\O^{\pm m}V(l+l'-1-2i, r+r'+i))\\
&\oplus(\oplus_{c(l+l'-1)\<i\<l-1}(m+\frac{1-(-1)^m}{2})P(n+l+l'-1-2i, r+r'+i)).\\
\end{array}$$
In particular, $V(1,r)\otimes \O^{\pm m}V(l',r')\cong \O^{\pm m}V(l',r+r')$.
\end{proposition}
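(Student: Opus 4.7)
The plan is induction on $m\>0$. The base case $m=0$ is immediate from Proposition~\ref{3.1}(1): by commutativity $V(l,r)\ot V(l',r')\cong V(l',r')\ot V(l,r)$ we may assume $l\<l'$, yielding $\bigoplus_{i=0}^{l-1}V(l+l'-1-2i,r+r'+i)$, and for $m=0$ the coefficient $m+(1-(-1)^m)/2=0$ so the second sum is empty; the same formula works for $l>l'$ with $l_1=l'$.

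For the inductive step from $m-1$ to $m$ in the $\O^m$ case, I would tensor with $V(l,r)$ the canonical short exact sequence
\begin{equation*}
0\ra\O^m V(l',r')\ra Q_{m-1}\ra\O^{m-1}V(l',r')\ra 0,
\end{equation*}
where $Q_{m-1}$ is the projective cover of $\O^{m-1}V(l',r')$. From the description of $\O^{m-1}V(l',r')$ in Section~\ref{2}, $Q_{m-1}$ is $m\,P(n-l',r'+l')$ for $m$ even (i.e.\ $m-1$ odd) and $m\,P(l',r')$ for $m$ odd (with $Q_0=P(l',r')$), so $V(l,r)\ot Q_{m-1}$ is a projective module decomposed explicitly by Theorem~\ref{3.3} (since $l+l'\<n$), and by Theorem~\ref{3.5} in the subcase $l>l'$ where $l+(n-l')>n$. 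By the inductive hypothesis, $V(l,r)\ot\O^{m-1}V(l',r')\cong N\oplus P^{(m-1)}$, where $N:=\bigoplus_{i=0}^{l_1-1}\O^{m-1}V(l+l'-1-2i,r+r'+i)$ has each label $l+l'-1-2i\in\{1,\ldots,n-1\}$ (hence no projective summand) and $P^{(m-1)}$ is the prescribed projective part. The projective cover of this right-hand term is therefore $P(N)\oplus P^{(m-1)}$, and the standard splitting of a short exact sequence with projective middle produces a projective $R$ with
\begin{equation*}
V(l,r)\ot Q_{m-1}\cong P(N)\oplus P^{(m-1)}\oplus R,\qquad V(l,r)\ot\O^m V(l',r')\cong\O(N)\oplus R,
\end{equation*}
and $\O(N)\cong\bigoplus_{i=0}^{l_1-1}\O^m V(l+l'-1-2i,r+r'+i)$ already matches the first sum in the target formula.

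The main obstacle is identifying $R$ with the prescribed projective summand $\bigoplus_{c(l+l'-1)\<i\<l-1}(m+\frac{1-(-1)^m}{2})P(n+l+l'-1-2i,r+r'+i)$. When $l\<l'$ one has $l_1=l$ and $l\<n-l'$, so the ``extra'' projective sum of Theorem~\ref{3.3} is empty, the range $c(l+l'-1)\<i\<l-1$ is also empty, hence $P^{(m-1)}=0$ and a direct reindexing $i\mapsto l-1-i$ shows $V(l,r)\ot Q_{m-1}\cong P(N)$, forcing $R=0$. When $l>l'$ (so $l_1=l'$), Theorem~\ref{3.5} applied to $V(l,r)\ot P(n-l',r'+l')$ produces an additional sum of doubled projectives $2P(l+n-l'-1-2i,\cdot)$ indexed by $c(l-l'-1)\<i\<l-l'-1$, and the identity $l'+c(l-l'-1)=c(l+l'-1)$ (which holds in both parities of $l-l'$) aligns their labels with those of $P^{(m-1)}$; combining the multiplicity contributed by the factor $m$ in $Q_{m-1}$ with $P^{(m-1)}$ produces $R$ with the required multiplicity $m+(1-(-1)^m)/2$, which a total-dimension comparison $l\cdot\dim\O^m V(l',r')=\dim\O(N)+\dim R$ then confirms. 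The $\O^{-m}$ case is handled symmetrically by tensoring the injective envelope sequence $0\ra\O^{-(m-1)}V(l',r')\ra I_{m-1}\ra\O^{-m}V(l',r')\ra 0$, where $I_{m-1}$ is projective since $H_n(1,q)$ is symmetric. Finally, the ``in particular'' statement is the case $l=1$: then $l_1=1$ collapses the first sum to a single term $\O^{\pm m}V(l',r+r')$, while $c(l')\>1>0=l-1$ forces the projective sum to be empty.
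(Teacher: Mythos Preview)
Your approach is correct but genuinely different from the paper's. The paper does \emph{not} induct on $m$; instead it invokes directly the general isomorphism from \cite{EGST}
\[
V(l,r)\ot\O^{\pm m}V(l',r')\;\cong\;\O^{\pm m}\bigl(V(l,r)\ot V(l',r')\bigr)\oplus P
\]
for some projective $P$, so that the non-projective summand $\oplus_{i=0}^{l_1-1}\O^{\pm m}V(l+l'-1-2i,r+r'+i)$ is obtained in one stroke from Proposition~\ref{3.1}(1). For $l\<l'$ a dimension count gives $P=0$. For $l'<l$ the paper looks at the head/socle sequence of $\O^{\pm m}V(l',r')$ and applies Proposition~\ref{3.1}(2) to $(m+1)V(l,r)\ot V(n-l',r'+l')$ (where $t=l-l'-1\>0$) to exhibit enough projective summands inside $V(l,r)\ot\O^{\pm m}V(l',r')$; a final dimension count shows there is nothing else.

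Your inductive route via the syzygy sequence $0\ra\O^mV(l',r')\ra Q_{m-1}\ra\O^{m-1}V(l',r')\ra 0$ works and is in fact the method the paper itself adopts later (see Lemma~\ref{5.1} and Proposition~\ref{5.2}). Its advantage is that it is self-contained: you never need the external fact from \cite{EGST} that tensoring with a simple commutes with syzygies up to projectives. The cost is heavier bookkeeping, since you must feed Theorems~\ref{3.3} and~\ref{3.5} (tensor of a simple with an indecomposable projective) into the induction and track both parities of $m$ and both cases $l\lessgtr l'$. Your reindexing identity $l'+c(l-l'-1)=c(l+l'-1)$ is correct, and the multiplicity computation $R=(2m-(m-1))(\cdots)=(m+1)(\cdots)$ for $m$ odd (respectively $R=(2m-m)(\cdots)=m(\cdots)$ for $m$ even) does give the claimed coefficient $m+\tfrac{1-(-1)^m}{2}$, so the dimension check you mention at the end is a confirmation rather than an essential step. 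The paper's argument is shorter here because it offloads the structural work to \cite{EGST}; yours trades that citation for the already-proved Theorems~\ref{3.3} and~\ref{3.5}.
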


\begin{proof}
As stated above, we have $V(l, r)\ot \O^{\pm m}V(l', r')\cong\O^{\pm m}(V(l,r)\ot V(l',r'))\oplus P$
for some projective module $P$. Then by Proposition \ref{3.1}(1), we have
$$\begin{array}{rl}
\O^{\pm m}(V(l,r)\ot V(l',r'))\cong&\O^{\pm m}(\oplus_{i=0}^{l_1-1}V(l+l'-1-2i, r+r'+i))\\
\cong&\oplus_{i=0}^{l_1-1}\O^{\pm m}V(l+l'-1-2i, r+r'+i).\\
\end{array}$$
Hence
$$V(l, r)\ot \O^{\pm m}V(l', r')\cong(\oplus_{i=0}^{l_1-1}\O^{\pm m}V(l+l'-1-2i, r+r'+i))\oplus P.$$
If $l\<l'$, then a straightforward computation shows that
${\rm dim}(V(l, r)\ot \O^{\pm m}V(l', r'))={\rm dim}(\oplus_{i=0}^{l-1}\O^{\pm m}V(l+l'-1-2i, r+r'+i))$,
which implies ${\rm dim}(P)=0$, and so $P=0$. Thus, the desired decomposition follows for $l\<l'$.

Now suppose that $l'<l$. We may assume that $m$ is odd since the proof is similar
for $m$ being even. Then we have two exact sequences
$$\begin{array}{c}
V(l,r)\ot\O^mV(l',r')\ra(m+1)V(l,r)\ot V(n-l',r'+l')\ra 0,\\
0\ra(m+1)V(l,r)\ot V(n-l',r'+l')\ra V(l,r)\ot \O^{-m}V(l',r').\\
\end{array}$$
Note that $l\<n-l'$ and $l+n-l'-(n+1)=l-l'-1\>0$. By Proposition \ref{3.1}(2),
the projective module $\oplus_{i=c(l-l'-1)}^{l-l'-1}(m+1)P(l+n-l'-1-2i, r+r'+l'+i)$
is a summand of $(m+1)V(l,0)\ot V(n-l',l')$, and so it is a summand of
$V(l,r)\ot \O^{\pm m}V(l',r')$. Then by Krull-Schmidt Theorem, we have
$$\begin{array}{rl}
&V(l, r)\ot \O^{\pm m}V(l', r')\\
\cong&(\oplus_{i=0}^{l'-1}\O^{\pm m}V(l+l'-1-2i, r+r'+i))\\
&\oplus(\oplus_{i=c(l-l'-1)}^{l-l'-1}(m+1)P(l+n-l'-1-2i, r+r'+l'+i))\oplus Q\\
\cong&(\oplus_{i=0}^{l'-1}\O^{\pm m}V(l+l'-1-2i, r+r'+i))\\
&\oplus(\oplus_{i=c(l+l'-1)}^{l-1}(m+1)P(n+l+l'-1-2i, r+r'+i))\oplus Q,\\
\end{array}$$
for some projective module $Q$. By a straightforward computation, one finds that dim$Q=0$,
and so $Q=0$. This completes the proof.
\end{proof}

\begin{proposition}\label{3.7}
Let $1\leqslant l, l'\<n$ with $l'\neq n$ and $r, r'\in{\mathbb Z}_n$.
Assume that $t=l+l'-(n+1)\geqslant 0$. Let $l_1={\rm min}\{l, l'\}$.
Then for all $m\geqslant 1$, we have\\
$$\begin{array}{rl}
&V(l,r)\ot \O^{\pm m}V(l',r')\\
\cong&(\oplus_{t+1\<i\<l_1-1}\O^{\pm m}V(l+l'-1-2i, r+r'+i))\\
&\oplus(\oplus_{i=c(t)}^t(m+\frac{1+(-1)^m}{2})P(l+l'-1-2i, r+r'+i))\\
&\oplus(\oplus_{c(l+l'-1)\<i\<l-1}(m+\frac{1-(-1)^m}{2})P(n+l+l'-1-2i, r+r'+i)).\\
\end{array}$$
\end{proposition}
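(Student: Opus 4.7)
My plan is to follow the template of the proof of Proposition~\ref{3.6}: apply the EGST identity
$$V(l,r)\ot\O^{\pm m}V(l',r')\cong\O^{\pm m}(V(l,r)\ot V(l',r'))\oplus P$$
for some projective module $P$, extract the non-projective backbone directly from Proposition~\ref{3.1}(2), independently exhibit enough projective direct summands of $V(l,r)\ot\O^{\pm m}V(l',r')$ coming from composition factors of $\O^{\pm m}V(l',r')$, and then close with a dimension count that forces the residual projective term to vanish by Krull--Schmidt.

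Since $\O^{\pm m}$ annihilates projective modules, applying it to the decomposition of Proposition~\ref{3.1}(2) (after using $V(l,r)\ot V(l',r')\cong V(l',r')\ot V(l,r)$ to reduce to $l\<l'$ when necessary) kills the block $\oplus_{i=c(t)}^{t}P(l+l'-1-2i,r+r'+i)$ and leaves exactly
$$\O^{\pm m}(V(l,r)\ot V(l',r'))\cong\oplus_{t+1\<i\<l_1-1}\O^{\pm m}V(l+l'-1-2i,r+r'+i),$$
matching the first line of the claimed decomposition.

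To exhibit the projective summands, I would tensor $V(l,r)$ with the short exact sequences identifying the socle and top of $\O^{\pm m}V(l',r')$ as multiples of $V(l',r')$ and $V(n-l',r'+l')$. Summing socle and top, the total multiplicity of $V(l',r')$ as a composition factor of $\O^{\pm m}V(l',r')$ is $m+\frac{1+(-1)^m}{2}$, and that of $V(n-l',r'+l')$ is $m+\frac{1-(-1)^m}{2}$. Proposition~\ref{3.1}(2) applied to $V(l,r)\ot V(l',r')$, which contains projective summands since $l+l'\>n+1$, yields the family $P(l+l'-1-2i,r+r'+i)$ for $c(t)\<i\<t$; if moreover $l'<l$, Proposition~\ref{3.1}(2) applied to $V(l,r)\ot V(n-l',r'+l')$, with $l+(n-l')-(n+1)=l-l'-1\>0$, yields after the reindexing $j\mapsto i=j+l'$ the family $P(n+l+l'-1-2i,r+r'+i)$ for $c(l+l'-1)\<i\<l-1$. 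Each such projective, being also injective, splits off as a direct summand of $V(l,r)\ot\O^{\pm m}V(l',r')$ with multiplicity equal to that of the corresponding composition factor in $\O^{\pm m}V(l',r')$. A dimension count --- using $\dim\O^{m}V(l',r')=ml'+(m+1)(n-l')$ for odd $m$ and $m(n-l')+(m+1)l'$ for even $m$ --- then forces any further projective summand of the ambient module to vanish.

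The main obstacle is the combinatorial bookkeeping across the four subcases determined by the sign of the exponent and the parity of $m$: picking the correct short exact sequence so that the relevant simples appear in the socle or top position that lets the projective summand split off after tensoring, verifying that the reindexing $j\mapsto j+l'$ carries $c(l-l'-1)\<j\<l-l'-1$ onto $c(l+l'-1)\<i\<l-1$, and confirming that the unified multiplicity formulas $m+\frac{1\pm(-1)^m}{2}$ correctly record the global multiplicities in every subcase. Once these indices are pinned down, the injectivity of projectives together with Krull--Schmidt and the dimension check conclude the argument mechanically, exactly as in Proposition~\ref{3.6}.
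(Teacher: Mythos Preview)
Your proposal is correct and follows essentially the same approach as the paper's own proof: reduce the non-projective part via the EGST identity and Proposition~\ref{3.1}(2), tensor $V(l,r)$ with the socle/top exact sequence of $\O^{\pm m}V(l',r')$ to extract projective summands from $V(l,r)\ot V(l',r')$ and (when $l'<l$) from $V(l,r)\ot V(n-l',r'+l')$, then close with Krull--Schmidt and a dimension count showing the leftover projective $Q$ vanishes. The paper writes out the two exact sequences for odd $m$ explicitly and leaves the even case as similar, exactly as you indicate; your reindexing $j\mapsto j+l'$ and the multiplicity formulas $m+\tfrac{1\pm(-1)^m}{2}$ match the paper's bookkeeping.
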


\begin{proof}
By Proposition \ref{3.1}(1), Corollary \ref{3.4} and Proposition \ref{3.6},
we only need to consider the case of $r=r'=0$. Now by Proposition \ref{3.1}(2), we have
$$\begin{array}{rl}
V(l, 0)\ot \O^{\pm m}V(l', 0)\cong&\O^{\pm m}(V(l,0)\ot V(l',0))\oplus P\\
\cong&(\oplus_{t+1\<i\<l_1-1}\O^{\pm m}V(l+l'-1-2i, i))\oplus P\\
\end{array}$$
for some projective module $P$.
We assume that $m$ is odd since the proof is similar for $m$ being even.
Then we have two exact sequences
$$\begin{array}{rl}
0\ra mV(l,0)\ot V(l',0)&\ra V(l,0)\ot\O^mV(l',0)\\
&\ra(m+1)V(l,0)\ot V(n-l',l')\ra 0,\\
0\ra(m+1)V(l,0)\ot V(n-l',l')&\ra V(l,0)\ot \O^{-m}V(l',0)\\
&\ra mV(l,0)\ot V(l',0)\ra 0.\\
\end{array}$$
By Proposition \ref{3.1}(2), the projective module
$\oplus_{i=c(t)}^tmP(l+l'-1-2i, i)$ is isomorphic to a summand
of $mV(l,0)\ot V(l',0)$. If $l'<l$, then $l+n-l'-(n+1)=l-l'-1\>0$ and $n-l'<l$
by $l+l'\>n+1$. Again by Proposition \ref{3.1}(2), $(m+1)V(l,0)\ot V(n-l',l')$ contains a summand isomorphic to
$$\oplus_{i=c(l-l'-1)}^{l-l'-1}(m+1)P(l+n-l'-1-2i, l'+i)\cong\oplus_{i=c(l+l'-1)}^{l-1}(m+1)P(n+l+l'-1-2i, i)$$
in this case. Thus, $V(l,0)\ot\O^{\pm m}V(l',0)$ contains a summand isomorphic to
$$(\oplus_{i=c(t)}^tmP(l+l'-1-2i, i))\oplus(\oplus_{c(l+l'-1)\<i\<l-1}(m+1)P(n+l+l'-1-2i, i))$$
in any case. Then it follows from Krull-Schmidt Theorem that
$$\begin{array}{rl}
&V(l,0)\ot \O^{\pm m}V(l',0)\\
\cong&(\oplus_{t+1\<i\<l_1-1}\O^{\pm m}V(l+l'-1-2i, i))\\
&\oplus(\oplus_{i=c(t)}^tmP(l+l'-1-2i, r+r'+i))\\
&\oplus(\oplus_{c(l+l'-1)\<i\<l-1}(m+1)P(n+l+l'-1-2i, i))\oplus Q\\
\end{array}$$
for some projective module $Q$. Then by a tedious but standard computation, one gets that dim$Q=0$,
and so $Q=0$. This completes the proof.
\end{proof}

\subsection{Tensor product of a simple module with a band module}\selabel{3.4}
~~

In this subsection, we investigate the tensor product $M=V(l,r)\ot M_s(l',r',\eta)$ of a simple module
with a band module. Erdmann, Green, Snashall and Taillefer in \cite{EGST} showed that
any non-projective indecomposable summand of $M$ is a band module.
They described the module on an example with $s=1$ for the special case $n=d=6$,
but the decomposition for general case is not given there. By tensoring with $V(2,0)$,
we will determine $M$ by the induction on $l$.
For $l=1$ and $l=2$, we determine $M$ by using some standard basis and the duality $(-)^*$.
For the induction step, we use the following isomorphism (see Proposition \ref{3.1})
$$V(2,0)\ot V(l,r)\ot M_s(l',r',\eta)\cong V(l+1,r)\ot M_s(l',r',\eta)\oplus V(l-1,r+1)\ot M_s(l',r',\eta).$$
If the decompositions of $V(l,r)\ot M_s(l',r',\eta)$ and
$V(l-1,r+1)\ot M_s(l',r',\eta)$ are known, then the decomposition of the module on the left side
is known, which yields the decomposition of $V(l+1,r)\ot M_s(l',r',\eta)$.
Consequently, one gets the decomposition of $M$ for all $1\<l\<n$.

\begin{lemma}\label{3.8}
Let $1\<l<n$, $r, r'\in{\mathbb Z}_n$ and $\eta\in{\mathbb P}^1(k)$.
Then for all $s\>1$, $V(1,r)\ot M_s(l',r',\eta)\cong M_s(l,r+r',\eta)$.
\end{lemma}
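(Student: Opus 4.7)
The main point is that $V(1,r)$ is an invertible object in $\mathcal M$. By Proposition \ref{3.1}(1) we have $V(1,r)\ot V(1,-r)\cong V(1,0)$, and $V(1,0)$ is the trivial module $k$. Consequently the functor $V(1,r)\ot-$ is an autoequivalence of $\mathcal M$; it preserves indecomposability, dimension, Loewy length, and the length of every layer of the socle filtration. In particular, $V(1,r)\ot M_s(l',r',\eta)$ is an indecomposable module of $(s,s)$-type, so by the classification of band modules recalled in \seref{2} it is isomorphic to $M_s(l'',r'',\eta'')$ for uniquely determined $l''$, $r''$ and $\eta''$. The whole problem reduces to identifying this triple.

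First I would dispose of the case $s=1$ by exhibiting an explicit isomorphism on the standard bases. Let $v$ be a basis vector of $V(1,r)$, so that $av=dv=0$, $bv=q^r v$ and $cv=q^{-r}v$; let $\{w_1,\ldots,w_n\}$ be the standard basis of $M_1(l',r',\eta)$ recalled in \seref{2}, and $\{w_1',\ldots,w_n'\}$ the corresponding basis of $M_1(l',r+r',\eta)$. Define $\phi:V(1,r)\ot M_1(l',r',\eta)\ra M_1(l',r+r',\eta)$ by $v\ot w_i\mapsto w_i'$. Using $\t(a)=a\ot b+1\ot a$, $\t(d)=d\ot c+1\ot d$ together with $av=dv=0$, the $a$- and $d$-actions on $v\ot w_i$ reduce to $v\ot aw_i$ and $v\ot dw_i$, matching the actions on $w_i'$; in particular the coefficient $\eta q^{l'}$ appearing in $dw_1$ involves only $\eta$ and $l'$, hence is preserved. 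Using $\t(b)=b\ot b$, $\t(c)=c\ot c$ together with $bv=q^r v$ and $cv=q^{-r}v$, the $b$- and $c$-eigenvalues $q^{r'+l'+i-1}$ and $q^{i-r'}$ on $w_i$ become $q^{(r+r')+l'+i-1}$ and $q^{i-(r+r')}$ on $v\ot w_i$, again matching those on $w_i'$. The same computation works verbatim for $\eta=\oo$, so $\phi$ is an $H_n(1,q)$-module isomorphism.

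For $s\>2$ I would induct on $s$. The structure theorem for band modules recalled in \seref{2} (taking $i=1$) provides a short exact sequence
$$0\ra M_1(l',r',\eta)\hookrightarrow M_s(l',r',\eta)\ra M_{s-1}(l',r',\eta)\ra 0.$$
Tensoring with $V(1,r)$, which is exact, and applying the base case to the submodule and the inductive hypothesis to the quotient yields
$$0\ra M_1(l',r+r',\eta)\ra V(1,r)\ot M_s(l',r',\eta)\ra M_{s-1}(l',r+r',\eta)\ra 0.$$
Hence $V(1,r)\ot M_s(l',r',\eta)$ contains a submodule of $(1,1)$-type isomorphic to $M_1(l',r+r',\eta)$. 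Writing $V(1,r)\ot M_s(l',r',\eta)\cong M_s(l'',r'',\eta'')$ as in the first paragraph, the same structure theorem says that its unique submodule of $(1,1)$-type is $M_1(l'',r'',\eta'')$; since distinct triples $(l,r,\eta)$ give pairwise non-isomorphic $M_1$'s, we conclude $(l'',r'',\eta'')=(l',r+r',\eta)$, completing the induction. The only genuinely computational step is the base case $s=1$, where the bookkeeping with the $\a_i$'s and with the exceptional coefficient $\eta q^{l'}$ in $dw_1$ must be tracked explicitly; everything afterwards is formal, resting only on the invertibility of $V(1,r)$ and on the uniqueness of low-type submodules in the classification of band modules.
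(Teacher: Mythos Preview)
Your argument is correct. The paper itself does not give an independent proof here; it simply records that the argument is ``similar to \cite[Lemma~3.2 and Proposition~3.4]{Ch5}'', i.e.\ the $n=2$ case. Your write-up supplies exactly the kind of details those references contain: an explicit basis computation for $s=1$ showing that the one-dimensional module $V(1,r)$ only shifts the $b$- and $c$-eigenvalues by $q^{\pm r}$ while leaving the $a$- and $d$-actions untouched, followed by an induction on $s$.

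One small remark: your inductive step is a bit more structural than strictly necessary. Once you observe that $V(1,r)\ot-$ is an autoequivalence, you can read off $l''=l'$ and $r''=r+r'$ directly from the socle, since ${\rm soc}(V(1,r)\ot M_s(l',r',\eta))\cong V(1,r)\ot sV(l',r')\cong sV(l',r+r')$ by Proposition~\ref{3.1}(1); only the parameter $\eta''$ genuinely requires the $(1,1)$-type submodule argument (or, equivalently, the $s=1$ computation). Alternatively, the same explicit basis map $v\ot w_{i,j}\mapsto w'_{i,j}$ used for $s=1$ works verbatim for the bases of Lemmas~\ref{3.9}--\ref{3.10}, giving the result for all $s$ in one stroke without induction. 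Either way your proof is complete and in the same spirit as the paper's reference.
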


\begin{proof}
It is similar to \cite[Lemma 3.2 and Proposition 3.4]{Ch5}.
\end{proof}

For a module $M$, let $M_{(r)}=\{m\in M|bm=q^rm\}$, $r\in{\mathbb Z}_n$.
Then it follows from \cite[Lemma 2.1]{Ch2} that
$M=M_{(0)}\oplus M_{(1)}\oplus\cdots\oplus M_{(n-1)}$
as vector spaces and $cM_{(r)}\subseteq M_{(r)}$ for all $r\in{\mathbb Z}_n$.
If $f: M\ra N$ is a module map, then $f(M_{(r)})\subseteq N_{(r)}$ for any $r\in{\mathbb Z}_n$.

\begin{lemma}\label{3.9}
Let $1\<l<n$, $r\in{\mathbb Z}_n$ and $s\>1$. Then there is a basis $\{v_{i,j}|1\<i\<n, 1\<j\<s\}$
in $M_s(l,r,\oo)$ such that
$$\begin{array}{ll}
av_{i,j}=\left\{\begin{array}{ll}
v_{i+1, j-1}, & i=n-l,\\
0, & i=n,\\
v_{i+1,j}, & \mbox{otherwise},\\
\end{array}\right. & bv_{i,j}=q^{r+l+i-1}v_{i,j},\\
\end{array}$$
$$\begin{array}{ll}
dv_{i,j}=\left\{\begin{array}{ll}
v_{n,j}, & i=1,\\
\a_{i-1}(n-l)v_{i-1, j}, & 1<i\<n-l,\\
0, & i=n-l+1,\\
\a_{i-n+l-1}(l)v_{i-1,j}, & n-l+1<i\<n,\\
\end{array}\right. & cv_{i,j}=q^{i-r}v_{i,j},\\
\end{array}$$
where $1\< i\< n$, $1\< j\<s$ and $v_{n-l+1,0}=0$.
\end{lemma}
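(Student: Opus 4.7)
I argue by induction on $s$. For $s=1$, set $v_{i,1}:=v_i$ where $\{v_1,\ldots,v_n\}$ is the standard basis of $M_1(l,r,\infty)$ given in Subsection 2.3; the convention $v_{n-l+1,0}=0$ turns the stated formula $av_{n-l,1}=v_{n-l+1,0}$ into $av_{n-l,1}=0$, which matches the $a$-action listed for $M_1(l,r,\infty)$, and all remaining formulas coincide with those displayed for $s=1$.

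For the inductive step I would use the short exact sequence
$$0\to M_{s-1}(l,r,\infty)\to M_s(l,r,\infty)\to M_1(l,r,\infty)\to 0$$
(the case $i=s-1$ of the last displayed exact sequence in Subsection 2.3). By induction, the submodule $M_{s-1}(l,r,\infty)$ admits a basis $\{v_{i,j}\}_{1\le i\le n,\,1\le j\le s-1}$ satisfying the required formulas. Lift a basis of the quotient $M_1(l,r,\infty)$ to elements $\{v_{i,s}\}_{1\le i\le n}$ of $M_s(l,r,\infty)$, each chosen inside the simultaneous $(b,c)$-weight space of weight $(q^{r+l+i-1},q^{i-r})$; such a lift exists because $b$ and $c$ act semisimply on $M_s(l,r,\infty)$ with the eigenvalues already recorded. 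The stated formulas for $bv_{i,s}$ and $cv_{i,s}$ then hold automatically. Reducing modulo $M_{s-1}(l,r,\infty)$, the $a$- and $d$-actions on the lifts agree with the $M_1(l,r,\infty)$ formulas, so in $M_s(l,r,\infty)$ they differ from the expected expressions by correction terms lying in $M_{s-1}(l,r,\infty)$ in the matching weight spaces.

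The core step is to use the remaining freedom in the lifts (adding to each $v_{i,s}$ an element of $M_{s-1}(l,r,\infty)$ in the same weight space) to absorb these corrections iteratively in $i$. Every correction can be eliminated except the one produced by $av_{n-l,s}$, which, after a suitable rescaling of the $s$-th layer, must take the shape $v_{n-l+1,s-1}$; this surviving correction cannot be made to vanish, since otherwise the exact sequence above would split and contradict the indecomposability of $M_s(l,r,\infty)$. The main technical difficulty is verifying that this normalization is simultaneously compatible with the commutation relation $da-qad=1-bc$, which couples the $a$- and $d$-corrections through the $v_{n-l+1,s-1}$ term; this reduces to a direct identity inside the already-known module $M_{s-1}(l,r,\infty)$ involving the scalars $\alpha_i(l)=(i)_q(1-q^{i-l})$ and $\alpha_i(n-l)$, and follows from the inductive hypothesis together with the defining relations of $H_n(1,q)$.
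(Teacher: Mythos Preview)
Your plan is correct and follows the same route as the paper's proof: induction on $s$ via the extension $0\to M_{s-1}\to M_s\to M_1\to 0$, weight-space lifts of the quotient basis, absorption of correction terms, and indecomposability forcing the one surviving link. The paper streamlines the execution by lifting only two generators $u_1\in M_{(r+l)}$ and $u_{n-l+1}\in M_{(r)}$, adjusting $u_{n-l+1}$ once so that $du_1=a^{l-1}u_{n-l+1}$ holds exactly, and then \emph{defining} all other $u_i$ as $a$-powers of these (making the $a$-action automatic and reducing the $d$-check to the commutation relation); for the normalization it writes $a^{n-l}u_1=\sum_{j}\alpha_j v_{n-l+1,j}$, uses indecomposability of $M_s/M_{s-2}\cong M_2(l,r,\infty)$ to get $\alpha_{s-1}\neq 0$, and sets $v_{i,s}=\alpha_{s-1}^{-1}\bigl(u_i-\sum_{j\le s-2}\alpha_j v_{i,j+1}\bigr)$ --- so a shift by lower layers is needed in addition to the rescaling you mention.
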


\begin{proof}
We prove the lemma by the induction on $s$. For $s=1$, it follows from \seref{2}.
Now let $s\>2$ and $M=M_s(l, r, \oo)$. Then by \cite[Theorem 3.10(2)]{Ch4}, $M$ contains a unique submodule $N$
of $(s-1,s-1)$-type. Moreover, $N\cong M_{s-1}(l, r,\oo)$ and $M/N\cong M_1(l,r,\oo)$.
By the induction hypothesis, $N$ contains a basis $\{v_{i,j}|1\<i\<n, 1\<j\<s-1\}$ as stated in the lemma.
Define a subspace $L$ of $N$ by $L={\rm span}\{v_{i,j}|1\<i\<n, 1\<j\<s-2\}$ for $s>2$,
and $L=0$ for $s=2$. Then $L$ is obviously a submodule of $N$,
and $L\cong M_{s-2}(l,r,\oo)$ for $s>2$ by the induction hypothesis.
It follows from \cite[Theorem 3.10(2)]{Ch4}
that $M/L\cong M_2(l, r, \oo)$. Since $M/N\cong M_1(l,r,\oo)$,
$M/N$ contains a standard basis $\{x_1, x_2, \cdots, x_n\}$ as stated in \seref{2}.
Let $\pi: M\ra M/N$  be the canonical epimorphism. Since $x_1\in(M/N)_{(r+l)}$
and $x_{n-l+1}\in(M/N)_{(r)}$,
$x_1=\pi(u_1)$ and $x_{n-l+1}=\pi(u_{n-l+1})$ for some $u_1\in M_{(r+l)}$ and $u_{n-l+1}\in M_{(r)}$.
Obviously, $u_1\notin N$ and $u_{n-l+1}\notin N$.
By \cite[Lemma 2.2]{Ch2}, we have that $a^{l-1}M_{(r)}\subseteq M_{(r+l-1)}$ and
$dM_{(r+l)}\subseteq M_{(r+l-1)}$. From $dx_1=x_n$, one gets
$\pi(du_1)=\pi(a^{l-1}u_{n-l+1})$.
Hence $du_1-a^{l-1}u_{n-l+1}\in N\cap M_{(r+l-1)}=N_{(r+l-1)}$,
and so $du_1=a^{l-1}u_{n-l+1}+x$ for some $x\in N_{(r+l-1)}$.
By the action of $a$ on the basis of $N$ described above, one can see that
$a^{l-1}N_{(r)}=N_{(r+l-1)}$. Therefore, there is an element $y\in N_{(r)}$
such that $x=a^{l-1}y$, and consequently, $du_1=a^{l-1}(u_{n-l+1}+y)$.
By replacing $u_{n-l+1}$ with $u_{n-l+1}+y$, we may assume that $x=0$, i.e.,
$du_1=a^{l-1}u_{n-l+1}$. From $ax_{n-l}=0$ and $ax_i=x_{i+1}$ for $1\<i<n-l$, one gets
$\pi(a^{n-l}u_1)=a^{n-l}x_1=0$. Hence $a^{n-l}u_1\in N\cap M_{(r)}=N_{(r)}$.

Now let $u_i\in M$, $1\<i\<n$, be defined by $u_i=a^{i-1}u_1$ for $1\<i\<n-l$,
and  $u_i=a^{i-n+l-1}u_{n-l+1}$ for $n-l+1\<i\<n$.
Then $x_i=\pi(u_i)$ for all $1\<i\<n$. By the discussion for $M_s(l,r,\oo)$
in \seref{2}, one knows that
$du_{n-l+1}=0$. Since $\{v_{n-l+1,j}|1\<j\<s-1\}$
is a basis of $N_{(r)}$, we have $a^{n-l}u_1=\sum_{j=1}^{s-1}\a_jv_{n-l+1, j}$ for some
$\a_1, \a_2, \cdots, \a_{s-1}\in k$. If $\a_{s-1}=0$ then
$a^{n-l}u_1\in L$. In this case, $\{\ol{v_{i, s-1}}, \ol{u_i}|1\<i\<n\}$ is a basis
of $M/L$, where $\ol{v}$ denotes the image of $v\in M$ under the canonical epimorphism
$M\ra M/L$. Obviously, ${\rm span}\{\ol{v_{i, s-1}}|1\<i\<n\}$ is a submodules of $M/L$.
By the discussion for $M_s(l,r,\oo)$ in \seref{2} together with $du_1=a^{l-1}u_{n-l+1}$ and $du_{n-l+1}=0$,
it is straightforward to check that ${\rm span}\{\ol{u_i}|1\<i\<n\}$
is also a submodules of $M/L$. Moreover, $M/L={\rm span}\{\ol{v_{i, s-1}}|1\<i\<n\}\oplus {\rm span}\{\ol{u_i}|1\<i\<n\}$.
This is impossible since $M/L\cong M_2(l, r,\oo)$ is indecomposable. Hence $\a_{s-1}\neq 0$.
Now let
$$\begin{array}{c}
v_{i,s}=\a_{s-1}^{-1}(u_i-\sum_{1\<j\<s-2}\a_jv_{i,j+1}), \ 1\<i\<n,\\
\end{array}$$
where we regard $\sum_{1\<j\<s-2}\a_jv_{i,j+1}=0$ for $s=2$.
Then $v_{i,s}\in M_{(r+l+i-1)}\backslash N$. Hence
$\{v_{i,j}|1\<i\<n, 1\<j\<s\}$ is a basis of $M$.
Obviously, $cv_{i,s}=q^{i-r}v_{i,s}$ for all $1\<i\<n$,
$av_{n,s}=0$ and $dv_{n-l+1,s}=0$.
By \cite[Eq.(2.4)]{Ch2} and $au_n=0$, one can check that $du_i=\a_{i-1}(n-l)u_{i-1}$
for $1<i\<n-l$. Then a straightforward verification shows that
$\{v_{i,j}|1\<i\<n, 1\<j\<s\}$ is a desired basis of $M$.
\end{proof}

\begin{lemma}\label{3.10}
Let $1\<l<n$, $r\in{\mathbb Z}_n$, $\eta\in k$ and $s\>1$. Then there is a basis $\{v_{i,j}|1\<i\<n, 1\<j\<s\}$
in $M_s(l,r,\eta)$ such that
\begin{equation*}
\begin{array}{ll}
av_{i,j}=\left\{\begin{array}{ll}
v_{i+1,j} , & 1\<i<n,\\
0 , & i=n,\\
\end{array}\right. &
bv_{i,j}=q^{r+l+i-1}v_{i,j},\\
\end{array}
\end{equation*}
\begin{equation*}
\begin{array}{ll}
dv_{i,j}=\left\{\begin{array}{ll}
v_{n,j-1}+\eta q^lv_{n,j}, & i=1,\\
\a_{i-1}(n-l)v_{i-1,j}, & 1<i\<n-l,\\
0, & i=n-l+1,\\
\a_{i-n+l-1}(l)v_{i-1,j}, & n-l+1<i\< n,\\
\end{array}\right. &cv_{i,j}=q^{i-r}v_{i,j},\\
\end{array}
\end{equation*}
where $1\<i\<n$, $1\<j\<s$ and $v_{n,0}=0$.
\end{lemma}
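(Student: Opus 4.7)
The plan is to induct on $s$, paralleling the proof of Lemma 3.9 but adapted to the connected (rather than broken) $a$-chain of $M_1(l,r,\eta)$ when $\eta\in k$. The base case $s=1$ is the description of $M_1(l,r,\eta)$ in \seref{2}. For $s\geq 2$, set $M := M_s(l,r,\eta)$; by \cite[Theorem 3.10(2)]{Ch4} there is a unique submodule $N$ of $(s{-}1,s{-}1)$-type with $N\cong M_{s-1}(l,r,\eta)$ and $M/N\cong M_1(l,r,\eta)$, and one takes $L := {\rm span}\{v_{i,j}:1\leq j\leq s-2\}\cong M_{s-2}(l,r,\eta)$ (with $L=0$ when $s=2$), so that $M/L\cong M_2(l,r,\eta)$ is indecomposable.

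Let $\{x_1,\ldots,x_n\}$ be the standard basis of $M/N$ and $\pi:M\to M/N$ the canonical projection. Since $b, c$ commute and both are diagonalizable, one may choose a lift $u_1\in M$ of $x_1$ with $bu_1=q^{r+l}u_1$ and $cu_1=q^{1-r}u_1$. Set $u_i:=a^{i-1}u_1$; the relations $ba=qab$, $ca=qac$ and $a^n=0$ force $bu_i=q^{r+l+i-1}u_i$, $cu_i=q^{i-r}u_i$, and $au_n=0$. Since $dx_1=\eta q^l x_n=\eta q^l a^{n-1}x_1$ in $M/N$, the element $z := du_1-\eta q^l a^{n-1}u_1$ lies in $N_{(r+l-1)}={\rm span}\{v_{n,j}:1\leq j\leq s-1\}$, so $z=\sum_{j=1}^{s-1}\beta_j v_{n,j}$. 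The inductive formula yields $dv_{1,j}-\eta q^l a^{n-1}v_{1,j}=v_{n,j-1}$ for $2\leq j\leq s-1$, so replacing $u_1$ by $u_1-\sum_{j=1}^{s-2}\beta_j v_{1,j+1}$ (an element of $N_{(r+l)}$ with the correct $c$-weight, vacuous when $s=2$) preserves $\pi(u_1)=x_1$ and reduces $z$ to $\beta_{s-1}v_{n,s-1}$. If $\beta_{s-1}=0$, then $z\in L$, and in $M/L$ the cyclic submodule ${\rm span}\{\bar u_i:1\leq i\leq n\}$ satisfies $d\bar u_1=\eta q^l\bar u_n$, $a\bar u_n=0$ and the $b,c$-weights above; it is therefore a quotient of $M_1(l,r,\eta)$ of dimension $n$, hence isomorphic to $M_1(l,r,\eta)$. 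Since $\bar u_1\notin N/L$, this yields $M/L=(N/L)\oplus{\rm span}\{\bar u_i\}\cong 2M_1(l,r,\eta)$, contradicting the indecomposability of $M_2(l,r,\eta)$. Hence $\beta_{s-1}\neq 0$, and one sets $v_{i,s}:=\beta_{s-1}^{-1}u_i$.

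The remaining formulas $dv_{i,s}=\a_{i-1}(n-l)v_{i-1,s}$ (resp.\ $\a_{i-n+l-1}(l)v_{i-1,s}$) and $dv_{n-l+1,s}=0$ are then verified by iterated application of the algebra relation $da=qad+1-bc$ to $dv_{i,s}=da^{i-1}v_{1,s}$, using $a^n=0$ and the $b,c$-weights of the $u_i$; this precisely reproduces the $d$-action on a standard basis of $M_1(l,r,\eta)$, as in the final paragraph of the proof of Lemma 3.9. The main obstacle is the indecomposability step: one must ascertain that ${\rm span}\{\bar u_i\}$ is genuinely closed under the full $H_n(1,q)$-action (which reduces to the $da$-commutation identity above) and forms a cyclic module with precisely the defining relations of $M_1(l,r,\eta)$, after which the dimension-counting contradiction is automatic. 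Everything else—the choice of a simultaneous $b,c$-eigenvector lift, the adjustment killing the lower coefficients $\beta_1,\ldots,\beta_{s-2}$, and the final scaling—is a direct adaptation of the argument in Lemma 3.9.
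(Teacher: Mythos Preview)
Your proof is correct and follows precisely the inductive template of Lemma~\ref{3.9} that the paper invokes, adapted to the unbroken $a$-chain of $M_1(l,r,\eta)$ for $\eta\in k$: you lift only the single generator $x_1$, locate the obstruction in $du_1-\eta q^l u_n$ rather than in $a^{n-l}u_1$, and carry out the same adjustment--indecomposability--rescaling argument. The verification that ${\rm span}\{\bar u_i\}$ is a submodule via the $da$-commutation, which you flag as the main point to check, is exactly what the paper's ``straightforward verification'' in the last paragraph of Lemma~\ref{3.9} amounts to.
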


\begin{proof}
It is similar to Lemma \ref{3.9}.
\end{proof}

\begin{lemma}\label{3.11}
Let $r, r'\in{\mathbb Z}_n$, $\eta\in{\mathbb P}^1(k)$ and $s\>1$. Then
$$V(2, r)\ot M_s(1, r', \eta)\cong M_s(2, r+r',\eta q^{-1}(2)_q)\oplus sV(n, r+r'+1).$$
\end{lemma}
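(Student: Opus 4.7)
The plan is to construct the two summands explicitly using the standard bases of $V(2,r)$ and $M_s(1,r',\eta)$ from Section~2 and Lemma~\ref{3.10}. Since $V(n,r+r'+1)$ is simple, projective and injective, any submodule of $V:=V(2,r)\otimes M_s(1,r',\eta)$ isomorphic to $sV(n,r+r'+1)$ automatically splits off as a direct summand. So the argument reduces to exhibiting a submodule $M\cong M_s(2,r+r',\eta q^{-1}(2)_q)$ and a submodule $N\cong sV(n,r+r'+1)$ with $M\cap N=0$; the dimension count $\dim M+\dim N=2ns=\dim V$ then forces $V=M\oplus N$.

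For the band-module summand, let $\{w_1,w_2\}$ be the standard basis of $V(2,r)$ and $\{v_{i,j}\}$ that of $M_s(1,r',\eta)$ from Lemma~\ref{3.10}; the case $\eta=\infty$ is analogous, using Lemma~\ref{3.9} and the convention $\eta q^{-1}(2)_q=\infty$. My ansatz for the candidate basis of $M$ is
\[
u_{1,j}=(2)_q^{1-j}\bigl(w_1\otimes v_{2,j}+q^{r'}(2)_q\,w_2\otimes v_{1,j}\bigr),\qquad u_{i,j}=a^{i-1}u_{1,j}.
\]
An induction using $a(w_1\otimes v_{i,j})=q^{r'+i}w_2\otimes v_{i,j}+w_1\otimes v_{i+1,j}$ and $a(w_2\otimes v_{i,j})=w_2\otimes v_{i+1,j}$ gives the closed form $u_{i,j}=(2)_q^{1-j}\bigl(w_1\otimes v_{i+1,j}+q^{r'}(i+1)_q\,w_2\otimes v_{i,j}\bigr)$, so in particular $u_{n,j}=q^{r'}(2)_q^{1-j}w_2\otimes v_{n,j}$ lies in $\ker a$ because $(n)_q=0$. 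A routine check against Lemma~\ref{3.10} verifies the $b$- and $c$-weights, $du_{i,j}=\alpha_{i-1}(n-2)u_{i-1,j}$ for $1<i\leq n-2$, $du_{n-1,j}=0$, and $du_{n,j}=(1-q^{-1})u_{n-1,j}$. The critical relation $du_{1,j}=u_{n,j-1}+\eta' q^2u_{n,j}$ emerges from the cancellation of $w_1\otimes v_{1,j}$ terms via $(1-q^2)+(q^2-1)=0$; the rescaling factor $(2)_q^{1-j}$ is precisely what converts the raw coefficient $(2)_q$ sitting in front of $u_{n,j-1}$ into $1$, producing $\eta'=\eta q^{-1}(2)_q$.

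For $N$, I analyse the $b$-weight $q^{r+r'+1}$ subspace of $V$, which has basis $\{w_1\otimes v_{1,j},\,w_2\otimes v_{n,j}\}_{j=1}^s$ of dimension $2s$. Solving $dx=0$ within it yields an $s$-dimensional space of primitive vectors $x^{(1)},\ldots,x^{(s)}$; each $x^{(k)}$ involves $w_1\otimes v_{1,k}$ with corrections in $\{w_2\otimes v_{n,k-1},w_2\otimes v_{n,k}\}$ depending on $\eta$ and $r'$. The orbit $\{a^ix^{(k)}\}_{0\leq i\leq n-1}$ has full length $n$ with $a^nx^{(k)}=0$, so the submodule $N$ generated by these vectors is isomorphic to $sV(n,r+r'+1)$. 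Finally, the composition factors of $M$ are $sV(2,r+r')$ (socle) and $sV(n-2,r+r'+2)$ (top), all distinct from $V(n,r+r'+1)$ for $n>2$; hence $M\cap N=0$ and the decomposition follows.

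The main obstacle is extracting the exact parameter $\eta q^{-1}(2)_q$. The coefficient $q^{r'}(2)_q$ in $u_{1,j}$ is forced so that the resulting sequence $\mu_i=q^{r'}(i+1)_q$ satisfies $\mu_{n-1}=q^{r'}(n)_q=0$, which is precisely what makes $du_{n,j}=(1-q^{-1})u_{n-1,j}$ hold without a spurious $w_2\otimes v_{n-1,j}$ term. Once this is in place, the rescaling by $(2)_q^{1-j}$ is needed to absorb the coefficient $(2)_q$ that arises in the raw computation of $du_{1,j}$ from the interaction of the $\eta q\,v_{n,j}$ term in $dv_{1,j}$ with the $(1-q^{-1})q^{1-r'}$ contribution from $dw_2\otimes cv_{1,j}$. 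Any miscount in these coefficients produces a wrong $\eta'$.
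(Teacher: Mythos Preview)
Your construction of the band-module summand $M_s(2,r+r',\eta q^{-1}(2)_q)$ is essentially identical to the paper's: the paper first reduces to $r=r'=0$ via Lemma~\ref{3.8} and then sets $u_{1,j}=((2)_q)^{s-j}(v_1\otimes v_{2,j}+(2)_q\,v_2\otimes v_{1,j})$, which differs from your vector only by the harmless global scalar $(2)_q^{s-1}$ and the factor $q^{r'}$ that you pick up by not normalising $r'$ away. The coefficient computations you describe (the cancellation of the $w_1\otimes v_{1,j}$ term and the emergence of $\eta q(2)_q=\eta' q^2$) are exactly the verification the paper leaves to the reader.

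Where you diverge is in producing the summand $sV(n,r+r'+1)$. The paper does not solve for primitive vectors; instead it notes that $M_s(1,r',\eta)/\mathrm{soc}\cong sV(n-1,r'+1)$, so there is an epimorphism $V\twoheadrightarrow s(V(2,r)\otimes V(n-1,r'+1))$, and by Proposition~\ref{3.1}(2) the projective simple $V(n,r+r'+1)$ splits off that tensor product, hence off $V$. Your direct highest-weight analysis is a perfectly valid alternative and is self-contained, but it costs you the extra work of checking that each $x^{(k)}$ really generates a full $n$-dimensional simple (you assert ``full length $n$'' but should note that the $d$-action closes up on $\mathrm{span}\{a^ix^{(k)}\}$ via $da=qad+(1-bc)$) and that the $s$ copies sum directly (immediate from the linear independence of the $w_1\otimes v_{n,k}$ components of $a^{n-1}x^{(k)}$). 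The paper's quotient argument bypasses both of these bookkeeping points in one line.
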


\begin{proof}
By Proposition \ref{3.1}(1) and Lemma \ref{3.8}, we may assume that $r=r'=0$.
We only consider the case of $\eta\in k$ since the proof is similar for $\eta=\oo$.
Assume $\eta\in k$ and let $M=V(2, 0)\ot M_s(1,0,\eta)$. By the discussion in \seref{2},
there is a standard
basis $\{v_1, v_2\}$ in $V(2, 0)$ such that
$$\begin{array}{l}
av_1=v_2,\\
av_2=0,\\
\end{array} \
\begin{array}{l}
bv_1=v_1,\\
bv_2=qv_2,\\
\end{array}\
\begin{array}{l}
cv_1=q^{-1}v_1,\\
cv_2=v_2,\\
\end{array}\
\begin{array}{l}
dv_1=0,\\
dv_2=\a_1(2)v_1.\\
\end{array}$$
By Lemma \ref{3.10},
there is a standard basis $\{v_{i,j}|1\<i\<n, 1\<j\<s\}$
in $M_s(1,0,\eta)$ such that for all $1\<i\<n$ and $1\<j\<s$,
\begin{equation*}
\begin{array}{ll}
av_{i,j}=\left\{\begin{array}{ll}
v_{i+1,j} , & 1\<i<n,\\
0 , & i=n,\\
\end{array}\right. & bv_{i,j}=q^iv_{i,j},\\
dv_{i,j}=\left\{\begin{array}{ll}
v_{n,j-1}+\eta qv_{n,j}, & i=1,\\
\a_{i-1}(n-1)v_{i-1,j}, & 1<i\<n-1,\\
0, & i=n,\\
\end{array}\right. &cv_{i,j}=q^iv_{i,j},\\
\end{array}
\end{equation*}
where $v_{n,0}=0$. Hence $\{v_1\ot v_{i,j}, v_2\ot v_{i,j}|1\<i\<n, 1\<j\<s\}$
is a basis of $M$.

For any $1\< i\< n$ and $1\<j\<s$, define $u_{i,j}\in M$ by
$u_{1,j}=((2)_q)^{s-j}(v_1\ot v_{2,j}+(2)_qv_2\ot v_{1,j})$
and $u_{i,j}=a^{i-1}u_{1,j}$ for $i>1$. Then by Lemma \ref{3.10}, a straightforward
verification shows that
$N:={\rm span}\{u_{i,j}|1\<i\<n, 1\<j\<s\}$ is a submodule of $M$ and
$N\cong M_s(2,0, \eta q^{-1}(2)_q)$.

Since $M_s(1,0,\eta)/{\rm soc}(M_s(1,0,\eta))\cong sV(n-1,1)$, there is an
epimorphism from $M$ to $s(V(2,0)\ot V(n-1,1))$.
By Proposition \ref{3.1}(2), $V(n, 1)$ is a projective summand of $V(2,0)\ot V(n-1,1)$.
It follows that $M$ contains a submodule $U$ isomorphic to $sV(n,1)$.
Obviously, $N\cap U=0$. Therefore, $M=N\oplus U\cong M_s(2,0,\eta q^{-1}(2)_q)\oplus sV(n,1)$
by ${\rm dim}(N\oplus U)={\rm dim}(M)$.
\end{proof}

\begin{lemma}\label{3.12}
Let $1\leqslant l\leqslant n$ and $r\in{\mathbb Z}_n$.
Then $V(l,r)^*\cong V(l,1-l-r)$ and $P(l,r)^*\cong P(l,1-l-r)$.
If $1\leqslant l<n$, then $(\O^mV(l,r))^*\cong\O^{-m}V(l, 1-l-r)$ and
$(\O^{-m}V(l, r))^*\cong\O^{m}V(l, 1-l-r)$
for all $m\>1$.
\end{lemma}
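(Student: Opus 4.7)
The plan is to prove the three assertions in sequence, each feeding into the next.

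For the simple-module duality, I will write out the dual basis $\{v_i^*\}$ of $V(l,r)^*$ and compute the action of the generators via $(h\cdot f)(m)=f(S(h)\cdot m)$ together with the antipode formulas. A direct weight count shows that $v_i^*$ has $(b,c)$-weight $(q^{-(r+i-1)},q^{r+l-i})$, which matches the weight of the basis vector $w_{l+1-i}$ of $V(l,1-l-r)$. I then define a candidate map $\phi:V(l,1-l-r)\to V(l,r)^*$ by $\phi(w_j)=\l_j v_{l+1-j}^*$ and solve recursively for nonzero scalars $\l_j$ that make the $a$-action equivariant. A short computation using $\a_i(l)=(i)_q(1-q^{i-l})$ will show that the recursion forced on $\l_j$ by the $d$-action coincides with the one forced by $a$, so $\phi$ is a well-defined module isomorphism.

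For the projective identity, I will invoke the fact recalled in \seref{2} that $H_n(1,q)\cong D(H_n(q))$ is symmetric, and hence self-injective, together with the fact that $(-)^*$ is exact contravariant and sends projectives to injectives. Thus $P(l,r)^*$ is projective indecomposable, and its top is the $k$-dual of the socle of $P(l,r)$, which is $V(l,r)^*\cong V(l,1-l-r)$ by the first part. Since projective indecomposables are classified by their tops, this forces $P(l,r)^*\cong P(l,1-l-r)$.

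For the syzygies and cosyzygies, I will apply $(-)^*$ to the minimal projective resolution of $V(l,r)$ displayed in \seref{2},
$$\cdots\ra 4P(n-l,r+l)\ra 3P(l,r)\ra 2P(n-l,r+l)\ra P(l,r)\ra V(l,r)\ra 0.$$
The projective identity just proven together with the congruence $1-(n-l)-(r+l)\equiv 1-r\pmod n$ turns the dualized complex into precisely the minimal injective resolution of $V(l,1-l-r)$ listed in \seref{2}. Since $\O^m V(l,r)$ is the $m$-th syzygy read from the projective resolution and $\O^{-m}V(l,1-l-r)$ is the $m$-th cosyzygy read from the injective resolution, one obtains $(\O^m V(l,r))^*\cong\O^{-m}V(l,1-l-r)$. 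The second formula then follows by applying $(-)^*$ once more and invoking $M^{**}\cong M$.

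The only delicate point I anticipate is verifying the compatibility of the two scalar recursions defining $\l_j$ in the simple-module case; the remaining assertions are then formal consequences of Hopf duality, the Frobenius property of $H_n(1,q)$, and the explicit resolutions in \seref{2}.
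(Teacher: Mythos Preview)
Your proposal is correct. The paper's own proof simply cites \cite{Andrea} for $V(l,r)^*\cong V(l,1-l-r)$ and refers to an argument analogous to \cite[Lemma~3.16]{Ch5} for the remaining statements, so there is no detailed argument to compare against; your plan is the natural way to fill in those references. The scalar-recursion compatibility you flag as the delicate point does hold: equating the recursions coming from the $a$- and $d$-actions reduces to the identity $\a_j(l)/\a_{l-j}(l)=q^{2j-l}$, which is immediate from $\a_i(l)=(i)_q(1-q^{i-l})$. Alternatively, once you have matched the $(b,c)$-weights you could bypass that computation entirely by invoking the classification of simples: $V(l,r)^*$ is simple of dimension $l$, hence isomorphic to $V(l,r'')$ for some $r''\in\mathbb{Z}_n$, and the joint $(b,c)$-weight data already force $r''=1-l-r$. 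Your arguments for $P(l,r)^*$ and for the syzygies are standard and sound; the only point worth making explicit is that the contravariant equivalence $(-)^*$ (with $M^{**}\cong M$) takes projective covers to injective envelopes, so the dual of a \emph{minimal} projective resolution is indeed a \emph{minimal} injective resolution, which is what identifies the images with $\O^{-m}V(l,1-l-r)$.
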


\begin{proof} The first isomorphism is due to \cite[Theorem 4.3]{Andrea},
and the rest follow from an argument similar to \cite[Lemma 3.16]{Ch5}.
\end{proof}

\begin{lemma}\label{3.13}
Let $1\<l<n$, $r\in{\mathbb Z}_n$, $\eta\in{\mathbb P}^1(k)$ and $s\>1$. Then
$$M_s(l,r,\eta)^*\cong M_s(n-l, 1-r, -\eta q^l).$$
\end{lemma}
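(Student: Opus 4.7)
The plan is to first identify $M_s(l,r,\eta)^*$ abstractly among the band modules (determining the first two parameters from socle and top data), then pin down the scalar parameter $\eta'=-\eta q^l$ by a direct basis computation in the base case $s=1$, and finally propagate to general $s$ via a short exact sequence argument.

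Since $(-)^*$ is an exact contravariant self-equivalence of $\mathcal M$ (as recalled in \seref{2.1}), it preserves indecomposability and Loewy length. Combining the description in \seref{2}, which gives ${\rm soc}(M_s(l,r,\eta))\cong sV(l,r)$ and $M_s(l,r,\eta)/{\rm soc}(M_s(l,r,\eta))\cong sV(n-l,r+l)$, with \leref{3.12}, one obtains
$${\rm soc}(M_s(l,r,\eta)^*)\cong sV(n-l,1-r),\quad M_s(l,r,\eta)^*/{\rm soc}(M_s(l,r,\eta)^*)\cong sV(l,1-l-r).$$
Hence $M_s(l,r,\eta)^*$ is an indecomposable of $(s,s)$-type whose socle and top agree with those of a band module of first two parameters $n-l$ and $1-r$. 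By the classification of Loewy length $2$ indecomposables recalled in \seref{2}, an $(s,s)$-type indecomposable is always a band module, so $M_s(l,r,\eta)^*\cong M_s(n-l,1-r,\eta')$ for some $\eta'\in\mathbb P^1(k)$.

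For the base case $s=1$, I would use the standard bases of \leref{3.9} (for $\eta=\oo$) and \leref{3.10} (for $\eta\in k$) and compute the dual action via $(h\cdot f)(m)=f(S(h)m)$ using the explicit antipode from \seref{2}. Setting $w_i:=\lambda_iv_{n+1-i}^*$ with scalars $\lambda_i$ chosen recursively so that $aw_i=w_{i+1}$ for $i<n$, the $b$- and $c$-eigenvalues on $w_i$ then match those of the standard basis of $M_1(n-l,1-r,\eta')$. Reading off the coefficient of $w_n$ in $dw_1$, the contributions of $S(a)=-ab^{-1}$, $S(d)=-dc^{-1}$, and the factor $q^l$ already present in $dv_1=\eta q^lv_n$ should combine to produce exactly $\eta'=-\eta q^l$; the case $\eta=\oo$ is parallel and invokes the convention $\a\oo=\oo$ from \seref{2}.

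For $s\ge 2$, apply $(-)^*$ to the exact sequence $0\ra M_{s-1}(l,r,\eta)\ra M_s(l,r,\eta)\ra M_1(l,r,\eta)\ra 0$ (the instance $i=s-1$ of the last exact sequence in \seref{2}) to obtain
$$0\ra M_1(l,r,\eta)^*\ra M_s(l,r,\eta)^*\ra M_{s-1}(l,r,\eta)^*\ra 0.$$
By the structural step the middle term is $M_s(n-l,1-r,\eta')$, while by the induction hypothesis and the $s=1$ case the outer terms are $M_1(n-l,1-r,-\eta q^l)$ and $M_{s-1}(n-l,1-r,-\eta q^l)$ respectively. Since $M_s(n-l,1-r,\eta')$ has a \emph{unique} $(1,1)$-type submodule and that submodule is isomorphic to $M_1(n-l,1-r,\eta')$ (again by \seref{2}), the embedded copy of $M_1(n-l,1-r,-\eta q^l)$ forces $\eta'=-\eta q^l$. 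The main obstacle will be the scalar bookkeeping in the $s=1$ computation: the antipode injects a sign through $S(a)=-ab^{-1}$ and powers of $q$ through $S(b),S(c),S(d)$, which must be reconciled with the recursive rescaling $\lambda_i$ so that the final coefficient lands on the precise representative $-\eta q^l$ in $\mathbb P^1(k)$ rather than a different normalization.
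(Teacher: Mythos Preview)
Your proposal is correct and follows essentially the same approach as the paper: settle the $s=1$ case by a direct basis computation with the antipode, identify $M_s(l,r,\eta)^*$ as some $M_s(n-l,1-r,\eta')$ via socle/top data and the classification of $(s,s)$-type indecomposables, then pin down $\eta'=-\eta q^l$ using the uniqueness of the $(1,1)$-type submodule. The only cosmetic difference is that the paper does not phrase the last step as an induction on $s$ --- it simply dualizes the epimorphism $M_s(l,r,\eta)\twoheadrightarrow M_1(l,r,\eta)$ to obtain the monomorphism $M_1(n-l,1-r,-\eta q^l)\hookrightarrow M_s(n-l,1-r,\eta')$ and concludes directly --- so your appeal to the induction hypothesis for $M_{s-1}$ is harmless but superfluous.
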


\begin{proof}
At first, by an argument similar to \cite[Theorem 4.3]{Andrea}, one can check that
$M_1(l,r,\eta)^*\cong M_1(n-l, 1-r, -\eta q^l)$ for $\eta=\oo$ and $\eta\in k$, respectively.

Now assume $s>1$. Then $M_s(l,r,\eta)^*$ is indecomposable.
By the structure of $M_s(l,r,\eta)$, we have an exact sequence
$0\ra sV(l,r)\ra M_s(l,r,\eta)\ra sV(n-l, r+l)\ra 0$.
Applying the duality $(-)^*$ to the above exact sequence and using Lemma \ref{3.12}, one gets
another exact sequence
$$0\ra sV(n-l,1-r)\ra M_s(l, r, \eta)^*\ra sV(l, 1-r-l)\ra 0.$$
By the classification of indecomposable modules stated in \seref{2}, one knows that
$M_s(l, r, \eta)^*\cong M_s(n-l, 1-r, \a)$ for some $\a\in{\mathbb P}^1(k)$.
On the other hand, there is an epimorphism $M_s(l, r, \eta)\ra M_1(l, r, \eta)$ by \cite[Theorem 3.10(2)]{Ch4}.
Then by applying the duality $(-)^*$,
one gets a monomorphism $M_1(n-l, 1-r, -\eta q^l)\ra M_s(n-l, 1-r, \a)$. Again by \cite[Theorem 3.10(2)]{Ch4},
$M_s(n-l, 1-r, \a)$ contains a unique submodule of $(1,1)$-type, which is isomorphic to
$M_1(n-l, 1-r, \a)$. Hence $M_1(n-l, 1-r, -\eta q^l)\cong M_1(n-l, 1-r, \a)$, which implies
$\a=-\eta q^l$ by \cite[Theorem 3.10(4)]{Ch4}. It follows that
$M_s(l, r, \eta)^*\cong M_s(n-l, 1-r, -\eta q^l)$.
\end{proof}

\begin{corollary}\label{3.14}
Let $r, r'\in{\mathbb Z}_n$, $\eta\in{\mathbb P}^1(k)$ and $s\>1$. Then\\
$$V(2, r)\ot M_s(n-1, r', \eta)\cong M_s(n-2, r+r'+1,\eta(2)_q)\oplus sV(n, r+r').$$
\end{corollary}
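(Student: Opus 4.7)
The natural plan is to deduce the corollary from Lemma~\ref{3.11} by applying the duality functor $(-)^*$ on $\mathcal{M}$. Concretely, I would start from the isomorphism
$$V(2, r)\ot M_s(1, r', \eta)\cong M_s(2, r+r',\eta q^{-1}(2)_q)\oplus sV(n, r+r'+1)$$
provided by Lemma~\ref{3.11}, apply $(-)^*$ to both sides, and use the compatibility $(M\ot N)^*\cong N^*\ot M^*\cong M^*\ot N^*$ (the second isomorphism coming from the quasitriangularity of $H_n(1,q)$).

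Next I would invoke Lemma~\ref{3.12} and Lemma~\ref{3.13} to evaluate the relevant duals. Lemma~\ref{3.12} gives $V(2,r)^*\cong V(2,-1-r)$ and $V(n,r+r'+1)^*\cong V(n,-n-r-r')=V(n,-r-r')$ in $\mathbb{Z}_n$. Lemma~\ref{3.13} gives $M_s(1,r',\eta)^*\cong M_s(n-1,1-r',-\eta q)$ and $M_s(2,r+r',\eta q^{-1}(2)_q)^*\cong M_s(n-2,1-r-r',-\eta q(2)_q)$. Substituting these into the dualized identity yields
$$V(2,-1-r)\ot M_s(n-1,1-r',-\eta q)\cong M_s(n-2,1-r-r',-\eta q(2)_q)\oplus sV(n,-r-r').$$

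Finally, I would perform the change of parameters $r\mapsto -1-r$, $r'\mapsto 1-r'$, $\eta\mapsto -\eta q^{-1}$, which transforms the above identity into the exact form stated in Corollary~\ref{3.14}. I do not expect any serious obstacle here: once the duality setup is in place, everything reduces to careful bookkeeping of the parameter shifts, and the only thing to be slightly cautious about is that the index $r+r'$ lives in $\mathbb{Z}_n$ so that additive constants like $-n$ can be dropped. Alternatively, one could mimic the direct argument of Lemma~\ref{3.11}: use the basis description of $M_s(n-1,r',\eta)$ analogous to Lemmas~\ref{3.9} and~\ref{3.10}, construct an explicit submodule isomorphic to $M_s(n-2,r+r'+1,\eta(2)_q)$, observe that the simple quotient $V(2,r)\ot V(1,r'+n-1)$ contributes a projective summand $sV(n,r+r')$ by Proposition~\ref{3.1}(2), and then compare dimensions; but the duality route is substantially shorter.
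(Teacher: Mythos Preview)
Your proposal is correct and follows essentially the same approach as the paper: start from Lemma~\ref{3.11}, apply the duality $(-)^*$, and use Lemmas~\ref{3.12} and~\ref{3.13} to identify the duals. The only cosmetic difference is that the paper first reduces to $r=r'=0$ and plugs in specific parameters so that the dualized identity comes out directly as the target case, whereas you keep general parameters and perform the substitution at the end; both routes amount to the same computation.
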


\begin{proof}
It is enough to show the corollary for $r=r'=0$.
By Lemma \ref{3.11}, we have an isomorphism
$V(2, -1)\ot M_s(1, 1, -\eta q^{-1})\cong M_s(2, 0,-\eta q^{-2}(2)_q)\oplus sV(n, 1)$.
Then by applying the duality $(-)^*$ to the isomorphism, it follows from Lemmas \ref{3.12}
and \ref{3.13} that
$V(2, 0)\ot M_s(n-1, 0, \eta)\cong M_s(n-2, 1,\eta(2)_q)\oplus sV(n, 0).$
\end{proof}

\begin{lemma}\label{3.15}
Let $1<l'<n-1$, $r, r'\in{\mathbb Z}_n$, $\eta\in{\mathbb P}^1(k)$ and $s\>1$. Then
$$\begin{array}{rl}
&V(2, r)\ot M_s(l', r', \eta)\\
\cong &M_s(l'+1, r+r',\eta q^{-1}\frac{(l'+1)_q}{(l')_q})
\oplus M_s(l'-1, r+r'+1, \eta q\frac{(l'-1)_q}{(l')_q}).\\
\end{array}$$
\end{lemma}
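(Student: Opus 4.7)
The strategy is to adapt the argument of Lemma \ref{3.11} by constructing two explicit indecomposable submodules of dimension $ns$ each, whose direct sum exhausts the $2ns$-dimensional tensor product. By Proposition \ref{3.1}(1) and Lemma \ref{3.8}, it suffices to treat $r = r' = 0$, and I would work out the case $\eta \in k$ in detail, the case $\eta = \infty$ being parallel with Lemma \ref{3.9} replacing Lemma \ref{3.10}. Write $V := V(2, 0) \otimes M_s(l', 0, \eta)$.

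Using the basis $\{v_1, v_2\}$ of $V(2, 0)$ from the proof of Lemma \ref{3.11} and the basis $\{v_{i,j}\}$ of $M_s(l', 0, \eta)$ from Lemma \ref{3.10}, I would analyze the $b$-weight decomposition of $V$: the weight-$q^{l'+1}$ subspace is spanned by $\{v_1 \otimes v_{2,j},\, v_2 \otimes v_{1,j}\}_{j=1}^{s}$ and the weight-$q^{l'}$ subspace by $\{v_1 \otimes v_{1,j},\, v_2 \otimes v_{n,j}\}_{j=1}^{s}$. Since the top generator of a summand $M_s(l'+1, 0, \eta_1)$ must live in weight $q^{l'+1}$ and that of $M_s(l'-1, 1, \eta_2)$ in weight $q^{l'}$, I would posit the ans\"atze
\begin{equation*}
u_{1,j}^+ = \xi_j \bigl(v_1 \otimes v_{2,j} + \lambda_+\, v_2 \otimes v_{1,j}\bigr), \quad u_{1,j}^- = \xi'_j \bigl(v_1 \otimes v_{1,j} + \lambda_-\, v_2 \otimes v_{n, j-1} + \nu\, v_2 \otimes v_{n,j}\bigr),
\end{equation*}
(with the convention $v_{n,0} := 0$ of Lemma \ref{3.10}), set $u_{i,j}^\pm := a^{i-1} u_{1,j}^\pm$, and let $N_\pm := \mathrm{span}\{u_{i,j}^\pm\}$. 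The scalars $\lambda_\pm, \nu, \xi_j, \xi'_j$ are to be determined so that $\{u_{i,j}^\pm\}$ realizes the standard basis of the claimed band module in Lemma \ref{3.10}.

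The main obstacle is the $q$-combinatorial determination of these scalars. Applying $\Delta(a) = a \otimes b + 1 \otimes a$ via the $q$-binomial expansion (using $(a \otimes b)(1 \otimes a) = q(1 \otimes a)(a \otimes b)$), one computes $u_{n, j}^\pm$ explicitly as combinations of the tensors $v_\varepsilon \otimes v_{k,j}$. Then applying $\Delta(d) = d \otimes c + 1 \otimes d$ to $u_{1,j}^\pm$ and matching the result against $dw_{1,j} = w_{n, j-1} + \eta_\pm q^{l' \pm 1} w_{n,j}$, and using $\alpha_i(l) = (i)_q(1 - q^{i-l})$ together with the key identity $(l')_q = (l'-1)_q + q^{l'-1} = (l'+1)_q - q^{l'}$, one is forced to take $\lambda_+ = (l'+1)_q$ and a specific $\lambda_-$, after which the twists simplify to $\eta_1 = \eta q^{-1}(l'+1)_q / (l')_q$ and $\eta_2 = \eta q (l'-1)_q / (l')_q$; the scalings $\xi_j, \xi'_j$ are pinned down recursively in $j$ from the same equation. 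The hypothesis $1 < l' < n-1$ ensures $(l')_q, (l' \pm 1)_q \neq 0$, so all divisions are valid and $N_\pm$ are genuine band modules of dimension $ns$.

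Finally, $N_+$ and $N_-$ are indecomposable band modules with disjoint composition-factor sets, namely $\{V(l'+1, 0), V(n-l'-1, l'+1)\}$ versus $\{V(l'-1, 1), V(n-l'+1, l')\}$, which are pairwise distinct under $1 < l' < n-1$. Hence $N_+ \cap N_- = 0$, and since $\dim N_+ + \dim N_- = 2ns = \dim V$, we conclude $V = N_+ \oplus N_-$, yielding the claimed decomposition.
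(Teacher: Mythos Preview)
Your proposal is correct and follows essentially the same approach as the paper. The paper likewise reduces to $r=r'=0$ and $\eta\in k$, uses the same bases, and constructs the two submodules explicitly via $w_{1,j}=\theta^{j}\bigl(v_{1}\ot v_{2,j}+(l'+1)_q\,v_{2}\ot v_{1,j}\bigr)$ with $\theta=\frac{(l')_q}{(l'+1)_q}$ and $u_{1,j}=\gamma^{j}\bigl(v_{1}\ot v_{1,j}+\eta q^{l'}\beta\,v_{2}\ot v_{n,j}+\beta\,v_{2}\ot v_{n,j-1}\bigr)$ with $\beta=\frac{q^{l'}}{(1-q)(l')_q}$, $\gamma=\frac{(l')_q}{(l'-1)_q}$; these are exactly your ans\"atze with the scalars filled in, and the paper concludes directness by comparing socles rather than full composition-factor sets, which is the same argument.
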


\begin{proof}
It is enough to show the lemma for $r=r'=0$.
We only prove the lemma for $\eta\in k$ since the proof is similar for $\eta=\oo$.

Assume $\eta\in k$ and let $M=V(2, 0)\ot M_s(l',0,\eta)$.
Let $\{v_1, v_2\}$ be the standard basis of $V(2,0)$ as stated in the proof of Lemma \ref{3.11},
and let $\{v_{i,j}|1\<i\<n, 1\<j\<s\}$ be the standard basis of $M_s(l',0,\eta)$
as given in Lemma \ref{3.10}. Then $M$ has a $k$-basis $\{v_1\ot v_{i,j}, v_2\ot v_{i,j}|1\<i\<n, 1\<j\<s\}$.

Now let $\b=\frac{q^{l'}}{(1-q)(l')_q}$ and $\g=\frac{(l')_q}{(l'-1)_q}$.
For $1\<i\<n$ and $1\<j\<s$, define $u_{i,j}\in M$ by
$u_{1,j}=\g^{j}(v_1\ot v_{1,j}+\eta q^{l'}\b v_2\ot v_{n,j}+\b v_2\ot v_{n,j-1})$
and $u_{i,j}=a^{i-1}u_{1,j}$ for $1<i\<n$.
Then by a standard computation, one can check that
$N_1:={\rm span}\{u_{i,j}|1\<i\<n, 1\<j\<s\}$ is a submodule of $M$ and
$N_1\cong M_s(l'-1, 1, \eta q\g^{-1})=M_s(l'-1, 1, \eta q\frac{(l'-1)_q}{(l')_q})$
by Lemma \ref{3.10}.

Next, let $\theta=\frac{(l')_q}{(l'+1)_q}$. For $1\<i\<n$ and $1\<j\<s$, define $w_{i,j}\in M$
by $w_{1,j}=\theta^j(v_1\ot v_{2,j}+(l'+1)_qv_2\ot v_{1,j})$
and $w_{i,j}=a^{i-1}w_{1,j}$ for $1<i\<n$. Then a straightforward verification shows that
$N_2:={\rm span}\{w_{i,j}|1\<i\<n, 1\<j\<s\}$
is a submodule of $M$ and $N_2\cong M_s(l'+1, 0, \eta q^{-1}\theta^{-1})
=M_s(l'+1,0, \eta q^{-1}\frac{(l'+1)_q}{(l')_q})$.

Finally, since ${\rm soc}(N_1)\cong sV(l'-1,1)$ and ${\rm soc}(N_2)\cong sV(l'+1,0)$,
the sum $N_1+N_2$ is direct. Then it follows from ${\rm dim}(M)={\rm dim}(N_1\oplus N_2)$ that
$$\begin{array}{c}
M=N_1\oplus N_2\cong M_s(l'-1, 1, \eta q\frac{(l'-1)_q}{(l')_q})\oplus
M_s(l'+1,0, \eta q^{-1}\frac{(l'+1)_q}{(l')_q}).\\
\end{array}$$
\end{proof}

\begin{theorem}\label{3.16}
Let $1\<l, l'<n$, $r,r'\in{\mathbb Z}_n$, $\eta\in{\mathbb P}^1(k)$ and $s\geqslant 1$. Assume
that $l+l'\leqslant n$ and let $l_1={\rm min}\{l, l'\}$. Then
$$\begin{array}{rl}
&V(l,r)\ot M_s(l',r',\eta)\\
\cong&(\oplus_{i=0}^{l_1-1}M_s(l+l'-1-2i, r+r'+i, \eta q^{2i-l+1}\frac{(l+l'-1-2i)_q}{(l')_q}))\\
&\oplus(\oplus_{c(l+l'-1)\<i\<l-1} sP(n+l+l'-1-2i, r+r'+i)).\\
\end{array}$$
\end{theorem}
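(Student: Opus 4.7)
The plan is to induct on $l$ from $l=1$ to $l=n-1$. The case $l=1$ is exactly Lemma \ref{3.8}: the formula's band part collapses to $M_s(l',r+r',\eta)$ since the $\eta$-shift factor $q^{0}(l')_q/(l')_q$ equals $1$, and the projective part is empty as $c(l')\geqslant 1>0=l-1$. For the base case $l=2$, the projective part is empty whenever $1<l'\leqslant n-2$ (since then $c(l'+1)\geqslant 2>l-1=1$) and the formula reduces to the two band summands produced by Lemma \ref{3.15}; when $l'=1$ the formula specializes to $M_s(2,r+r',\eta q^{-1}(2)_q)\oplus sV(n,r+r'+1)$, matching Lemma \ref{3.11}.

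For the inductive step, assume the formula is established for both $V(l,r)\ot M_s(l',r',\eta)$ and $V(l-1,r+1)\ot M_s(l',r',\eta)$ for all admissible $l',r',\eta,s$, and consider $V(l+1,r)\ot M_s(l',r',\eta)$ with $(l+1)+l'\leqslant n$. Since $l\leqslant n-2$, Proposition \ref{3.1}(1) gives $V(2,0)\ot V(l,r)\cong V(l+1,r)\oplus V(l-1,r+1)$, hence
\begin{equation*}
V(2,0)\ot V(l,r)\ot M_s(l',r',\eta)\cong V(l+1,r)\ot M_s(l',r',\eta)\oplus V(l-1,r+1)\ot M_s(l',r',\eta).
\end{equation*}
I will compute the left-hand side in two passes: first decompose $V(l,r)\ot M_s(l',r',\eta)$ using the induction hypothesis into band and projective summands; then distribute $V(2,0)\ot -$ over these summands. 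For a band summand $M_s(L,R,\zeta)$, the appropriate tool is Lemma \ref{3.11} (when $L=1$), Lemma \ref{3.15} (when $1<L<n-1$), or Corollary \ref{3.14} (when $L=n-1$); for a projective summand $sP(L,R)$ the answer comes from Theorem \ref{3.3} or Theorem \ref{3.5}. Since the decomposition of $V(l-1,r+1)\ot M_s(l',r',\eta)$ is known by induction, the Krull--Schmidt theorem lets me cancel it on both sides and isolate the decomposition of $V(l+1,r)\ot M_s(l',r',\eta)$.

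The main obstacle will be the bookkeeping required to match the result to the target formula. On the band side I must verify that the $\eta$-parameters telescope correctly: Lemma \ref{3.15} applied to the $i$-th band summand $M_s(l+l'-1-2i,\,r+r'+i,\,\eta q^{2i-l+1}(l+l'-1-2i)_q/(l')_q)$ produces two modules whose $\eta$-parameters, after the factor $(l+l'-1-2i)_q$ cancels, simplify to $\eta q^{2i-l}(l+l'-2i)_q/(l')_q$ and $\eta q^{2i-l+2}(l+l'-2-2i)_q/(l')_q$; these must agree exactly with the parameters predicted by the target formula for $V(l+1,r)\ot M_s(l',r',\eta)$ at index $i$ and for $V(l-1,r+1)\ot M_s(l',r',\eta)$ at index $i$, respectively, which is a direct check. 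On the projective side I must track the multiplicities coming from Theorems \ref{3.3} and \ref{3.5} together with the boundary contributions $sV(n,\bullet)=sP(n,\bullet)$ arising from Lemma \ref{3.11} (when a summand of first index $1$ appears) and Corollary \ref{3.14} (when a summand of first index $n-1$ appears, which here occurs only at $i=0$ when $l+l'=n$), and verify that their total matches the predicted $sP$-part of the formula at level $l+1$.
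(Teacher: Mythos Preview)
Your proposal is correct and follows essentially the same approach as the paper: induction on $l$ with base cases $l=1,2$ handled by Lemmas \ref{3.8}, \ref{3.11}, \ref{3.15}, and the inductive step carried out by tensoring with $V(2,0)$ and cancelling via Krull--Schmidt. The paper organizes the induction slightly differently (it writes the step as $V(2,0)\ot V(l-1,0)\cong V(l,0)\oplus V(l-2,1)$ and then splits the verification into four explicit cases $l\leqslant l'$, $l=l'+1$, $l=l'+2$, $l>l'+2$ to track exactly when boundary band summands of first index $1$ or $n-1$ appear), but this is only a difference in bookkeeping, not in strategy.
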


\begin{proof}
It is enough to show the proposition for $r=r'=0$. We prove it by the induction on $l$.
For $l=1$ and $l=2$, it follows from Lemmas \ref{3.8}, \ref{3.11} and \ref{3.15}, respectively.
Now let $l>2$ and assume that the theorem holds for less $l$.

Case 1: $l\<l'$. In this case, $l-2<l-1<l'$. Hence by the induction hypothesis,
Lemma \ref{3.15} and Proposition \ref{3.1}(1), we have
$$\begin{array}{rl}
&V(2,0)\ot V(l-1,0)\ot M_s(l',0,\eta)\\
\cong& \oplus_{i=0}^{l-2}V(2,0)\ot M_s(l+l'-2-2i, i, \eta q^{2i-l+2}\frac{(l'+l-2-2i)_q}{(l')_q})\\
\cong&(\oplus_{i=0}^{l-2}M_s(l+l'-1-2i, i, \eta q^{2i-l+1}\frac{(l'+l-1-2i)_q}{(l')_q}))\\
&\oplus(\oplus_{i=0}^{l-2}M_s(l+l'-3-2i, i+1, \eta q^{2i-l+3}\frac{(l'+l-3-2i)_q}{(l')_q}))\\
\end{array}$$
and
$$\begin{array}{rl}
&V(2,0)\ot V(l-1,0)\ot M_s(l',0,\eta)\\
\cong& V(l,0)\ot M_s(l',0,\eta)\oplus V(l-2,1)\ot M_s(l',0,\eta)\\
\cong& V(l,0)\ot M_s(l',0,\eta)
\oplus(\oplus_{i=0}^{l-3} M_s(l+l'-3-2i,i+1,\eta q^{2i-l+3}\frac{(l+l'-3-2i)_q}{(l')_q})).\\
\end{array}$$
Thus, using Krull-Schmidt Theorem, one gets that
$$\begin{array}{c}
V(l,0)\ot M_s(l',0,\eta)\cong
\oplus_{i=0}^{l-1}M_s(l+l'-1-2i, i, \eta q^{2i-l+1}\frac{(l+l'-1-2i)_q}{(l')_q}).\\
\end{array}$$

Case 2: $l=l'+1$. In this case,
$l-2<l-1=l'$. Hence by the induction hypothesis,
and Lemmas \ref{3.11} and \ref{3.15}, we have
$$\begin{array}{rl}
&V(2,0)\ot V(l-1,0)\ot M_s(l',0,\eta)\\
\cong& \oplus_{i=0}^{l-2}V(2,0)\ot M_s(l+l'-2-2i, i, \eta q^{2i-l+2}\frac{(l'+l-2-2i)_q}{(l')_q})\\
\cong& V(2,0)\ot M_s(1, l'-1, \eta q^{l-2}\frac{1}{(l')_q})\\
&\oplus(\oplus_{i=0}^{l'-2}V(2,0)\ot M_s(l+l'-2-2i, i, \eta q^{2i-l+2}\frac{(l'+l-2-2i)_q}{(l')_q}))\\
\cong& M_s(2, l'-1, \eta q^{l-3}\frac{(2)_q}{(l')_q})\oplus sV(n, l')\\
&\oplus(\oplus_{i=0}^{l'-2}M_s(l+l'-1-2i, i, \eta q^{2i-l+1}\frac{(l'+l-1-2i)_q}{(l')_q}))\\
&\oplus(\oplus_{i=0}^{l'-2}M_s(l+l'-3-2i, i+1, \eta q^{2i-l+3}\frac{(l'+l-3-2i)_q}{(l')_q})).\\
\end{array}$$

Then an argument similar to Case 1 shows that
$$\begin{array}{rl}
&V(l,0)\ot M_s(l',0,\eta)\\
\cong&(\oplus_{i=0}^{l'-1}M_s(l+l'-1-2i, i, \eta q^{2i-l+1}\frac{(l+l'-1-2i)_q}{(l')_q}))
\oplus sV(n,l').\\
\end{array}$$

Case 3: $l=l'+2$. In this case, $l'=l-2<l-1$. Hence by the induction hypothesis (or Case 2),
Proposition \ref{3.1}(2) and Lemma \ref{3.15}, we have
$$\begin{array}{rl}
&V(2,0)\ot V(l-1,0)\ot M_s(l',0,\eta)\\
\cong&(\oplus_{i=0}^{l'-1}V(2,0)\ot M_s(l+l'-2-2i, i, \eta q^{2i-l+2}\frac{(l+l'-2-2i)_q}{(l')_q}))\\
&\oplus sV(2,0)\ot V(n,l')\\
\cong&(\oplus_{i=0}^{l'-1}M_s(l+l'-1-2i, i, \eta q^{2i-l+1}\frac{(l'+l-1-2i)_q}{(l')_q}))\\
&\oplus(\oplus_{i=0}^{l'-1}M_s(l+l'-3-2i, i+1, \eta q^{2i-l+3}\frac{(l'+l-3-2i)_q}{(l')_q}))
\oplus sP(n-1, l'+1).\\
\end{array}$$
Then an argument similar to Case 1 shows that
$$\begin{array}{rl}
&V(l,0)\ot M_s(l',0,\eta)\\
\cong&(\oplus_{i=0}^{l'-1}M_s(l+l'-1-2i, i, \eta q^{2i-l+1}\frac{(l+l'-1-2i)_q}{(l')_q}))
\oplus sP(n-1,l'+1).\\
\end{array}$$

Case 4: $l>l'+2$. In this case, $l-1>l-2>l'$. We assume that $l+l'$ is odd since the proof is
similar for $l+l'$ being even. Then $c(l+l'-1)=c(l+l'-2)$. By the induction hypothesis,
Lemma \ref{3.15} and Theorems \ref{3.3} and \ref{3.5}, we have
$$\begin{array}{rl}
&V(2, 0)\ot V(l-1,0)\ot M_s(l',0,\eta)\\
\cong&(\oplus_{i=0}^{l'-1}V(2, 0)\ot M_s(l+l'-2-2i, i, \eta q^{2i-l+2}\frac{(l+l'-2-2i)_q}{(l')_q}))\\
&\oplus sV(2, 0)\ot P(n-1, c(l+l'-2))\\
&\oplus(\oplus_{c(l+l')\<i\<l-2} sV(2, 0)\ot P(n+l+l'-2-2i, i))\\
\cong&(\oplus_{i=0}^{l'-1}M_s(l+l'-1-2i, i, \eta q^{2i-l+1}\frac{(l'+l-1-2i)_q}{(l')_q}))\\
&\oplus(\oplus_{i=0}^{l'-1}M_s(l+l'-3-2i, i+1, \eta q^{2i-l+3}\frac{(l'+l-3-2i)_q}{(l')_q}))\\
&\oplus 2sP(n, c(l+l'-1))\oplus sP(n-2, c(l+l'+1))\\
&\oplus(\oplus_{c(l+l'+1)\<i\<l-2} s(P(n+l+l'-1-2i, i)\oplus P(n+l+l'-3-2i, i+1))\\
\cong&(\oplus_{i=0}^{l'-1}M_s(l+l'-1-2i, i, \eta q^{2i-l+1}\frac{(l'+l-1-2i)_q}{(l')_q}))\\
&\oplus(\oplus_{i=0}^{l'-1}M_s(l+l'-3-2i, i+1, \eta q^{2i-l+3}\frac{(l'+l-3-2i)_q}{(l')_q}))\\
&\oplus(\oplus_{i=c(l+l'-1)}^{l-2} sP(n+l+l'-1-2i, i))\\
&\oplus(\oplus_{i=c(l+l'-1)}^{l-1} sP(n+l+l'-1-2i, i)).\\
\end{array}$$
Then an argument similar to Case 1 shows that
$$\begin{array}{rl}
&V(l,0)\ot M_s(l',0,\eta)\\
\cong&(\oplus_{i=0}^{l'-1}M_s(l+l'-1-2i, i, \eta q^{2i-l+1}\frac{(l+l'-1-2i)_q}{(l')_q}))\\
&\oplus(\oplus_{i=c(l+l'-1)}^{l-1} sP(n+l+l'-1-2i, i)).\\
\end{array}$$
This completes the proof.
\end{proof}

\begin{theorem}\label{3.17}
Let $1\leqslant l, l'\<n$ with $l'\neq n$, $r, r'\in{\mathbb Z}_n$, $s\geqslant 1$ and $\eta\in\mathbb{P}^1(k)$.
Assume that $t=l+l'-(n+1)\geqslant 0$. Let $l_1={\rm min}\{l,l'\}$. Then
$$\begin{array}{rl}
&V(l,r)\ot M_s(l',r',\eta)\\
\cong&(\oplus_{t+1\<i\<l_1-1}M_s(l+l'-1-2i,r+r'+i,\eta q^{2i-l+1}\frac{(l+l'-1-2i)_q}{(l')_q}))\\
&\oplus(\oplus_{i=c(t)}^tsP(l+l'-1-2i, r+r'+i))\\
&\oplus(\oplus_{c(l+l'-1)\<i\<l-1}sP(n+l+l'-1-2i, r+r'+i)).\\
\end{array}$$
\end{theorem}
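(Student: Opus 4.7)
The plan is to prove Theorem~\ref{3.17} by induction on $l$, in parallel with the argument for Theorem~\ref{3.16}. First I would reduce to the case $r = r' = 0$ using Proposition~\ref{3.1}(1) and Lemma~\ref{3.8}: writing $V(l, r) \cong V(1, r) \otimes V(l, 0)$ and absorbing the $V(1, r)$ factor via Lemma~\ref{3.8} only shifts all $r$-parameters in the decomposition by $r + r'$. The base case is $l = 2$. Since $t = l + l' - (n+1) \geq 0$ and $l' \leq n - 1$, this forces $l' = n-1$ and $t = 0$, and the claimed decomposition collapses to
$$V(2, 0) \otimes M_s(n-1, 0, \eta) \cong M_s(n-2, 1, \eta (2)_q) \oplus s V(n, 0),$$
which is exactly Corollary~\ref{3.14} (after checking that $\eta q^{2i-l+1}(l+l'-1-2i)_q/(l')_q$ with $l=2$, $l'=n-1$, $i=1$ collapses to $\eta(2)_q$, using $q(n-2)_q/(n-1)_q = q(1+q^{-1}) = (2)_q$).

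For the inductive step, assuming the theorem holds for all smaller values of $l$, I would compute $V(2, 0) \otimes V(l-1, 0) \otimes M_s(l', 0, \eta)$ in two ways. On one hand, Proposition~\ref{3.1}(1) gives $V(2, 0) \otimes V(l-1, 0) \cong V(l, 0) \oplus V(l-2, 1)$, so this triple tensor is the direct sum of $V(l, 0) \otimes M_s(l', 0, \eta)$ (the target) and $V(l-2, 1) \otimes M_s(l', 0, \eta)$; the latter is known inductively, invoking Theorem~\ref{3.16} when $(l-2) + l' \leq n$ and Theorem~\ref{3.17} when $(l-2) + l' \geq n+1$. On the other hand, one first decomposes $V(l-1, 0) \otimes M_s(l', 0, \eta)$ by the inductive hypothesis and then applies $V(2, 0) \otimes -$ to each summand: Lemma~\ref{3.15} handles the generic band summands $M_s(l'', r'', \eta'')$ with $1 < l'' < n-1$; Lemma~\ref{3.11} and Corollary~\ref{3.14} handle the boundary band summands with $l'' = 1$ or $l'' = n-1$; and Theorems~\ref{3.3} and~\ref{3.5} handle the projective summands (distinguishing whether the projective factor has first parameter at most $n-2$ or equal to $n-1$). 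Matching the two expressions and cancelling via Krull--Schmidt isolates the target.

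As in the proof of Theorem~\ref{3.16}, the argument will split into subcases according to the relative sizes of $l$ and $l'$ (roughly $l \leq l'$, $l = l'+1$, $l = l'+2$, $l > l'+2$), with the additional need to track whether $(l-1) + l'$ and $(l-2) + l'$ lie in the regime of Theorem~\ref{3.16} or Theorem~\ref{3.17}. The main obstacle will be bookkeeping: the $\eta$-coefficient $\eta q^{2i - l + 1} (l + l' - 1 - 2i)_q / (l')_q$ must be tracked through Lemma~\ref{3.15}, which produces two new band summands whose coefficients involve ratios of quantum integers, and one has to verify that the telescoping across indices yields precisely the coefficient predicted by the formula for each $i$. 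The parity of $l + l'$ also matters, since $c(l+l'-1)$ shifts by one accordingly, so both parities should be handled separately, as in Case~4 of the proof of Theorem~\ref{3.16}; and special care is needed at the boundary indices $i = t$ and $i = c(l+l'-1)$, where Lemma~\ref{3.15} must be swapped for Lemma~\ref{3.11}, Corollary~\ref{3.14}, or Theorems~\ref{3.3} and~\ref{3.5}, and where projective summands arising from two different sources have to be reconciled.
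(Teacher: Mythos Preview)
Your proposal is correct and follows essentially the same approach as the paper: induction on $l$ with base case $l=2$ via Corollary~\ref{3.14}, and inductive step via the two-way computation of $V(2,0)\ot V(l-1,0)\ot M_s(l',0,\eta)$ combined with Krull--Schmidt. The only organizational difference is that the paper splits the case analysis primarily by the value of $t$ (namely $t=0$, $t=1$, $t\geqslant 2$, which governs whether the inductive terms $V(l-1,0)\ot M_s$ and $V(l-2,1)\ot M_s$ fall under Theorem~\ref{3.16} or Theorem~\ref{3.17}) rather than by the relative size of $l$ and $l'$ as you propose; these are equivalent bookkeeping choices.
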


\begin{proof}
It is enough to show the theorem for $r=r'=0$.
We prove it by the induction on $l$ for the three cases: $t=0$, $t=1$ and $t\>2$, respectively.
Note that $l'=t+n+1-l\geqslant t+1$ by $l\<n$, and $l\>2$ by $l'<n$ and $l+l'\>n+1$.
If $l=2$, then $l'=n-1$. In this case, the desired decomposition follows from Corollary \ref{3.14}.
Now assume that $l>2$.

Suppose $t=0$. If $l\<l'+2$, then
the desired decomposition follows from an argument similar to Theorem \ref{3.16}.
Now let $l>l'+2$. Then by Theorem \ref{3.16}, we have
$$\begin{array}{rl}
V(l-2,1)\ot M_s(l',0,\eta)
\cong&(\oplus_{i=0}^{l'-1}M_s(l+l'-3-2i, i+1, \eta q^{2i-l+3}\frac{(l+l'-3-2i)_q}{(l')_q}))\\
&\oplus(\oplus_{i=c(l+l'-3)}^{l-3} sP(n+l+l'-3-2i, i+1))\\
\cong&(\oplus_{i=1}^{l'}M_s(l+l'-1-2i, i, \eta q^{2i-l+1}\frac{(l+l'-1-2i)_q}{(l')_q}))\\
&\oplus(\oplus_{i=c(l+l'-1)}^{l-2} sP(n+l+l'-1-2i, i))\\
\end{array}$$
and
$$\begin{array}{rl}
V(l-1,0)\ot M_s(l',0,\eta)
\cong&(\oplus_{i=0}^{l'-1}M_s(l+l'-2-2i, i, \eta q^{2i-l+2}\frac{(l+l'-2-2i)_q}{(l')_q}))\\
&\oplus(\oplus_{i=c(l+l'-2)}^{l-2} sP(n+l+l'-2-2i, i)).\\
\end{array}$$
We may assume $l+l'$ is odd since the proof is similar when $l+l'$ is even.
Then $c(l+l'-2)=c(l+l'-1)=\frac{l+l'-1}{2}$. Hence by
Theorems \ref{3.3} and \ref{3.5}, Corollary \ref{3.14} and Lemma \ref{3.15},
one can check that
$$\begin{array}{rl}
&V(2,0)\ot V(l-1,0)\ot M_s(l',0,\eta)\\
\cong&V(2,0)\ot M_s(n-1, 0, \eta q^{2-l}\frac{(n-1)_q}{(l')_q})\\
&\oplus(\oplus_{1\<i\<l'-1}V(2,0)\ot M_s(l+l'-2-2i, i, \eta q^{2i-l+2}\frac{(l+l'-2-2i)_q}{(l')_q}))\\
&\oplus sV(2,0)\ot P(n-1, \frac{l+l'-1}{2})\\
&\oplus(\oplus_{c(l+l'-1)<i\<l-2} sV(2,0)\ot P(n+l+l'-2-2i, i))\\
\cong& sV(n,0)\oplus(\oplus_{1\<i\<l'-1}M_s(l+l'-1-2i, i, \eta q^{2i-l+1}\frac{(l+l'-1-2i)_q}{(l')_q}))\\
&\oplus(\oplus_{i=1}^{l'}M_s(l+l'-1-2i, i, \eta q^{2i-l+1}\frac{(l+l'-1-2i)_q}{(l')_q}))\\
&\oplus(\oplus_{i=c(l+l'-1)}^{l-2}sP(n+l+l'-1-2i, i))\\
&\oplus(\oplus_{i=c(l+l'-1)}^{l-1}sP(n+l+l'-1-2i, i)).\\
\end{array}$$
Thus, it follows from an argument similar to the proof of Theorem \ref{3.16} that
$$\begin{array}{rl}
V(l,0)\ot M_s(l',0,\eta)
\cong&(\oplus_{1\<i\<l'-1}M_s(l+l'-1-2i, i, \eta q^{2i-l+1}\frac{(l+l'-1-2i)_q}{(l')_q}))\\
&\oplus sV(n,0)\oplus(\oplus_{i=c(l+l'-1)}^{l-1}sP(n+l+l'-1-2i, i)).\\
\end{array}$$

For $t=1$, the proof is similar to the case of $t=0$.
Now suppose $t\>2$. If $l\<l'+2$, then the desired decomposition
follows from an argument similar to Theorem \ref{3.16}. Now let $l>l'+2$.
Then by the induction hypothesis, we have
$$\begin{array}{rl}
&V(l-2,1)\ot M_s(l',0,\eta)\\
\cong&(\oplus_{i=c(t-2)}^{t-2}sP(l+l'-3-2i, i+1))\\
&\oplus(\oplus_{i=t-1}^{l'-1}M_s(l+l'-3-2i,i+1,\eta q^{2i-l+3}\frac{(l+l'-3-2i)_q}{(l')_q}))\\
&\oplus(\oplus_{i=c(l+l'-3)}^{l-3}sP(n+l+l'-3-2i, i+1))\\
\end{array}$$
and
$$\begin{array}{rl}
V(l-1,0)\ot M_s(l',0,\eta)
\cong&(\oplus_{i=c(t-1)}^{t-1}sP(l+l'-2-2i, i))\\
&\oplus(\oplus_{i=t}^{l'-1}M_s(l+l'-2-2i,i,\eta q^{2i-l+2}\frac{(l+l'-2-2i)_q}{(l')_q}))\\
&\oplus(\oplus_{i=c(l+l'-2)}^{l-2}sP(n+l+l'-2-2i, i)).\\
\end{array}$$
In the following, we only consider the case that $t$ and $l+l'$ are both odd, since the proofs
are similar for the other cases. In this case, $c(t)=c(t-1)+1=\frac{t+1}{2}$ and
$c(l+l'-1)=c(l+l'-2)=\frac{l+l'-1}{2}$. By Proposition \ref{3.1}(2), Theorems \ref{3.3},
\ref{3.5} and Lemma \ref{3.15}, a straightforward computation shows that
$$\begin{array}{rl}
&V(2,0)\ot V(l-1,0)\ot M_s(l',0,\eta)\\
\cong&(\oplus_{i=c(t)}^{t-1}sP(l+l'-1-2i, i))
\oplus(\oplus_{i=c(t)}^{t}sP(l+l'-1-2i, i))\\
&\oplus(\oplus_{i=t}^{l'-1} M_s(l+l'-1-2i,i,\eta q^{2i-l+1}\frac{(l+l'-1-2i)_q}{(l')_q}))\\
&\oplus(\oplus_{i=t+1}^{l'}M_s(l+l'-1-2i,i,\eta q^{2i-l+1}\frac{(l+l'-1-2i)_q}{(l')_q}))\\
&\oplus(\oplus_{i=c(l+l'-1)}^{l-2}sP(n+l+l'-1-2i, i))\\
&\oplus(\oplus_{i=c(l+l'-1)}^{l-1}sP(n+l+l'-1-2i, i)).\\
\end{array}$$
Then by an argument similar to the proof of Theorem \ref{3.16}, one gets that
$$\begin{array}{rl}
V(l,0)\ot M_s(l',0,\eta)
\cong&(\oplus_{i=c(t)}^{t}sP(l+l'-1-2i, i))\\
&\oplus(\oplus_{t+1\<i\<l'-1}M_s(l+l'-1-2i,i,\eta q^{2i-l+1}\frac{(l+l'-1-2i)_q}{(l')_q}))\\
&\oplus(\oplus_{i=c(l+l'-1)}^{l-1}sP(n+l+l'-1-2i, i)).\\
\end{array}$$
This completes the proof.
\end{proof}

\section{\bf Tensor product of a projective module
with a non-simple module}\selabel{4}

In this section, we investigate the tensor product $P\ot M$ of a non-simple projective
indecomposable module $P$ with a non-simple indecomposable module $M$. Since $P\ot N$ is projective
for any module $N$, $P\ot M$ is isomorphic to the direct sum of all the tensor products $P\ot V$, where $V$
ranges all composition factors of $M$. However, the decompositions of all such tensor products $P\ot V$
are known by Theorems \ref{3.3} and \ref{3.5}, which gives rise to the decomposition of $P\ot M$.

\begin{proposition}\label{4.1}  Let $1\<l\<l'<n$ and $r, r'\in{\mathbb Z}_n$. Assume that $l+l'\<n$.
Then for all $m\>1$, we have
$$\begin{array}{rl}
&P(l, r)\ot\O^{\pm m}V(l', r')\\
\cong &(\oplus_{i=0}^{l-1}(m+\frac{1+(-1)^m}{2})P(l+l'-1-2i, r+r'+i))\\
&\oplus(\oplus_{i=l'}^{l+l'-1}(m+\frac{1-(-1)^m}{2})P(n+l+l'-1-2i, r+r'+i))\\
&\oplus(\oplus_{c(l+l'-1)\<i\<l'-1}2(m+\frac{1+(-1)^m}{2})P(n+l+l'-1-2i, r+r'+i))\\
&\oplus(\oplus_{1\<i\<c(n-l-l')}2(m+\frac{1-(-1)^m}{2})P(l+l'-1+2i, r+r'-i)).\\
\end{array}$$
\end{proposition}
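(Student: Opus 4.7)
The plan is to use the reduction pointed out at the beginning of \seref{4}: since $P(l, r) \ot N$ is projective (hence also injective, as $H_n(1, q)$ is symmetric) for every module $N$, any short exact sequence remains exact and split after tensoring with $P(l, r)$. Inducting on the composition length of $\O^{\pm m} V(l', r')$ gives
$$P(l, r) \ot \O^{\pm m} V(l', r') \;\cong\; \bigoplus_{V} P(l, r) \ot V,$$
where $V$ ranges over the composition factors of $\O^{\pm m} V(l', r')$ counted with multiplicity.

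First I would identify those composition factors from the socle/top description of the Loewy-length-two string modules in \seref{2}. Regardless of the sign of $\pm m$, the module $\O^{\pm m} V(l', r')$ has exactly $m + \tfrac{1 + (-1)^m}{2}$ composition factors isomorphic to $V(l', r')$ and $m + \tfrac{1 - (-1)^m}{2}$ composition factors isomorphic to $V(n - l', r' + l')$. Next I would apply \thref{3.3} (together with the commutativity $M \ot N \cong N \ot M$) to decompose both $V(l', r') \ot P(l, r)$ and $V(n - l', r' + l') \ot P(l, r)$. The assumptions $l \<l'$ and $l + l' \<n$ ensure that $l + (n - l') \<n$, so both tensor products fall under the hypothesis of \thref{3.3}, and in both cases the minimum parameter $l_1$ there equals $l$.

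The main obstacle is purely combinatorial: the decomposition of $V(n - l', r' + l') \ot P(l, r)$ produced by \thref{3.3} comes naturally indexed by $0 \<i \<l - 1$ and $c(n + l - l' - 1) \<i \<n - l' - 1$, and must be matched with the two families $P(n + l + l' - 1 - 2i, r + r' + i)$ for $l' \<i \<l + l' - 1$ and $2P(l + l' - 1 + 2i, r + r' - i)$ for $1 \<i \<c(n - l - l')$ appearing in the statement. The first family is handled by the substitution $i \mapsto i + l'$; the second by $i \mapsto n - i - l'$, working modulo $n$ in the second component and using the identity $n - l' - c(n + l - l' - 1) = c(n - l - l')$, which follows easily from $c(t) = \lceil t/2 \rceil$. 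Weighting each summand by the multiplicity of the corresponding composition factor then yields the four-term decomposition in the statement.
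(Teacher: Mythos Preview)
Your proposal is correct and follows essentially the same approach as the paper: the paper tensors $P(l,r)$ with the socle/top exact sequence of $\O^{\pm m}V(l',r')$, observes it splits by projectivity, and then applies \thref{3.3} to $P(l,r)\ot V(l',r')$ and $P(l,r)\ot V(n-l',r'+l')$ followed by the same two reindexings you describe. Your framing via composition factors is just a slight abstraction of this same argument.
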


\begin{proof}
By Corollary \ref{3.4} and Proposition \ref{3.6}, it is enough to show the proposition for $r=r'=0$.
We may assume that $m$ is odd since the proof is similar when $m$ is even.
In this case, there are two exact sequences
$$\begin{array}{c}
0\ra mV(l', 0)\ra\O^mV(l', 0)\ra(m+1)V(n-l', l')\ra 0,\\
0\ra (m+1)V(n-l', l')\ra\O^{-m}V(l', 0)\ra mV(l', 0)\ra 0.\\
\end{array}$$
Applying $P(l, 0)\ot$ to the above sequences, one gets the following exact sequences
$$\begin{array}{rl}
0\ra mP(l, 0)\ot V(l', 0)\!\!\!&\ra P(l, 0)\ot\O^mV(l', 0)\\
&\ra(m+1)P(l, 0)\ot V(n-l', l')\ra 0,\\
0\ra(m+1)P(l, 0)\ot V(n-l', l')\!\!\!&\ra P(l, 0)\ot\O^{-m}V(l', 0)\\
&\ra mP(l, 0)\ot V(l', 0)\ra 0.\\
\end{array}$$
They are split since $P(l, 0)\ot V(l', 0)$ and $P(l, 0)\ot V(n-l', l')$ are both projective.
By Theorem \ref{3.3}, we have
$$\begin{array}{rl}
P(l,0)\ot V(l',0)
\cong&(\oplus_{i=0}^{l-1}P(l+l'-1-2i, i))\\
&\oplus(\oplus_{c(l+l'-1)\<i\<l'-1}2P(n+l+l'-1-2i, i)).\\
\end{array}$$
By $1\<l\<l'<n$ and $l+l'\<n$, one knows that $1\<l\<n-l'<n$ and $l+(n-l')\<n$.
Hence similarly, we have
$$\begin{array}{rl}
P(l,0)\ot V(n-l',l')
\cong&(\oplus_{i=0}^{l-1}P(l+n-l'-1-2i, l'+i))\\
&\oplus(\oplus_{c(l+n-l'-1)\<i\<n-l'-1}2P(2n+l-l'-1-2i, l'+i))\\
\cong&(\oplus_{i=l'}^{l+l'-1}P(n+l+l'-1-2i, i))\\
&\oplus(\oplus_{1\<i\<c(n-l-l')}2P(l+l'-1+2i, -i)).\\
\end{array}$$
It follows that
$$\begin{array}{rl}
&P(l, 0)\ot\O^mV(l', 0)\cong  P(l, 0)\ot\O^{-m}V(l', 0)\\
\cong&(\oplus_{i=0}^{l-1}mP(l+l'-1-2i, i))\\
&\oplus(\oplus_{c(l+l'-1)\<i\<l'-1}2mP(n+l+l'-1-2i, i))\\
&\oplus(\oplus_{i=l'}^{l+l'-1}(m+1)P(n+l+l'-1-2i, i))\\
&\oplus(\oplus_{1\<i\<c(n-l-l')}2(m+1)P(l+l'-1+2i, -i)).\\
\end{array}$$
\end{proof}

\begin{corollary}\label{4.2}  Let $2\<l\<l'<n$ and $r, r'\in{\mathbb Z}_n$. Assume that
$t=l+l'-(n+1)\>0$. Then for all $m\>1$,
$$\begin{array}{rl}
&P(l, r)\ot\O^{\pm m}V(l', r')\\
\cong&(\oplus_{i=c(t)}^t2(m+\frac{1+(-1)^m}{2})P(l+l'-1-2i, r+r'+i))\\
&\oplus(\oplus_{i=t+1}^{l-1}(m+\frac{1+(-1)^m}{2})P(l+l'-1-2i, r+r'+i))\\
&\oplus(\oplus_{i=l'}^{n-1}(m+\frac{1-(-1)^m}{2})P(n+l+l'-1-2i, r+r'+i))\\
&\oplus(\oplus_{c(l+l'-1)\<i\<l'-1}2(m+\frac{1+(-1)^m}{2})P(n+l+l'-1-2i, r+r'+i)).\\
\end{array}$$
\end{corollary}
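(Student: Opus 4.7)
The plan is to mimic the strategy of \prref{4.1}, but now in the regime $t = l+l'-(n+1)\>0$ where \thref{3.5} replaces \thref{3.3} on one side. First, by \coref{3.4} (twisting by a $1$-dimensional simple) I may assume $r=r'=0$. I will treat $m$ odd; the case $m$ even is analogous with the parities swapped.

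As in \prref{4.1}, starting from the short exact sequences
\begin{gather*}
0\ra mV(l',0)\ra \O^mV(l',0)\ra (m+1)V(n-l',l')\ra 0,\\
0\ra (m+1)V(n-l',l')\ra \O^{-m}V(l',0)\ra mV(l',0)\ra 0,
\end{gather*}
and tensoring each of them with $P(l,0)$, both resulting sequences split because $P(l,0)\ot M$ is projective for every $M$ (the Fundamental Theorem of Hopf modules). Hence
$$P(l,0)\ot \O^{\pm m}V(l',0)\cong m\bigl(P(l,0)\ot V(l',0)\bigr)\oplus (m+1)\bigl(P(l,0)\ot V(n-l',l')\bigr).$$
So I have reduced the problem to computing the two projective tensor products on the right and then adding multiplicities.

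For $P(l,0)\ot V(l',0)=V(l',0)\ot P(l,0)$, since $l+l'\>n+1$, \thref{3.5} applies (with the roles of the two indices in the theorem being played by $l'$ and $l$, so $l_1=\min\{l',l\}=l$); this yields
$$V(l',0)\ot P(l,0)\cong \bigoplus_{i=c(t)}^t 2P(l+l'-1-2i,i)\oplus\bigoplus_{i=t+1}^{l-1}P(l+l'-1-2i,i)\oplus\bigoplus_{c(l+l'-1)\<i\<l'-1}2P(n+l+l'-1-2i,i).$$
For $P(l,0)\ot V(n-l',l')=V(n-l',l')\ot P(l,0)$, note that $l+(n-l')=n-(l'-l)\<n$ and $l>n-l'$ (from $l+l'\>n+1$), so \thref{3.3} applies with first parameter $n-l'$ and second $l$, giving $l_1=n-l'$. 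The key observation here is that the ``extra'' second sum of \thref{3.3} is \emph{empty} in our regime: its range $c(n-l'+l-1)\<i\<n-l'-1$ would require $l+l'\<n-1$, which is excluded by $t\>0$. So only the first sum survives, and reindexing by $j=l'+i$ converts it to $\bigoplus_{j=l'}^{n-1}P(n+l+l'-1-2j,j)$.

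Substituting these two decompositions into the formula above, collecting terms $P(l+l'-1-2i,i)$ (which contribute with multiplicities $2m$ or $m$) and terms $P(n+l+l'-1-2i,i)$ (which contribute with multiplicities $2m$ for $c(l+l'-1)\<i\<l'-1$, and $m+1$ for $l'\<i\<n-1$), gives exactly the claimed decomposition for $m$ odd. For $m$ even the same argument works, with the multiplicities $m$ and $m+1$ interchanged between the two families, which is precisely what the expressions $m+\frac{1\pm(-1)^m}{2}$ encode. I expect the main obstacle to be not conceptual but bookkeeping: one must carefully verify that the emptiness of the second sum of \thref{3.3} holds throughout the range and that the reindexing $j=l'+i$ lines up the upper and lower bounds with those appearing in the stated corollary; a dimension check ($\dim=2l\cdot(2m+1)n$ on both sides) is a useful sanity verification once the combinatorics are set up.
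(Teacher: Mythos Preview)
Your proposal is correct and follows essentially the same approach as the paper's own proof: both reduce via the split exact sequences coming from $P(l,0)\otimes(-)$ to computing $P(l,0)\otimes V(l',0)$ and $P(l,0)\otimes V(n-l',l')$, invoke \thref{3.5} for the former and \thref{3.3} for the latter, and then assemble the multiplicities exactly as in the proof of \prref{4.1}. Your explicit verification that the second sum of \thref{3.3} is empty in this regime and the reindexing $j=l'+i$ match the paper's computation line for line.
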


\begin{proof}
By Theorem \ref{3.5}, we have
$$\begin{array}{rl}
P(l, 0)\ot V(l', 0)
\cong&(\oplus_{i=c(t)}^t2P(l+l'-1-2i, i))\\
&\oplus(\oplus_{i=t+1}^{l-1}P(l+l'-1-2i, i))\\
&\oplus(\oplus_{c(l+l'-1)\<i\<l'-1}2P(n+l+l'-1-2i, i)).\\
\end{array}$$
By $2\<l\<l'<n$ and $l+l'\>n+1$, we have $1\<n-l'<l<n$ and $l+(n-l')\<n$.
Hence by Theorem \ref{3.3}, we have
$$\begin{array}{rl}
P(l,0)\ot V(n-l',l')
\cong&\oplus_{i=0}^{n-l'-1}P(n+l-l'-1-2i, l'+i)\\
\cong&\oplus_{i=l'}^{n-1}P(n+l+l'-1-2i, i).\\
\end{array}$$
Then the corollary follows from the proof of Proposition \ref{4.1}.
\end{proof}

\begin{corollary}\label{4.3}  Let $1\<l'<l<n$ and $r, r'\in{\mathbb Z}_n$. Assume that $l+l'\<n$.
Then for all $m\>1$,
$$\begin{array}{rl}
&P(l, r)\ot\O^{\pm m}V(l', r')\\
\cong &(\oplus_{i=0}^{l'-1}(m+\frac{1+(-1)^m}{2})P(l+l'-1-2i, r+r'+i))\\
&\oplus(\oplus_{i=l}^{l+l'-1}(m+\frac{1-(-1)^m}{2})P(n+l+l'-1-2i, r+r'+i))\\
&\oplus(\oplus_{i=c(l+l'-1)}^{l-1}2(m+\frac{1-(-1)^m}{2})P(n+l+l'-1-2i, r+r'+i))\\
&\oplus(\oplus_{1\<i\<c(n-l-l')}2(m+\frac{1-(-1)^m}{2})P(l+l'-1+2i, r+r'-i)).\\
\end{array}$$
\end{corollary}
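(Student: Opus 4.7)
The plan is to parallel the proof of \prref{4.1}, but the hypothesis $l'<l$ combined with $l+l'\leqslant n$ now forces $l+(n-l')>n$; consequently $P(l,0)\ot V(n-l',l')$ must be computed by \thref{3.5} rather than by \thref{3.3}, and this is exactly what produces the extra ``wrapped'' family indexed by $1\leqslant i\leqslant c(n-l-l')$ in the statement.

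First, by \coref{3.4} and \prref{3.6}, tensoring with $V(1,r+r')$ reduces everything to $r=r'=0$. Then, assuming $m$ odd (the even case is analogous with the roles of $V(l',0)$ and $V(n-l',l')$ interchanged), I apply $P(l,0)\ot-$ to the two short exact sequences
\begin{equation*}
0\to mV(l',0)\to\O^mV(l',0)\to(m+1)V(n-l',l')\to 0,
\end{equation*}
\begin{equation*}
0\to(m+1)V(n-l',l')\to\O^{-m}V(l',0)\to mV(l',0)\to 0.
\end{equation*}
Since every tensor product with $P(l,0)$ is projective (hence injective), both sequences split, giving
\begin{equation*}
P(l,0)\ot\O^{\pm m}V(l',0)\cong m\bigl(P(l,0)\ot V(l',0)\bigr)\oplus(m+1)\bigl(P(l,0)\ot V(n-l',l')\bigr).
\end{equation*}

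Next I compute the two tensor products. By commutativity and \thref{3.3} applied to $V(l',0)\ot P(l,0)$ (with $l_1=l'$; its secondary sum is empty because $l'<l$ forces $l'-1<c(l+l'-1)$), one obtains $P(l,0)\ot V(l',0)\cong\oplus_{i=0}^{l'-1}P(l+l'-1-2i,i)$. For the other piece, $(n-l')+l-(n+1)=l-l'-1\geqslant 0$ and $l_1=\min\{n-l',l\}=l$ (using $l\leqslant n-l'$), so \thref{3.5} applies and yields three sums, with $i$ ranging over $[c(l-l'-1),l-l'-1]$, $[l-l',l-1]$, and $[c(n+l-l'-1),n-l'-1]$ respectively. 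Substituting $j=l'+i$ in the first two converts them into $\oplus_{c(l+l'-1)\leqslant i\leqslant l-1}2P(n+l+l'-1-2i,i)$ and $\oplus_{i=l}^{l+l'-1}P(n+l+l'-1-2i,i)$, matching the statement. For the third sum, the substitution $k=n-l'-i$ turns the second index into $-k\in\mathbb{Z}_n$ and the first index into $l+l'-1+2k$, with $k$ running over $1\leqslant k\leqslant c(n-l-l')$; the range is obtained from the parity identity $n-l'-c(n+l-l'-1)=c(n-l-l')$ verified case by case on the parity of $n-l-l'$.

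Assembling these three families with multiplicities $m$ and $m+1$ then reproduces the claimed decomposition for odd $m$. The even case is entirely parallel: the analogous exact sequences interchange $V(l',0)$ and $V(n-l',l')$, so the multiplicities become $m+1$ and $m$, and the two regimes together match the prefactors $m+\frac{1+(-1)^m}{2}$ and $m+\frac{1-(-1)^m}{2}$. I expect the main technical obstacle to be the third reindexing: one must carefully verify that the ``high-end'' $i$-summands produced by \thref{3.5} genuinely translate, after the change of variables $k=n-l'-i$ and reduction modulo $n$ in the second index, into the summands $2P(l+l'-1+2i,-i)$ with the correct bound $c(n-l-l')$, paying attention to the boundary regimes $l+l'\in\{n-1,n\}$ where the third family from \thref{3.5} just barely appears or vanishes.
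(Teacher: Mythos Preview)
Your proposal is correct and follows exactly the approach the paper intends: the paper's own proof is the single line ``It is similar to Corollary~\ref{4.2}'', and what you have written is precisely that argument spelled out---split the exact sequences after tensoring with $P(l,0)$, then compute $P(l,0)\otimes V(l',0)$ via \thref{3.3} (with the secondary sum empty) and $P(l,0)\otimes V(n-l',l')$ via \thref{3.5}, reindexing as you describe. Your verification of the identity $n-l'-c(n+l-l'-1)=c(n-l-l')$ is correct, so the ``obstacle'' you flag is already handled.
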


\begin{proof}
It is similar to Corollary \ref{4.2}.
\end{proof}

\begin{corollary}\label{4.4}  Let $2\<l'<l<n$ and $r, r'\in{\mathbb Z}_n$. Assume that
$t=l+l'-(n+1)\>0$. Then for all $m\>1$,
$$\begin{array}{rl}
&P(l, r)\ot\O^{\pm m}V(l', r')\\
\cong&(\oplus_{i=c(t)}^{t}2(m+\frac{1+(-1)^m}{2})P(l+l'-1-2i, r+r'+i))\\
&\oplus(\oplus_{i=t+1}^{l'-1}(m+\frac{1+(-1)^m}{2})P(l+l'-1-2i, r+r'+i))\\
&\oplus(\oplus_{i=l}^{n-1}(m+\frac{1-(-1)^m}{2})P(n+l+l'-1-2i, r+r'+i))\\
&\oplus(\oplus_{i=c(l+l'-1)}^{l-1}2(m+\frac{1-(-1)^m}{2})P(n+l+l'-1-2i, r+r'+i)).\\
\end{array}$$
\end{corollary}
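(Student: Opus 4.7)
The plan is to mirror the proof of Proposition 4.1 and Corollaries 4.2, 4.3, reducing the computation to tensor products $P(l, r) \otimes V$ for the two composition factors $V$ of $\Omega^{\pm m} V(l', r')$. From the description of the syzygies recalled in Section 2, each of the four cases (sign of $\pm m$ combined with parity of $m$) gives a short exact sequence whose outer terms are $\alpha$ copies of $V(l', r')$ and $\beta$ copies of $V(n-l', r'+l')$ (in some order), where $\alpha = m + \frac{1+(-1)^m}{2}$ and $\beta = m + \frac{1-(-1)^m}{2}$. Tensoring by $P(l, r)$ preserves exactness, and by Corollary 3.2 both outer terms become projective, so every such sequence splits. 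Consequently, in every case
$$P(l, r) \otimes \Omega^{\pm m} V(l', r') \cong \alpha\bigl(P(l, r) \otimes V(l', r')\bigr) \oplus \beta\bigl(P(l, r) \otimes V(n-l', r'+l')\bigr).$$

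Both factors fall under Theorem 3.5. For $V(l', r') \otimes P(l, r)$ the parameters $l' < l < n$ and $t = l + l' - (n+1) \geqslant 0$ are valid, with $l_1 = l'$. The third summand of Theorem 3.5 then vanishes, because its range $c(l+l'-1) \leqslant i \leqslant l'-1$ is empty whenever $l > l'$ (indeed $c(l+l'-1) = \lfloor (l+l')/2 \rfloor \geqslant l'$). Hence this factor contributes only the two summands $(\bigoplus_{i=c(t)}^{t} 2P(l+l'-1-2i, r+r'+i)) \oplus (\bigoplus_{i=t+1}^{l'-1} P(l+l'-1-2i, r+r'+i))$, which after multiplication by $\alpha$ yield the first two summands of the claim.

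For $V(n-l', r'+l') \otimes P(l, r)$, the hypothesis $l + l' \geqslant n + 1$ gives $n - l' < l < n$ and a new parameter $t' = l - l' - 1 \geqslant 0$, so Theorem 3.5 applies with $l_1 = n - l'$. Here too the third summand vanishes by the same index inequality. The surviving two summands are indexed by $c(l-l'-1) \leqslant i \leqslant l-l'-1$ and $l-l' \leqslant i \leqslant n-l'-1$, and the substitution $j = i + l'$, using $c(l-l'-1) + l' = \lfloor (l+l')/2 \rfloor = c(l+l'-1)$, reindexes them into $(\bigoplus_{j=c(l+l'-1)}^{l-1} 2P(n+l+l'-1-2j, r+r'+j)) \oplus (\bigoplus_{j=l}^{n-1} P(n+l+l'-1-2j, r+r'+j))$. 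Multiplying by $\beta$ produces the last two summands of the claim.

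The main obstacle is purely index bookkeeping: one must verify that the two ``last'' ranges in the respective invocations of Theorem 3.5 both vanish under $2 \leqslant l' < l < n$ and $t \geqslant 0$, so that the two factors contribute to disjoint families of indecomposable summands (the first to $P(l+l'-1-2i, \cdot)$ only, the second to $P(n+l+l'-1-2i, \cdot)$ only), and that the endpoint identities needed for the $j = i + l'$ reindexing hold. Once these are in hand, the desired decomposition assembles by direct comparison with no merging of terms.
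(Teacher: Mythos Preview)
Your proof is correct and follows exactly the approach the paper indicates: as in Proposition~4.1 and Corollary~4.2, you tensor the exact sequence for $\Omega^{\pm m}V(l',r')$ by $P(l,r)$, note that it splits, and then decompose the two projective pieces. The only difference from Corollary~4.2 is that here \emph{both} pieces $P(l,r)\otimes V(l',r')$ and $P(l,r)\otimes V(n-l',r'+l')$ fall under Theorem~3.5 (rather than one under Theorem~3.3), which you correctly identify and handle; your verifications that the third summand of Theorem~3.5 is empty in each application and your reindexing $j=i+l'$ are exactly the bookkeeping the paper suppresses behind the phrase ``similar to Corollary~4.2 by using Theorem~3.5.''
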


\begin{proof}
It is similar to Corollary \ref{4.2} by using Theorem \ref{3.5}.
\end{proof}

\begin{proposition}\label{4.5}
Let $1\<l\<l'<n$ and $r, r'\in{\mathbb Z}_n$. Assume $l+l'\<n$.
Then
$$\begin{array}{rl}
P(l, r)\ot P(l', r')
\cong &(\oplus_{i=0}^{l-1}2P(l+l'-1-2i, r+r'+i))\\
&\oplus(\oplus_{i=l'}^{l'+l-1}2P(n+l+l'-1-2i, r+r'+i))\\
&\oplus(\oplus_{c(l'+l-1)\<i\<l'-1}4P(n+l+l'-1-2i, r+r'+i))\\
&\oplus(\oplus_{1\<i\<c(n-l-l')}4P(l+l'-1+2i, r+r'-i)).\\
\end{array}$$
\end{proposition}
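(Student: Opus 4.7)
The plan is to reduce the tensor product $P(l,r)\ot P(l',r')$ to tensor products already handled in Theorem \ref{3.3}, by exploiting the fact that tensoring with a projective is exact and lands in projectives. Concretely, from the Loewy structure of $P(l',r')$ recalled in \seref{2}, the composition factors of $P(l',r')$ are $V(l',r')$ with multiplicity $2$ and $V(n-l',r'+l')$ with multiplicity $2$. Filtering $P(l',r')$ by its socle series and tensoring with $P(l,r)$ produces a filtration of $P(l,r)\ot P(l',r')$ whose successive quotients are $P(l,r)\ot V(l',r')$ (twice) and $P(l,r)\ot V(n-l',r'+l')$ (twice). Each such factor is projective (hence injective) by the Fundamental Theorem of Hopf Modules, so every short exact sequence in the filtration splits, giving
$$P(l,r)\ot P(l',r')\ \cong\ 2\bigl(P(l,r)\ot V(l',r')\bigr)\ \oplus\ 2\bigl(P(l,r)\ot V(n-l',r'+l')\bigr).$$

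Next I evaluate each of these two tensor products using \thref{3.3} (via commutativity $M\ot N\cong N\ot M$). For $V(l',r')\ot P(l,r)$, the hypotheses apply with simple index $l'$ and projective index $l$; since $l\<l'$, we have $l_1=\min(l',l)=l$, the first sum runs $0\<i\<l-1$ producing $\oplus_{i=0}^{l-1}P(l+l'-1-2i,r+r'+i)$, and the second sum runs $c(l+l'-1)\<i\<l'-1$ producing $\oplus 2P(n+l+l'-1-2i,r+r'+i)$. For $V(n-l',r'+l')\ot P(l,r)$, the hypothesis $(n-l')+l\<n$ holds, $l_1=l$, and the first sum
$\oplus_{i=0}^{l-1}P(n+l-l'-1-2i,r+r'+l'+i)$ is reindexed by $j=i+l'$ into $\oplus_{j=l'}^{l+l'-1}P(n+l+l'-1-2j,r+r'+j)$.

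The delicate step is the reindexing of the second sum of this latter product, $\oplus_{c(n+l-l'-1)\<i\<n-l'-1}2P(2n+l-l'-1-2i,r+r'+l'+i)$, into the form $\oplus_{1\<j\<c(n-l-l')}2P(l+l'-1+2j,r+r'-j)$ appearing in the stated proposition. Under the substitution $j=n-l'-i$, the module index $2n+l-l'-1-2i$ becomes $l+l'-1+2j$ and the twist $r+r'+l'+i$ becomes $r+r'-j$ in $\mathbb Z_n$; the upper bound $n-l'-c(n+l-l'-1)$ must be checked (by a short parity case-distinction, using that $n+l-l'$ and $n-l-l'$ have the same parity) to equal $c(n-l-l')$. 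I expect this bookkeeping between $c(\cdot)$ values and the various ranges to be the main, though routine, obstacle; the empty-range convention stated before \prref{3.1} handles boundary cases such as $l+l'=n$ uniformly.

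Finally, doubling each of the four summand families and collecting them yields exactly the decomposition
$$\begin{array}{rl}
P(l,r)\ot P(l',r')\cong &\bigl(\oplus_{i=0}^{l-1}2P(l+l'-1-2i,r+r'+i)\bigr)\\
&\oplus\bigl(\oplus_{c(l+l'-1)\<i\<l'-1}4P(n+l+l'-1-2i,r+r'+i)\bigr)\\
&\oplus\bigl(\oplus_{i=l'}^{l+l'-1}2P(n+l+l'-1-2i,r+r'+i)\bigr)\\
&\oplus\bigl(\oplus_{1\<i\<c(n-l-l')}4P(l+l'-1+2i,r+r'-i)\bigr),
\end{array}$$
which is the claimed formula. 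As a consistency check one may verify that the total dimension on the right equals $\dim P(l,r)\cdot\dim P(l',r')=4n^2$, which follows from the identity $2(2nl)+2(2nl)=8nl$ compared to $2n\cdot 2n=4n^2$ after accounting for the composition factor multiplicities.
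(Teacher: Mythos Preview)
Your proof is correct and follows essentially the same approach as the paper: filter $P(l',r')$ by its composition factors, use that tensoring with a projective sends everything to projectives so the filtration splits, and then apply Theorem~\ref{3.3}. The paper's proof takes a slight shortcut by using the two-step filtration $0\to\O V(l',r')\to P(l',r')\to V(l',r')\to 0$ and invoking Proposition~\ref{4.1} (whose proof already computed $P(l,r)\ot V(l',r')$ and $P(l,r)\ot V(n-l',r'+l')$), but your direct use of the full composition series is exactly the general strategy announced at the start of \seref{4} and is arguably cleaner. (Your final dimension remark is a bit garbled---the correct count is $2\cdot 2n\cdot l'+2\cdot 2n\cdot(n-l')=4n^2$---but this side check is inessential.)
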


\begin{proof}
It is enough to show the proposition for $r=r'=0$.
By the discussion in \seref{2}, there is an exact sequence
$0\ra\O V(l', 0)\ra P(l',0)\ra V(l', 0)\ra 0$.
Applying $P(l, 0)\ot$ to the above sequences, one gets another sequence
$$0\ra P(l, 0)\ot\O V(l', 0)\ra P(l,0)\ot P(l', 0)\ra P(l, 0)\ot V(l', 0)\ra 0,$$
which is split since $P(l, 0)\ot V(l', 0)$ is projective.
Then the proposition follows from Proposition \ref{4.1} and its proof.
\end{proof}

\begin{corollary}\label{4.6}  Let $2\<l\<l'<n$ and $r, r'\in{\mathbb Z}_n$. Assume $t=l+l'-(n+1)\>0$. Then
$$\begin{array}{rl}
P(l, r)\ot P(l', r')
\cong&(\oplus_{i=c(t)}^{t}4P(l+l'-1-2i, r+r'+i))\\
&\oplus(\oplus_{i=t+1}^{l-1}2P(l+l'-1-2i, r+r'+i))\\
&\oplus(\oplus_{i=l'}^{n-1}2P(n+l+l'-1-2i, r+r'+i))\\
&\oplus(\oplus_{c(l'+l-1)\<i\<l'-1}4P(n+l+l'-1-2i, r+r'+i)).\\
\end{array}$$
\end{corollary}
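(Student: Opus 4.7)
The strategy is to imitate the proof of Proposition~\ref{4.5}, replacing its sub-critical inputs ($l+l'\<n$) by the super-critical counterparts valid for $t\>0$. As in the proofs of Theorem~\ref{3.5} and Corollary~\ref{4.2}, the reduction to $r=r'=0$ is carried out by repeated applications of Corollary~\ref{3.4}; once $r=r'=0$, everything that follows is a purely structural computation.

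Apply the exact functor $P(l, 0)\ot-$ to the canonical short exact sequence
$$0\ra \O V(l', 0)\ra P(l', 0)\ra V(l', 0)\ra 0$$
to obtain
$$0\ra P(l, 0)\ot \O V(l', 0)\ra P(l, 0)\ot P(l', 0)\ra P(l, 0)\ot V(l', 0)\ra 0.$$
Since $P(l,0)\ot V(l',0)$ is projective (any tensor product with a projective module is projective), and projective modules over the symmetric algebra $H_n(1, q)$ are injective, this sequence splits. Hence
$$P(l, 0)\ot P(l', 0)\cong (P(l, 0)\ot V(l', 0))\oplus (P(l, 0)\ot \O V(l', 0)).$$

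The first summand is a tensor product of a simple module with a projective module, handled by Theorem~\ref{3.5}; one invokes the commutativity of $\ot$ (valid because $H_n(1,q)$ is quasitriangular) to view it as $V(l', 0)\ot P(l, 0)$. After this swap $\min\{l',l\}=l$, so the middle block of Theorem~\ref{3.5} extends only up to $i=l-1$, while the last block, whose range is governed by the length of the simple factor (now $V(l',0)$), extends up to $i=l'-1$. The second summand is exactly the $m=1$ instance of Corollary~\ref{4.2}; specialising the universal coefficients $m+\frac{1+(-1)^m}{2}$ and $m+\frac{1-(-1)^m}{2}$ at $m=1$ gives $1$ and $2$, respectively.

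Adding the two decompositions block-by-block and invoking the Krull--Schmidt theorem yields the claimed formula: on $c(t)\<i\<t$ the multiplicity is $2+2=4$, on $t+1\<i\<l-1$ it is $1+1=2$, on $c(l+l'-1)\<i\<l'-1$ it is $2+2=4$, and on $l'\<i\<n-1$ the coefficient $2$ is supplied solely by the $\O V$ summand. The principal bookkeeping obstacle is matching the four index ranges produced by the commutativity-swap in the first summand against those already present in Corollary~\ref{4.2}; a final dimension comparison on both sides of the asserted isomorphism confirms that no extra projective summand has been overlooked.
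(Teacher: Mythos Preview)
Your proof is correct and follows essentially the same route as the paper's, which simply says the argument is similar to Proposition~\ref{4.5} using Corollary~\ref{4.2} and its proof: you tensor the short exact sequence $0\ra\O V(l',0)\ra P(l',0)\ra V(l',0)\ra 0$ by $P(l,0)$, split it, compute $P(l,0)\ot V(l',0)$ via Theorem~\ref{3.5} (exactly as in the proof of Corollary~\ref{4.2}) and $P(l,0)\ot\O V(l',0)$ via Corollary~\ref{4.2} at $m=1$, and add the multiplicities. The only superfluous remark is the closing dimension check: since the sequence splits you already have an exact isomorphism, so no projective summand could possibly be missing.
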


\begin{proof}
It is similar to Proposition \ref{4.5}, by using Corollary \ref{4.2} and its proof.
\end{proof}

\begin{proposition}\label{4.7}  Let $1\<l, l'<n$, $r, r'\in{\mathbb Z}_n$, $\eta\in\mathbb{P}^1(k)$
and $s\>1$. Assume that $l+l'\<n$. Let $l_1={\rm min}\{l, l'\}$ and $l_2={\rm max}\{l, l'\}$. Then
$$\begin{array}{rl}
P(l, r)\ot M_s(l', r', \eta)
\cong &(\oplus_{i=0}^{l_1-1}sP(l+l'-1-2i, r+r'+i))\\
&\oplus(\oplus_{i=l_2}^{l+l'-1}sP(n+l+l'-1-2i, r+r'+i))\\
&\oplus(\oplus_{c(l+l'-1)\<i\<l_2-1}2sP(n+l+l'-1-2i, r+r'+i))\\
&\oplus(\oplus_{1\<i\<c(n-l-l')}2sP(l+l'-1+2i, r+r'-i)).\\
\end{array}$$
\end{proposition}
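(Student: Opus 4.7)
The plan is to reduce the problem to tensor products of $P(l,r)$ with the simple modules sitting in the socle and top of the band module, and then invoke \thref{3.3} and \thref{3.5}. Since $M_s(l',r',\eta)$ has Loewy length $2$ and is of $(s,s)$-type, the description of band modules in \seref{2} (as a submodule of $sP(l',r')$ with socle $sV(l',r')$ and as a quotient of $sP(n-l',r'+l')$) provides a short exact sequence
$$0\to sV(l',r')\to M_s(l',r',\eta)\to sV(n-l',r'+l')\to 0.$$
Tensoring with $P(l,r)$, which is projective, yields a short exact sequence whose outer terms are projective (using the fact, recalled in \coref{3.2}, that $P(l,r)\otimes N$ is projective for any $N$). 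Hence the middle term splits as
$$P(l,r)\otimes M_s(l',r',\eta)\cong sP(l,r)\otimes V(l',r')\oplus sP(l,r)\otimes V(n-l',r'+l').$$

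Next, I would evaluate each summand. The first factor $P(l,r)\otimes V(l',r')$ falls in the hypothesis $l+l'\leq n$ of \thref{3.3} and is handled directly. For the second factor, the relevant sum of lengths is $l+(n-l')$, and since $l+l'\leq n$ we have $l\leq n-l'$; thus we split into two cases according to whether $l\leq l'$ (equivalently $l+(n-l')\leq n$, so \thref{3.3} applies with $\min\{l,n-l'\}=l$) or $l>l'$ (equivalently $l+(n-l')\geq n+1$, so \thref{3.5} applies with $t=l-l'-1\geq 0$ and $\min\{l,n-l'\}=l$).

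After applying the relevant theorem I would perform the index substitution $j=l'+i$ in the summands coming from $V(n-l',r'+l')$, which converts the projective indices from $(n-l')+l-1-2i$ to $n+l+l'-1-2j$ and the character shifts from $r'+l'+r+i$ to $r+r'+j$. For the sub-sum whose first index formally exceeds $n$ (i.e.\ $2n+l+l'-1-2j$), a further substitution $k=n-j$ rewrites it as $P(l+l'-1+2k,r+r'-k)$, with $k$ running from $1$ to $c(n-l-l')$ (matching the identity $n-l'-\lfloor(n-l'+l)/2\rfloor=c(n-l-l')$ that one verifies by parity).

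The main obstacle will be the bookkeeping in \textbf{Step 4}: showing that the two cases $l\leq l'$ and $l>l'$ produce the single unified expression indexed by $l_1=\min\{l,l'\}$ and $l_2=\max\{l,l'\}$. In the case $l\leq l'$ the second summand contributes an unmultiplied piece with $i\in\{l',\dots,l+l'-1\}$, while the first summand contributes a doubled piece with $c(l+l'-1)\leq i\leq l'-1$; in the case $l>l'$ these roles are reversed between the two summands (the doubled piece $c(l+l'-1)\leq i\leq l-1$ now comes from the $V(n-l',r'+l')$ factor via \thref{3.5}, while the unmultiplied piece with $i\in\{l,\dots,l+l'-1\}$ comes from the same factor). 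A careful check of the index ranges $c(l+l'-1)\leq i\leq l_2-1$ and $l_2\leq i\leq l+l'-1$ in both cases, together with the parity-based identification of $c(\cdot)$, will complete the proof.
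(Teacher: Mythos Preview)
Your proposal is correct and follows essentially the same approach as the paper: tensor the socle-series short exact sequence $0\to sV(l',r')\to M_s(l',r',\eta)\to sV(n-l',r'+l')\to 0$ with $P(l,r)$, observe that the resulting sequence splits by projectivity, and then evaluate the two pieces via Theorems~\ref{3.3} and~\ref{3.5} exactly as in the proof of Proposition~\ref{4.1}. The paper simply says ``similar to the proof of Proposition~\ref{4.1}'' for the index bookkeeping you spell out in Step~4, so your extra case analysis is just an explicit version of what the paper leaves to the reader.
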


\begin{proof}
It is enough to show the proposition for $r=r'=0$.
By the structure of $M_s(l', 0, \eta)$, we have the following exact sequence
$$0\ra sP(l, 0)\ot V(l', 0)\ra P(l,0)\ot M_s(l', 0, \eta)\ra sP(l,0)\ot V(n-l', l')\ra 0,$$
which is split as pointed out before.
Then the proposition follows from an argument similar to the proof of Proposition \ref{4.1}.
\end{proof}

\begin{corollary}\label{4.8}  Let $2\<l, l'<n$, $r, r'\in{\mathbb Z}_n$, $\eta\in\mathbb{P}^1(k)$
and $s\>1$. Assume that $t=l+l'-(n+1)\>0$. Let $l_1={\rm min}\{l, l'\}$ and $l_2={\rm max}\{l, l'\}$. Then
$$\begin{array}{rl}
P(l, r)\ot M_s(l', r', \eta)
\cong&(\oplus_{i=c(t)}^{t}2sP(l+l'-1-2i, r+r'+i))\\
&\oplus(\oplus_{i=t+1}^{l_1-1}sP(l+l'-1-2i, r+r'+i))\\
&\oplus(\oplus_{i=l_2}^{n-1}sP(n+l+l'-1-2i, r+r'+i))\\
&\oplus(\oplus_{c(l+l'-1)\<i\<l_2-1}2sP(n+l+l'-1-2i, r+r'+i)).\\
\end{array}$$
\end{corollary}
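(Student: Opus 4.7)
The plan is to mimic the strategy used in Proposition \ref{4.5} and Proposition \ref{4.7}: exploit the characteristic short exact sequence arising from the socle and top of the band module, and use the fact that tensoring with a projective stays in the projective class. From the structure of $M_s(l',r',\eta)$ recalled in \seref{2}, we have an exact sequence
\begin{equation*}
0 \to sV(l',r') \to M_s(l',r',\eta) \to sV(n-l',r'+l') \to 0.
\end{equation*}
Tensoring with the projective module $P(l,r)$ yields
\begin{equation*}
0 \to sP(l,r)\ot V(l',r') \to P(l,r)\ot M_s(l',r',\eta) \to sP(l,r)\ot V(n-l',r'+l') \to 0,
\end{equation*}
which splits because both flanking terms are projective. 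Thus $P(l,r)\ot M_s(l',r',\eta)$ is isomorphic to the direct sum of $s$ copies of $P(l,r)\ot V(l',r')$ and $s$ copies of $P(l,r)\ot V(n-l',r'+l')$.

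Next, I would substitute the known decompositions of these two summands. Since $t=l+l'-(n+1)\>0$, \thref{3.5} applies to $P(l,r)\ot V(l',r')$ (note the hypothesis $l'\neq n$ is satisfied by $l'<n$). For the second summand, the relevant parameter is $l+(n-l')-(n+1)=l-l'-1$, whose sign depends on the comparison of $l$ and $l'$. If $l\leq l'$, then $l+(n-l')\leq n$ and \thref{3.3} applies; if $l>l'$, then $l+(n-l')\geq n+1$ and \thref{3.5} applies instead, using that $n-l'<l<n$. In either case the output is a direct sum of indecomposable projectives $P(\cdot,\cdot)$ whose index sets can be read off from the cited theorems.

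The final step is a Krull-Schmidt bookkeeping exercise: reindex the sums from the two contributions (using a substitution such as $j\mapsto j+l'$ on the second piece so that both families are expressed in terms of $P(l+l'-1-2i,r+r'+i)$ and $P(n+l+l'-1-2i,r+r'+i)$), then collect doubled summands in the overlap ranges $c(t)\<i\<t$ and $c(l+l'-1)\<i\<l_2-1$. Writing the answer in terms of $l_1=\min\{l,l'\}$ and $l_2=\max\{l,l'\}$ unifies the two subcases $l\<l'$ and $l>l'$ in a single clean formula matching the statement.

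The main obstacle is purely combinatorial: verifying that the re-indexing matches the stated ranges and that the multiplicities (single versus doubled) in the overlap zones combine correctly into the factor $2s$ in the first and fourth summands of the target formula. No new module-theoretic input is needed beyond Theorems \ref{3.3} and \ref{3.5} and the splitting of the short exact sequence above.
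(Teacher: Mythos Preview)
Your proposal is correct and follows essentially the same approach as the paper: the paper simply notes that the proof is similar to Proposition~\ref{4.7}, which in turn tensors the socle-top exact sequence of $M_s(l',r',\eta)$ by the projective $P(l,r)$, splits it, and then invokes the decompositions of $P(l,r)\ot V(l',r')$ and $P(l,r)\ot V(n-l',r'+l')$ from Theorems~\ref{3.3} and \ref{3.5}. Your case analysis on the sign of $l-l'$ and the reindexing to unify the formula via $l_1,l_2$ are exactly the bookkeeping the paper leaves implicit.
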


\begin{proof}
It is similar to Proposition \ref{4.7}.
\end{proof}

\section{\bf Tensor product of two modules with Loewy length 2}\selabel{5}

In this section, we determine the tensor product of two non-simple non-projective indecomposable modules.
We will first consider the tensor product of two string modules.

\subsection{Tensor product of two string modules}\selabel{5.1}
~~

In this subsection, we determine the tensor product $\O^mV(l,r)\ot\O^sV(l',r)$ of two string modules,
where $m,s\in\mathbb Z$. By \cite[p.438]{EGST},
$\O^mV(l,r)\ot\O^sV(l',r)\cong\O^{m+s}(V(l,r)\ot V(l',0))\oplus P$
for some projective module $P$.  The first term on the right side of the isomorphism
is easily determined by Proposition \ref{3.1}.
However, the projective summand $P$ is not easy to determine in general.
For $m\>0$, we determine the tensor product by the induction on $m$ through the exact sequence
$$\begin{array}{rl}
0\ra\O^{m+1}V(l,r)\ot\O^sV(l',r')&\ra P(\O^{m}V(l,r))\ot\O^sV(l',r')\\
&\ra\O^{m}V(l,r)\ot\O^sV(l',r')\ra 0.\\
\end{array}$$
Here we use the following Lemma \ref{5.1}, which is obvious,
and the fact that $\O(M\oplus P)\cong\O M$ for any module
$M$ and projective module $P$.
Then applying the duality $(-)^*$, one achieves the
corresponding decompositions for $m<0$.

\begin{lemma}\label{5.1}
Let $0\ra N\ra P\ra M\ra 0$ be an exact sequence of modules over a finite dimensional algebra $A$,
where $P$ is projective. Then $P\cong P(M)\oplus Q$ for some submodule $Q$ of $P$.
Moreover, $Q$ is unique up to isomorphism, and $N\cong \O M\oplus Q$.
\end{lemma}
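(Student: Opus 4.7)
The plan is to use the projective cover of $M$ to split the given exact sequence in a canonical way.

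First I would take the projective cover $\pi: P(M)\ra M$. Since $P$ is projective and $p: P\ra M$ is surjective, there exists a lift $f: P\ra P(M)$ with $\pi\circ f=p$. The first key step is to show that $f$ is surjective. This uses the defining property of a projective cover: $\ker(\pi)\subseteq{\rm rad}(P(M))$, so $\ker(\pi)$ is superfluous in $P(M)$. From $p=\pi\circ f$ surjective we get $\pi({\rm Im}(f))=M$, hence ${\rm Im}(f)+\ker(\pi)=P(M)$, and the superfluousness of $\ker(\pi)$ forces ${\rm Im}(f)=P(M)$.

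Next, since $P(M)$ is projective, the short exact sequence $0\ra Q\ra P\stackrel{f}{\ra}P(M)\ra 0$ with $Q:=\ker(f)$ splits, yielding $P\cong P(M)\oplus Q$ with $Q$ a submodule of $P$. Using this splitting and the identity $p=\pi\circ f$, the kernel $N=\ker(p)$ decomposes as $\ker(\pi)\oplus Q=\O M\oplus Q$, which gives the second assertion. Uniqueness of $Q$ up to isomorphism is then immediate from the Krull--Schmidt theorem applied to the two decompositions $P\cong P(M)\oplus Q\cong P(M)\oplus Q'$ and cancellation of the common summand $P(M)$.

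There is no real obstacle: the only point requiring care is the surjectivity of $f$, which is the place where the hypothesis ``projective cover'' (as opposed to an arbitrary projective surjection) is used. Everything else is a formal consequence of splitting and Krull--Schmidt.
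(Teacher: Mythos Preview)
Your argument is correct and is the standard one. The paper does not actually give a proof of this lemma; it simply introduces it with the phrase ``which is obvious'' and moves on. So there is nothing to compare, and your write-up supplies exactly the routine details the authors omitted.
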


\begin{proposition}\label{5.2}
Let $1\<l\<l'<n$, $r, r'\in{\mathbb Z}_n$, $m\>0$ and $s\>1$. Assume that $l+l'\<n$, and
let $m_1={\rm min}\{m, s\}$ and $m_2={\rm max}\{m, s\}$. Let $P$ be the module
$$\begin{array}{l}
(\oplus_{c(l'+l-1)\<i\<l'-1}(m+\frac{1-(-1)^m}{2})(s+\frac{1+(-1)^s}{2})P(n+l+l'-1-2i, r+r'+i))\\
\oplus(\oplus_{1\<i\<c(n-l-l')}(m+\frac{1-(-1)^m}{2})(s+\frac{1-(-1)^s}{2})P(l+l'-1+2i, r+r'-i)).\\
\end{array}$$
$(1)$ If $m+s$ is even, then
$$\begin{array}{rl}
\O^{m}V(l, r)\ot\O^sV(l', r')
\cong&(\oplus_{i=0}^{l-1}\O^{m+s}V(l+l'-1-2i, r+r'+i))\\
&\oplus(\oplus_{i=0}^{l-1}msP(l+l'-1-2i, r+r'+i))\oplus P\\
\end{array}$$
and
$$\begin{array}{rl}
&\O^mV(l, r)\ot\O^{-s}V(l', r')\\
\cong&(\oplus_{i=0}^{l-1}\O^{m-s}V(l+l'-1-2i, r+r'+i))\\
&\oplus(\oplus_{i=l'}^{l+l'-1}m_1(m_2+1)P(n+l+l'-1-2i, r+r'+i))\oplus P.\\
\end{array}$$
$(2)$ If $m+s$ is odd, then
$$\begin{array}{rl}
\O^mV(l, r)\ot\O^sV(l', r')
\cong&(\oplus_{i=0}^{l-1}\O^{m+s}V(l+l'-1-2i, r+r'+i))\\
&\oplus(\oplus_{i=l'}^{l+l'-1}msP(n+l+l'-1-2i, r+r'+i))\oplus P\\
\end{array}$$
and
$$\begin{array}{rl}
\O^mV(l, r)\ot\O^{-s}V(l', r')
\cong&(\oplus_{i=0}^{l-1}\O^{m-s}V(l+l'-1-2i, r+r'+i))\\
&\oplus(\oplus_{i=0}^{l-1}m_1(m_2+1)P(l+l'-1-2i, r+r'+i))\oplus P.\\
\end{array}$$
\end{proposition}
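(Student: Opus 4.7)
The plan is to proceed by induction on $m\geq 0$, with the base case $m=0$ supplied by Proposition 3.6 (which handles both $+s$ and $-s$). For the inductive step, I would tensor the standard short exact sequence
$$0\to \Omega^{m+1}V(l,r)\to P(\Omega^m V(l,r))\to \Omega^m V(l,r)\to 0$$
with $\Omega^{\pm s}V(l',r')$, yielding a short exact sequence whose middle term is projective. From the minimal projective resolution recalled in Section 2, $P(\Omega^m V(l,r))\cong (m+1)P(l,r)$ for $m$ even and $P(\Omega^m V(l,r))\cong (m+1)P(n-l,r+l)$ for $m$ odd, so the middle term is a multiple of $P(l,r)\otimes\Omega^{\pm s}V(l',r')$ or $P(n-l,r+l)\otimes\Omega^{\pm s}V(l',r')$, whose decompositions are furnished by Proposition 4.1 together with Corollary 4.3 (which cover the cases $l\leq l'$ and $l'<l$ that arise when $n-l$ replaces $l$, since $l+l'\leq n$).

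By the induction hypothesis, write $\Omega^m V(l,r)\otimes\Omega^{\pm s}V(l',r')\cong X\oplus P'$, where
$$X=\Omega^{m\pm s}(V(l,r)\otimes V(l',r'))=\bigoplus_{i=0}^{l-1}\Omega^{m\pm s}V(l+l'-1-2i,r+r'+i)$$
is the non-projective part (from Proposition 3.1(1)) and $P'$ collects the projective summands asserted in the formula at step $m$. Applying Lemma 5.1 to the exact sequence above, there exists a projective module $Q$ with
$$P(\Omega^m V(l,r))\otimes\Omega^{\pm s}V(l',r')\cong P(X)\oplus P'\oplus Q,\qquad \Omega^{m+1}V(l,r)\otimes\Omega^{\pm s}V(l',r')\cong \Omega X\oplus Q.$$
Since $\Omega$ annihilates projective summands, $\Omega X=\Omega^{m+1\pm s}(V(l,r)\otimes V(l',r'))$, which supplies the non-projective part of the target at step $m+1$. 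The module $Q$ is then pinned down by Krull--Schmidt cancellation: it is the difference between the explicitly computed middle term and the sum $P(X)\oplus P'$.

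The main obstacle is the combinatorial bookkeeping. There are four parity regimes (parity of $m+s$ crossed with the sign of $\pm s$), and in each case one must verify that the cancellation produces the precise multiplicities $ms$, $(m+1)s$, or $m_1(m_2+1)$, together with the coefficient patterns $m+\tfrac{1\pm(-1)^m}{2}$, on the index ranges $c(l+l'-1)\leq i\leq l'-1$ and $1\leq i\leq c(n-l-l')$ that appear in the statement. A further subtlety is that $m_1=\min\{m,s\}$ and $m_2=\max\{m,s\}$ swap roles as $m$ passes $s$, so the induction requires a small transition argument at $m=s$. The base case $m=0$ fits seamlessly because $ms=0$ and $m_1(m_2+1)=0\cdot(s+1)=0$, collapsing the formula to Proposition 3.6.
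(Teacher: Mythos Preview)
Your approach is essentially the same as the paper's: induction on $m$ starting from Proposition~3.6, tensoring the syzygy short exact sequence with $\Omega^{\pm s}V(l',r')$, computing the projective middle term via the results of Section~4, and extracting the kernel via Lemma~5.1. The paper likewise splits into parity cases and, for $\Omega^{-s}$, into the subcases $m\leq s$ and $m>s$ to track the $m_1(m_2+1)$ multiplicity, exactly as you anticipate.

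One small correction in your bookkeeping: when the middle term involves $P(n-l,r+l)\otimes\Omega^{\pm s}V(l',r')$, Proposition~4.1 and Corollary~4.3 alone do not suffice. Since $l\leq l'$ and $l+l'\leq n$, one has $(n-l)+l'=n+(l'-l)\geq n$, so the sum of the two indices is $\geq n$ rather than $\leq n$; in fact $(n-l)+l'\leq n$ forces $l=l'$. Thus for $l<l'$ you need Corollary~4.4 (when $l+l'<n$, so that $l'<n-l$) or Corollary~4.2 (when $l+l'=n$, so that $l'=n-l$), and for $l=l'$ you need Corollary~4.3 (when $l+l'<n$) or Proposition~4.1 (when $l+l'=n$). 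The paper cites precisely this quartet. This is only a citation refinement and does not affect the structure of your argument.
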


\begin{proof}
It is enough to show the proposition
for $r=r'=0$. We prove it by the induction on $m$.
For $m=0$, it follows from Proposition \ref{3.6}.
Now let $m>0$.
We only consider the case that $m$ and $s$ are both even since the proofs are
similar for the other cases. In this case, we have an exact sequence
$$\begin{array}{rl}
0\ra\O^{m}V(l,0)\ot\O^{\pm s}V(l', 0)&\ra mP(n-l, l)\ot\O^{\pm s}V(l',0)\\
&\ra\O^{m-1}V(l, 0)\ot\O^{\pm s}V(l',0)\ra 0.\\
\end{array}$$
From $1\<l\<l'<n$ and $l+l'\<n$, one gets that $1\<l'\<n-l<n$ and $n-l+l'\>n$.
Moreover, $n-l+l'-(n+1)=l'-l-1$. Hence by Corollary \ref{4.4} together
with Proposition \ref{4.1} for $l+l'=n$ and $l=l'$,
Corollary \ref{4.2} for $l+l'=n$ and $l<l'$, and Corollary \ref{4.3} for $l+l'<n$ and $l=l'$, we have
$$\begin{array}{rl}
&mP(n-l, l)\ot\O^{\pm s}V(l', 0)\\
\cong&(\oplus_{c(l'-l-1)\<i\<l'-l-1}2m(s+1)P(n-l+l'-1-2i, l+i))\\
&\oplus(\oplus_{i=l'-l}^{l'-1}m(s+1)P(n-l+l'-1-2i, l+i))\\
&\oplus(\oplus_{i=n-l}^{n-1}msP(2n-l+l'-1-2i, l+i))\\
&\oplus(\oplus_{c(n-l+l'-1)\<i\<n-l-1}2msP(2n-l+l'-1-2i, l+i))\\
\cong&(\oplus_{c(l+l'-1)\<i\<l'-1}2m(s+1)P(n+l+l'-1-2i, i))\\
&\oplus(\oplus_{i=l'}^{l+l'-1}m(s+1)P(n+l+l'-1-2i, i))\\
&\oplus(\oplus_{i=0}^{l-1}msP(l+l'-1-2i, i))\\
&\oplus(\oplus_{1\<i\<c(n-l-l')}2msP(l+l'-1+2i, -i)).\\
\end{array}$$
Note that $m-1+s$ and $m-1$ are both odd.
By the induction hypothesis, we have
$$\begin{array}{rl}
&\O^{m-1}V(l, 0)\ot\O^sV(l', 0)\\
\cong&(\oplus_{i=0}^{l-1}\O^{m-1+s}V(l+l'-1-2i, i))\\
&\oplus(\oplus_{i=l'}^{l+l'-1}(m-1)sP(n+l+l'-1-2i, i))\\
&\oplus(\oplus_{c(l'+l-1)\<i\<l'-1}m(s+1)P(n+l+l'-1-2i, i))\\
&\oplus(\oplus_{1\<i\<c(n-l-l')}msP(l+l'-1+2i, -i)).\\
\end{array}$$
It is easy to check that $\oplus_{i=l'}^{l+l'-1}(m+s)P(n+l+l'-1-2i, i)$ is a projective cover of
$\oplus_{i=0}^{l-1}\O^{m-1+s}V(l+l'-1-2i, i)$.
Hence we have
$$\begin{array}{rl}
&mP(n-l, l)\ot\O^{s}V(l', 0)\\
\cong&P(\O^{m-1}V(l, 0)\ot\O^sV(l', 0))\\
&\oplus(\oplus_{c(l+l'-1)\<i\<l'-1}m(s+1)P(n+l+l'-1-2i, i))\\
&\oplus(\oplus_{i=0}^{l-1}msP(l+l'-1-2i, i))\\
&\oplus(\oplus_{1\<i\<c(n-l-l')}msP(l+l'-1+2i, -i)).\\
\end{array}$$
It follows from Lemma \ref{5.1} that
$$\begin{array}{rl}
\O^mV(l,0)\ot\O^sV(l', 0)
\cong&(\oplus_{i=0}^{l-1}\O^{m+s}V(l+l'-1-2i, i))\\
&\oplus(\oplus_{i=0}^{l-1}msP(l+l'-1-2i, i))\\
&\oplus(\oplus_{c(l+l'-1)\<i\<l'-1}m(s+1)P(n+l+l'-1-2i, i))\\
&\oplus(\oplus_{1\<i\<c(n-l-l')}msP(l+l'-1+2i, -i)).\\
\end{array}$$
If $s\>m$, then $s>m-1$. Hence by the induction hypothesis, we have
$$\begin{array}{rl}
&\O^{m-1}V(l, 0)\ot\O^{-s}V(l', 0)\\
\cong&(\oplus_{i=0}^{l-1}\O^{m-1-s}V(l+l'-1-2i, i))\\
&\oplus(\oplus_{i=0}^{l-1}(m-1)(s+1)P(l+l'-1-2i, i))\\
&\oplus(\oplus_{c(l+l'-1)\<i\<l'-1}m(s+1)P(n+l+l'-1-2i, i))\\
&\oplus(\oplus_{1\<i\<c(n-l-l')}msP(l+l'-1+2i, -i)).\\
\end{array}$$
In this case, $s-m+1\>1$ is odd. Hence
$\oplus_{i=0}^{l-1}(s-m+1)P(l+l'-1-2i, i)$ is a projective cover of
$\oplus_{i=0}^{l-1}\O^{m-1-s}V(l+l'-1-2i, i)$. Thus, a similar argument as above shows that
$$\begin{array}{rl}
\O^{m}V(l, 0)\ot\O^{-s}V(l', 0)
\cong&(\oplus_{i=0}^{l-1}\O^{m-s}V(l+l'-1-2i, i))\\
&\oplus(\oplus_{i=l'}^{l+l'-1}m(s+1)P(n+l+l'-1-2i, i))\\
&\oplus(\oplus_{c(l+l'-1)\<i\<l'-1}m(s+1)P(n+l+l'-1-2i, i))\\
&\oplus(\oplus_{1\<i\<c(n-l-l')}msP(l+l'-1+2i, -i)).\\
\end{array}$$
If $m>s$, then $m-1\>s$. Hence by the induction hypothesis, we have
$$\begin{array}{rl}
&\O^{m-1}V(l, 0)\ot\O^{-s}V(l', 0)\\
\cong&(\oplus_{i=0}^{l-1}\O^{m-1-s}V(l+l'-1-2i, i))\\
&\oplus(\oplus_{i=0}^{l-1}smP(l+l'-1-2i, i))\\
&\oplus(\oplus_{c(l+l'-1)\<i\<l'-1}m(s+1)P(n+l+l'-1-2i, i))\\
&\oplus(\oplus_{1\<i\<c(n-l-l')}msP(l+l'-1+2i, -i)).\\
\end{array}$$
In this case, $m-1-s\>0$ is odd. Hence $\oplus_{i=l'}^{l+l'-1}(m-s)P(n+l+l'-1-2i, i)$ is a projective cover of
$\oplus_{i=0}^{l-1}\O^{m-1-s}V(l+l'-1-2i, i)$ as above, and so similarly,
$$\begin{array}{rl}
\O^{m}V(l, 0)\ot\O^{-s}V(l', 0)
\cong&(\oplus_{i=0}^{l-1}\O^{m-s}V(l+l'-1-2i, i))\\
&\oplus(\oplus_{i=l'}^{l+l'-1}s(m+1)P(n+l+l'-1-2i, i))\\
&\oplus(\oplus_{c(l+l'-1)\<i\<l'-1}m(s+1)P(n+l+l'-1-2i, i))\\
&\oplus(\oplus_{1\<i\<c(n-l-l')}msP(l+l'-1+2i, -i)),\\
\end{array}$$
as desired. This completes the proof.
\end{proof}

\begin{corollary}\label{5.3}
Let $1\<l\<l'<n$, $r, r'\in{\mathbb Z}_n$ and $s, m\>1$. Assume that $l+l'\<n$, and
let $m_1={\rm min}\{m, s\}$ and $m_2={\rm max}\{m, s\}$. Let $P$ be the module
$$\begin{array}{l}
(\oplus_{c(l'+l-1)\<i\<l'-1}(m+\frac{1-(-1)^m}{2})(s+\frac{1+(-1)^s}{2})P(n+l+l'-1-2i, r+r'+i))\\
\oplus(\oplus_{1\<i\<c(n-l-l')}(m+\frac{1-(-1)^m}{2})(s+\frac{1-(-1)^s}{2})P(l+l'-1+2i, r+r'-i)).\\
\end{array}$$
$(1)$ If $m+s$ is even, then
$$\begin{array}{rl}
\O^{-m}V(l, r)\ot\O^{-s}V(l', r')
\cong&(\oplus_{i=0}^{l-1}\O^{-(m+s)}V(l+l'-1-2i, r+r'+i))\\
&\oplus(\oplus_{i=0}^{l-1}msP(l+l'-1-2i, r+r'+i))\oplus P\\
\end{array}$$
and
$$\begin{array}{rl}
&\O^{-m}V(l, r)\ot\O^{s}V(l', r')\\
\cong&(\oplus_{i=0}^{l-1}\O^{s-m}V(l+l'-1-2i, r+r'+i))\\
&\oplus(\oplus_{i=l'}^{l+l'-1}m_1(m_2+1)P(n+l+l'-1-2i, r+r'+i))\oplus P.\\
\end{array}$$
$(2)$ If $m+s$ is odd, then
$$\begin{array}{rl}
\O^{-m}V(l, r)\ot\O^{-s}V(l', r')
\cong&(\oplus_{i=0}^{l-1}\O^{-(m+s)}V(l+l'-1-2i, r+r'+i))\\
&\oplus(\oplus_{i=l'}^{l+l'-1}msP(n+l+l'-1-2i, r+r'+i))\oplus P\\
\end{array}$$
and
$$\begin{array}{rl}
\O^{-m}V(l, r)\ot\O^{s}V(l', r')
\cong&(\oplus_{i=0}^{l-1}\O^{s-m}V(l+l'-1-2i, r+r'+i))\\
&\oplus(\oplus_{i=0}^{l-1}m_1(m_2+1)P(l+l'-1-2i, r+r'+i))\oplus P.\\
\end{array}$$
\end{corollary}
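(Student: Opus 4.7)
The plan is to derive Corollary \ref{5.3} from Proposition \ref{5.2} purely by means of the duality functor $(-)^*$ on $\mathcal{M}$. Recall from \seref{2.1} that, because $H_n(1,q)$ is quasitriangular, $M^{**}\cong M$ for every $M\in\mathcal{M}$, $(-)^*$ commutes with finite direct sums, $(M\ot N)^*\cong N^*\ot M^*\cong M^*\ot N^*$ (the last isomorphism by commutativity of $\ot$ in $\mathcal{M}$), and Lemma \ref{3.12} records the explicit identifications
$$P(l,r)^*\cong P(l,1-l-r),\quad (\O^{\pm m}V(l,r))^*\cong \O^{\mp m}V(l,1-l-r).$$

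The strategy is to apply $(-)^*$ to each of the four decomposition formulas in Proposition \ref{5.2}, and then relabel the twist parameters. More precisely, for any of the formulas of the shape $\O^{\pm m}V(l,r)\ot\O^{\pm s}V(l',r')\cong X$ given there, I would introduce $\tilde r:=1-l-r$ and $\tilde r':=1-l'-r'$, so that, using commutativity of tensor product,
$$(\O^{\pm m}V(l,r)\ot\O^{\pm s}V(l',r'))^*\cong \O^{\mp m}V(l,\tilde r)\ot\O^{\mp s}V(l',\tilde r').$$
Dualising the right-hand side term by term and then renaming $\tilde r\mapsto r$, $\tilde r'\mapsto r'$ will produce exactly the corresponding formula in Corollary \ref{5.3}.

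The only verification required is that the various second arguments of the simple, syzygy and projective summands transform correctly. The key identity is $r+r'= 2-l-l'-\tilde r-\tilde r'$ in $\mathbb{Z}_n$, from which one checks
\begin{align*}
1-(l+l'-1-2i)-(r+r'+i) &= \tilde r+\tilde r'+i,\\
1-(n+l+l'-1-2i)-(r+r'+i) &\equiv \tilde r+\tilde r'+i \pmod{n},\\
1-(l+l'-1+2i)-(r+r'-i) &= \tilde r+\tilde r'-i.
\end{align*}
Thus each of the three types of indexed summands appearing in the right-hand sides of Proposition \ref{5.2} dualises to the same type of summand with $r+r'$ replaced by $\tilde r+\tilde r'$, while the multiplicities and the parity-controlled coefficients $(m+\tfrac{1\mp(-1)^m}{2})(s+\tfrac{1\pm(-1)^s}{2})$ are intrinsic and unaffected by dualisation.

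The hardest step is purely bookkeeping: pairing each of the four formulas in Corollary \ref{5.3} with the correct formula in Proposition \ref{5.2} (e.g.\ Corollary \ref{5.3}(1) first line comes from Proposition \ref{5.2}(1) first line, while Corollary \ref{5.3}(1) second line comes from Proposition \ref{5.2}(1) second line) and confirming that in every case the residue class of the second argument behaves as displayed. There is no representation-theoretic obstacle; the content is entirely contained in Proposition \ref{5.2} together with Lemma \ref{3.12}, which is why the statement is a corollary.
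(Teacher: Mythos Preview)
Your proposal is correct and follows exactly the paper's own approach: apply the duality $(-)^*$ to the four formulas of Proposition~\ref{5.2} and invoke Lemma~\ref{3.12}. Your write-up merely makes explicit the residue-class bookkeeping that the paper leaves to the reader.
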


\begin{proof}
Applying the duality $(-)^*$ to the isomorphisms in Proposition \ref{5.2},
the corollary follows from Lemma \ref{3.12}.
\end{proof}

\begin{proposition}\label{5.4}
Let $1\<l\<l'<n$, $r, r'\in{\mathbb Z}_n$, $m\>0$ and $s\>1$. Assume that
$t=l+l'-(n+1)\>0$, and let $m_1={\rm min}\{m, s\}$ and $m_2={\rm max}\{m, s\}$. Let
$$\begin{array}{l}
\ \ \ \ P=(\oplus_{i=c(t)}^{t}(m+\frac{1+(-1)^m}{2})(s+\frac{1+(-1)^s}{2})P(l+l'-1-2i, r+r'+i))\\
\oplus(\oplus_{c(l+l'-1)\<i\<l'-1}(m+\frac{1-(-1)^m}{2})(s+\frac{1+(-1)^s}{2})P(n+l+l'-1-2i, r+r'+i)).\\
\end{array}$$
$(1)$ If $m+s$ is even, then
$$\begin{array}{rl}
\O^mV(l, r)\ot\O^sV(l', r')
\cong&(\oplus_{i=t+1}^{l-1}\O^{m+s}V(l+l'-1-2i, r+r'+i))\\
&\oplus(\oplus_{i=t+1}^{l-1}msP(l+l'-1-2i, r+r'+i))\oplus P\\
\end{array}$$
and
$$\begin{array}{rl}
&\O^{m}V(l, r)\ot\O^{-s}V(l', r')\\
\cong&(\oplus_{i=t+1}^{l-1}\O^{m-s}V(l+l'-1-2i, r+r'+i))\\
&\oplus(\oplus_{i=l'}^{n-1}m_1(m_2+1)P(n+l+l'-1-2i, r+r'+i))\oplus P.\\
\end{array}$$
$(2)$ If $m+s$ is odd, then
$$\begin{array}{rl}
\O^mV(l, r)\ot\O^sV(l', r')
\cong&(\oplus_{i=t+1}^{l-1}\O^{m+s}V(l+l'-1-2i, r+r'+i))\\
&\oplus(\oplus_{i=l'}^{n-1}msP(n+l+l'-1-2i, r+r'+i))\oplus P\\
\end{array}$$
and
$$\begin{array}{rl}
&\O^{m}V(l, r)\ot\O^{-s}V(l', r')\\
\cong&(\oplus_{i=t+1}^{l-1}\O^{m-s}V(l+l'-1-2i, r+r'+i))\\
&\oplus(\oplus_{i=t+1}^{l-1}m_1(m_2+1)P(l+l'-1-2i, r+r'+i))\oplus P.\\
\end{array}$$
\end{proposition}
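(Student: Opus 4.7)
The plan is to follow the inductive strategy of Proposition \ref{5.2}, adapted to the regime $t = l+l'-(n+1) \geq 0$. We proceed by induction on $m \geq 0$. The base case $m=0$ is Proposition \ref{3.7}, which gives the decomposition of $V(l,r)\ot\O^{\pm s}V(l',r')$ in this regime. Note that the hypothesis $t\>0$ together with $l\<l'<n$ automatically forces $l\>2$, so that all relevant Section \seref{4} results apply.

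For the inductive step we exploit the short exact sequence
$$0\ra\O^{m}V(l,r)\ot\O^{\pm s}V(l',r')\ra P_{m-1}\ot\O^{\pm s}V(l',r')\ra\O^{m-1}V(l,r)\ot\O^{\pm s}V(l',r')\ra 0,$$
where $P_{m-1}$ is a projective cover of $\O^{m-1}V(l,r)$. From the minimal projective resolution recalled in \seref{2}, $P_{m-1}\cong mP(l,r)$ when $m$ is odd and $P_{m-1}\cong mP(n-l,r+l)$ when $m$ is even. The right-hand term is given by the inductive hypothesis. The middle term is projective and is decomposed via \seref{4}: for $m$ odd, Corollary \ref{4.2} gives $P(l,r)\ot\O^{\pm s}V(l',r')$ directly (using $2\<l\<l'<n$ and $t\>0$); for $m$ even, Corollary \ref{4.2} again gives $P(n-l,r+l)\ot\O^{\pm s}V(l',r')$ when $l<l'$ (since then $n-l<l'$ and $(n-l)+l'-(n+1)=l'-l-1\>0$), while Proposition \ref{4.1} handles the boundary case $l=l'$, where $(n-l)+l'=n$.

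Once the middle and right sides are written as explicit direct sums of indecomposables, Lemma \ref{5.1} identifies a copy of $P(\O^{m-1}V(l,r)\ot\O^{\pm s}V(l',r'))$ as a summand of the middle, with a uniquely determined projective complement $Q$, and yields $\O^{m}V(l,r)\ot\O^{\pm s}V(l',r')\cong\O(\O^{m-1}V(l,r)\ot\O^{\pm s}V(l',r'))\oplus Q$. Since $\O$ shifts the syzygy index of each non-projective summand from the inductive hypothesis up by one and annihilates projective summands, this yields the non-projective part of the desired decomposition, while $Q$ contributes the projective part. Tracking the four parity subcases of $(m,s)$ separately recovers the formulas in parts (1) and (2).

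The main obstacle is the combinatorial bookkeeping of the projective summands. The indices $l+l'-1-2i$ and $n+l+l'-1-2i$ in \seref{4} shift under the arithmetic identifications required here (replacing $l$ by $n-l$ on the first factor), and $\O^{\pm s}V(l',r')$ itself contributes composition factors from both $V(l',r')$ and $V(n-l',r'+l')$, so that $P(n-l,r+l)\ot\O^{\pm s}V(l',r')$ scatters indecomposable projectives across several index ranges. Verifying in each of the four parities of $(m,s)$ that the residual $Q$, after Krull--Schmidt cancellation with the projective cover of the inductive-hypothesis term, matches precisely the module $P$ together with the diagonal $msP(\cdot)$ or $m_1(m_2+1)P(\cdot)$ summands is tedious but routine, paralleling the verification in Proposition \ref{5.2} with the \seref{4} inputs for $t\>0$ replacing those for $l+l'\<n$.
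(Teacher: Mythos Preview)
Your proposal is correct and follows essentially the same approach as the paper, whose proof simply reads ``It is similar to Proposition~\ref{5.2}, where we use Proposition~\ref{3.7} for $m=0$.'' Your elaboration---induction on $m$ with base case Proposition~\ref{3.7}, the syzygy short exact sequence, and identifying the correct Section~\ref{se:4} input (Corollary~\ref{4.2} for $m$ odd, Corollary~\ref{4.2} or Proposition~\ref{4.1} for $m$ even according as $l<l'$ or $l=l'$)---is exactly the intended argument.
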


\begin{proof}
It is similar to Proposition \ref{5.2}, where we use Proposition \ref{3.7} for $m=0$.
\end{proof}

\begin{corollary}\label{5.5}
Let $1\<l\<l'<n$, $r, r'\in{\mathbb Z}_n$ and $s, m\>1$. Assume that $t=l+l'-(n+1)\>0$, and
let $m_1={\rm min}\{m, s\}$ and $m_2={\rm max}\{m, s\}$. Let
$$\begin{array}{l}
\ \ \ \ P=(\oplus_{i=c(t)}^{t}(m+\frac{1+(-1)^m}{2})(s+\frac{1+(-1)^s}{2})P(l+l'-1-2i, r+r'+i))\\
\oplus(\oplus_{c(l+l'-1)\<i\<l'-1}(m+\frac{1-(-1)^m}{2})(s+\frac{1+(-1)^s}{2})P(n+l+l'-1-2i, r+r'+i)).\\
\end{array}$$
$(1)$ If $m+s$ is even, then
$$\begin{array}{rl}
\O^{-m}V(l, r)\ot\O^{-s}V(l', r')
\cong&(\oplus_{i=t+1}^{l-1}\O^{-(m+s)}V(l+l'-1-2i, r+r'+i))\\
&\oplus(\oplus_{i=t+1}^{l-1}msP(l+l'-1-2i, r+r'+i))\oplus P\\
\end{array}$$
and
$$\begin{array}{rl}
&\O^{-m}V(l, r)\ot\O^{s}V(l', r')\\
\cong&(\oplus_{i=t+1}^{l-1}\O^{s-m}V(l+l'-1-2i, r+r'+i))\\
&\oplus(\oplus_{i=l'}^{n-1}m_1(m_2+1)P(n+l+l'-1-2i, r+r'+i))\oplus P.\\
\end{array}$$
$(3)$ If $m+s$ is odd, then
$$\begin{array}{rl}
\O^{-m}V(l, r)\ot\O^{-s}V(l', r')
\cong&(\oplus_{i=t+1}^{l-1}\O^{-(s+m)}V(l+l'-1-2i, r+r'+i))\\
&\oplus(\oplus_{i=l'}^{n-1}msP(n+l+l'-1-2i, r+r'+i))\oplus P\\
\end{array}$$
and
$$\begin{array}{rl}
&\O^{-m}V(l, r)\ot\O^{s}V(l', r')\\
\cong&(\oplus_{i=t+1}^{l-1}\O^{s-m}V(l+l'-1-2i, r+r'+i))\\
&\oplus(\oplus_{i=t+1}^{l-1}m_1(m_2+1)P(l+l'-1-2i, r+r'+i))\oplus P.\\
\end{array}$$
\end{corollary}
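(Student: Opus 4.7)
The plan is to obtain Corollary~\ref{5.5} from Proposition~\ref{5.4} purely by applying the duality functor $(-)^{*}$, in complete analogy with how Corollary~\ref{5.3} was deduced from Proposition~\ref{5.2}. The two key tools are the monoidal-category identities $(M\otimes N)^{*}\cong N^{*}\otimes M^{*}$ and $M\otimes N\cong N\otimes M$ (the latter because $H_{n}(1,q)$ is quasitriangular), together with Lemma~\ref{3.12}, which supplies the dualization formulas $(\Omega^{\pm m}V(l,r))^{*}\cong\Omega^{\mp m}V(l,1-l-r)$ and $P(l,r)^{*}\cong P(l,1-l-r)$. Since $(-)^{*}$ commutes with finite direct sums and preserves isomorphism classes, dualizing each of the four isomorphisms in Proposition~\ref{5.4} yields four new isomorphisms of the form $\Omega^{-s}V(l',1-l'-r')\otimes\Omega^{\mp m}V(l,1-l-r)\cong\cdots$, and after rearranging the tensor product by commutativity they become isomorphisms of the form required in Corollary~\ref{5.5}.

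The bookkeeping to be checked is the behaviour of the second index on the right-hand side under the substitution $r\mapsto 1-l-r$, $r'\mapsto 1-l'-r'$. For every summand of the type $X(l+l'-1-2i,\,r+r'+i)$, where $X$ is $V$, $\Omega^{\pm(m+s)}V$, or $P$, Lemma~\ref{3.12} yields dual second index
\[
1-(l+l'-1-2i)-(r+r'+i)=(1-l-r)+(1-l'-r')+i,
\]
which is exactly $\tilde r+\tilde r'+i$ in the new variables $\tilde r=1-l-r$, $\tilde r'=1-l'-r'$. For summands $P(n+l+l'-1-2i,\,r+r'+i)$, the dual second index equals $\tilde r+\tilde r'+i-n$, which coincides with $\tilde r+\tilde r'+i$ in $\mathbb{Z}_{n}$. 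Hence, after renaming $\tilde r,\tilde r'$ back to $r,r'$, the summation ranges $i\in[t+1,l-1]$, $i\in[c(t),t]$, $i\in[l',n-1]$ and $i\in[c(l+l'-1),l'-1]$ and the multiplicities $ms$, $m_{1}(m_{2}+1)$, $(m+\tfrac{1\pm(-1)^{m}}{2})(s+\tfrac{1+(-1)^{s}}{2})$ etc.\ appearing in Proposition~\ref{5.4} all carry over verbatim, because the parity of $m+s$ is unchanged and the parity factors $\tfrac{1\pm(-1)^{m}}{2}$ reappear exactly where needed.

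Concretely, I would execute this in the following order: first dualize the two isomorphisms of Proposition~\ref{5.4}(1) (the $m+s$ even case), use commutativity of $\otimes$ to put the $\Omega^{-m}V(l,\cdot)$ factor first, then relabel to obtain the two formulas of Corollary~\ref{5.5}(1); then repeat the same manipulation with Proposition~\ref{5.4}(2) to obtain Corollary~\ref{5.5}(2). Since Lemma~\ref{3.12} gives $(\Omega^{-s}V(l',r'))^{*}\cong\Omega^{s}V(l',1-l'-r')$, the case $\Omega^{-m}V(l,r)\otimes\Omega^{s}V(l',r')$ of Corollary~\ref{5.5} arises as the dual of $\Omega^{m}V(l,r)\otimes\Omega^{-s}V(l',r')$ in Proposition~\ref{5.4}, and vice versa.

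I expect no genuine obstacle here; the proof is one sentence, exactly like the proof of Corollary~\ref{5.3}. The only thing that has to be watched is the index bookkeeping described above, in particular the verification that the reduction modulo $n$ in the second index of $P(n+l+l'-1-2i,\cdot)$-terms works out so that both sides can be written with the uniform second index $r+r'+i$. Once that routine check is made, Corollary~\ref{5.5} is immediate from Proposition~\ref{5.4}.
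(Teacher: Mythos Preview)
Your approach is correct and is exactly the one the paper uses: the paper's proof of Corollary~\ref{5.5} consists of a single sentence applying the duality $(-)^{*}$ to the isomorphisms in Proposition~\ref{5.4} and invoking Lemma~\ref{3.12}, precisely as you describe. Your index bookkeeping is sound and simply makes explicit what the paper leaves to the reader.
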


\begin{proof}
Applying the duality $(-)^*$ to the isomorphisms in Proposition \ref{5.4}, the corollary follows from Lemmas \ref{3.12}.
\end{proof}

\subsection{Tensor product of a string module with a band module}\selabel{5.2}
~~

In this subsection, we determine the tensor product $M=\O^mV(l,r)\ot M_s(l', r', \eta)$
of a string module with a band module. Using the same method as in the last subsection
(replacing $\O^sV(l',r')$ by $M_s(l',r',\eta)$ there), we can determine $M$.

\begin{proposition}\label{5.6}
Let $1\<l\<l'<n$, $r, r'\in{\mathbb Z}_n$, $\eta\in\mathbb{P}^1(k)$, $s\>1$ and $m\>0$. Assume that $l+l'\<n$.\\
$(1)$ If $m$ is odd, then
$$\begin{array}{rl}
&\O^mV(l, r)\ot M_s(l', r', \eta)\\
\cong&(\oplus_{i=l'}^{l+l'-1}M_s(n+l+l'-1-2i, r+r'+i, -\eta q^{l'}\frac{(2i-l-l'+1)_q}{(l')_q}))\\
&\oplus(\oplus_{i=0}^{l-1}msP(l+l'-1-2i, r+r'+i))\\
&\oplus(\oplus_{c(l+l'-1)\<i\<l'-1}(m+1)sP(n+l+l'-1-2i, r+r'+i))\\
&\oplus(\oplus_{1\<i\<c(n-l-l')}(m+1)sP(l+l'-1+2i, r+r'-i)).\\
\end{array}$$
$(2)$ If $m$ is even, then
$$\begin{array}{rl}
&\O^mV(l, r)\ot M_s(l', r', \eta)\\
\cong&(\oplus_{i=0}^{l-1}M_s(l+l'-1-2i, r+r'+i, \eta q^{2i-l+1}\frac{(l+l'-1-2i)_q}{(l')_q}))\\
&\oplus(\oplus_{i=l'}^{l+l'-1}msP(n+l+l'-1-2i, r+r'+i))\\
&\oplus(\oplus_{c(l+l'-1)\<i\<l'-1}msP(n+l+l'-1-2i, r+r'+i))\\
&\oplus(\oplus_{1\<i\<c(n-l-l')}msP(l+l'-1+2i, r+r'-i)).\\
\end{array}$$
\end{proposition}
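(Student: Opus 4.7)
The plan is to argue by induction on $m \geqslant 0$, following the template of \prref{5.2} but with $\O^sV(l',r')$ replaced by $M_s(l',r',\eta)$.

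\textbf{Base case ($m=0$).} Here $\O^0V(l,r)=V(l,r)$, and part (2) applies. The hypothesis $l \leqslant l'$ forces $c(l+l'-1)\geqslant l$, so the range $c(l+l'-1)\leqslant i\leqslant l-1$ is empty; moreover all three projective sums carry the coefficient $ms=0$. Thus the claimed formula collapses to $\oplus_{i=0}^{l-1} M_s(l+l'-1-2i,r+r'+i,\eta q^{2i-l+1}\frac{(l+l'-1-2i)_q}{(l')_q})$, which is exactly the decomposition in \thref{3.16} for $l+l'\leqslant n$, $l\leqslant l'$.

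\textbf{Inductive step.} Let $m\geqslant 1$ and assume the claim for $m-1$. Use the short exact sequence
$$0\ra \O^m V(l,r)\ra P(\O^{m-1}V(l,r))\ra \O^{m-1}V(l,r)\ra 0,$$
where, reading off the top from \seref{2}, the projective cover is $mP(l,r)$ when $m$ is odd and $mP(n-l,r+l)$ when $m$ is even. Tensoring with $M_s(l',r',\eta)$ preserves exactness, and the middle term is projective by \coref{3.2}, so \leref{5.1} applies and gives
$$\O^m V(l,r)\ot M_s(l',r',\eta)\cong \O(\O^{m-1}V(l,r)\ot M_s(l',r',\eta))\oplus Q,$$
where $Q$ is the complement to $P(\O^{m-1}V(l,r)\ot M_s(l',r',\eta))$ inside $P(\O^{m-1}V(l,r))\ot M_s(l',r',\eta)$.

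By the induction hypothesis, $\O^{m-1}V(l,r)\ot M_s(l',r',\eta)$ is a direct sum of band modules $M_s(l'',r'',\eta'')$ plus projective modules. Since each $M_s(l'',r'',\eta'')$ has top $sV(n-l'',r''+l'')$, its projective cover is $sP(n-l'',r''+l'')$; and by \seref{2}, $\O M_s(l'',r'',\eta'')\cong M_s(n-l'',r''+l'',-\eta''q^{l''})$. Applying these two rules summand-by-summand computes both $P(\O^{m-1}V(l,r)\ot M_s(l',r',\eta))$ and $\O(\O^{m-1}V(l,r)\ot M_s(l',r',\eta))$. The latter, after the index substitution $i\mapsto l+l'-1-i$, yields the band summands asserted in the target formula (for $m$ odd, the twist $\eta q^{2j-l+1}\frac{(l+l'-1-2j)_q}{(l')_q}$ becomes $-\eta q^{l'}\frac{(2i-l-l'+1)_q}{(l')_q}$, and for $m$ even it returns to the $V(l,r)\ot M_s$-type shape). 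The projective summands in the induction hypothesis contribute further projective terms via $\O^{-1}$-shifts which, by the structure in \seref{2}, get absorbed into $Q$.

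Finally, $P(\O^{m-1}V(l,r))\ot M_s(l',r',\eta)$ is decomposed explicitly by \prref{4.7} (applied to $mP(l,r)\ot M_s(l',r',\eta)$ or $mP(n-l,r+l)\ot M_s(l',r',\eta)$). Subtracting the projective cover obtained above determines $Q$, and comparison with the claimed formula finishes the induction.

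\textbf{Expected obstacle.} The argument itself is a mechanical syzygy-and-cancellation scheme, so the genuine difficulty is bookkeeping. One must track (i) the twist parameter of each band module through the $\O$-action, checking that the $q$-power exponents and $q$-integer ratios combine correctly under the reindexing; (ii) the two parity cases for $m$ in parallel, since the projective cover of $\O^{m-1}V(l,r)$ switches between $P(l,r)$ and $P(n-l,r+l)$; (iii) whether the ranges $\{0,\ldots,l-1\}$, $\{l',\ldots,l+l'-1\}$, $\{c(l+l'-1),\ldots,l'-1\}$, $\{1,\ldots,c(n-l-l')\}$ in \prref{4.7} interact correctly with the projective cover being subtracted, so that $Q$ exactly matches the three projective sums with coefficients $ms$, $(m+1)s$, $(m+1)s$ (for $m$ odd) or $ms$, $ms$, $ms$ (for $m$ even). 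These are the same kind of case distinctions handled in \prref{5.2}, and the dimension check at each step is the safety net.
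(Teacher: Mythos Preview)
Your proposal is correct and follows essentially the same route as the paper's proof: induction on $m$, base case $m=0$ from \thref{3.16}, inductive step via the syzygy exact sequence tensored with $M_s(l',r',\eta)$, then \leref{5.1} together with \prref{4.7} to identify the complement $Q$. One sentence is imprecise: the projective summands appearing in the induction hypothesis do not ``get absorbed into $Q$'' via any $\O^{-1}$-shift; rather, since $P(P')=P'$ and $\O P'=0$ for a projective $P'$, they are part of $P(\O^{m-1}V(l,r)\ot M_s(l',r',\eta))$ and are therefore \emph{subtracted} from the middle term when computing $Q$ --- this is exactly how the paper organizes the bookkeeping.
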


\begin{proof}
It is enough to show the proposition for $r=r'=0$.
We prove it by the induction on $m$.
For $m=0$, it follows from Theorem \ref{3.16}. Now let $m>0$.

We only consider the case that $m$ is odd since the proof is similar when $m$ is even.
In this case, $m-1$ is even, and hence there is an exact sequence
$$\begin{array}{rl}
0\ra \O^mV(l, 0)\ot M_s(l',0,\eta)&\ra mP(l, 0)\ot M_s(l',0,\eta)\\
&\ra \O^{m-1}V(l, 0)\ot M_s(l',0,\eta)\ra 0.\\
\end{array}$$
By the induction hypothesis, we have
$$\begin{array}{rl}
&\O^{m-1}V(l, 0)\ot M_s(l', 0, \eta)\\
\cong&(\oplus_{i=0}^{l-1}M_s(l+l'-1-2i, i, \eta q^{2i-l+1}\frac{(l+l'-1-2i)_q}{(l')_q}))\\
&\oplus(\oplus_{i=l'}^{l+l'-1}(m-1)sP(n+l+l'-1-2i, i))\\
&\oplus(\oplus_{c(l+l'-1)\<i\<l'-1}(m-1)sP(n+l+l'-1-2i, i))\\
&\oplus(\oplus_{1\<i\<c(n-l-l')}(m-1)sP(l+l'-1+2i, -i)).\\
\end{array}$$
Note that $\oplus_{i=l'}^{l+l'-1}sP(n+l+l'-1-2i, i)$ is a projective cover
of $\oplus_{i=0}^{l-1}M_s(l+l'-1-2i,i,\eta q^{2i-l+1}\frac{(l+l'-1-2i)_q}{(l')_q})$
and
$$\begin{array}{rl}
&\O(\oplus_{i=0}^{l-1}M_s(l+l'-1-2i,i,\eta q^{2i-l+1}\frac{(l+l'-1-2i)_q}{(l')_q}))\\
\cong&\oplus_{i=l'}^{l+l'-1}M_s(n+l+l'-1-2i, i, -\eta q^{l'}\frac{(2i-l-l'+1)_q}{(l')_q}).\\
\end{array}$$
Hence by Proposition \ref{4.7}, we have
$$\begin{array}{rl}
mP(l, 0)\ot M_s(l', 0, \eta)
\cong &(\oplus_{i=0}^{l-1}msP(l+l'-1-2i, i))\\
&\oplus(\oplus_{i=l'}^{l+l'-1}msP(n+l+l'-1-2i, i))\\
&\oplus(\oplus_{c(l+l'-1)\<i\<l'-1}2msP(n+l+l'-1-2i, i))\\
&\oplus(\oplus_{1\<i\<c(n-l-l')}2msP(l+l'-1+2i, -i))\\
\cong&P(\O^{m-1}V(l, 0)\ot M_s(l', 0, \eta))\\
&\oplus(\oplus_{i=0}^{l-1}msP(l+l'-1-2i, i))\\
&\oplus(\oplus_{c(l+l'-1)\<i\<l'-1}(m+1)sP(n+l+l'-1-2i, i))\\
&\oplus(\oplus_{1\<i\<c(n-l-l')}(m+1)sP(l+l'-1+2i, -i)).\\
\end{array}$$
Then it follows from Lemma \ref{5.1} that
$$\begin{array}{rl}
\O^mV(l, 0)\ot M_s(l', 0, \eta)
\cong&(\oplus_{i=l'}^{l+l'-1}M_s(n+l+l'-1-2i, i, -\eta q^{l'}\frac{(2i-l-l'+1)_q}{(l')_q}))\\
&\oplus(\oplus_{i=0}^{l-1}msP(l+l'-1-2i, i))\\
&\oplus(\oplus_{c(l+l'-1)\<i\<l'-1}(m+1)sP(n+l+l'-1-2i, i))\\
&\oplus(\oplus_{1\<i\<c(n-l-l')}(m+1)sP(l+l'-1+2i, -i)),\\
\end{array}$$
as desired. This completes the proof.
\end{proof}

\begin{corollary}\label{5.7}
Let $1\<l\<l'<n$, $r, r'\in{\mathbb Z}_n$, $\eta\in\mathbb{P}^1(k)$ and $s, m\>1$. Assume that $l+l'\<n$.\\
$(1)$ If $m$ is odd, then
$$\begin{array}{rl}
&\O^{-m}V(l, r)\ot M_s(l', r', \eta)\\
\cong&(\oplus_{i=l'}^{l+l'-1}M_s(n+l+l'-1-2i, r+r'+i, -\eta q^{l'}\frac{(2i-l-l'+1)_q}{(l')_q}))\\
&\oplus(\oplus_{i=l'}^{l+l'-1}msP(n+l+l'-1-2i, r+r'+i))\\
&\oplus(\oplus_{c(l+l'-1)\<i\<l'-1}(m+1)sP(n+l+l'-1-2i, r+r'+i))\\
&\oplus(\oplus_{1\<i\<c(n-l-l')}(m+1)sP(l+l'-1+2i, r+r'-i)).\\
\end{array}$$
$(2)$ If $m$ is even, then
$$\begin{array}{rl}
&\O^{-m}V(l, r)\ot M_s(l', r', \eta)\\
\cong&(\oplus_{i=0}^{l-1}M_s(l+l'-1-2i, r+r'+i, \eta q^{2i-l+1}\frac{(l+l'-1-2i)_q}{(l')_q}))\\
&\oplus(\oplus_{i=0}^{l-1}msP(l+l'-1-2i, r+r'+i))\\
&\oplus(\oplus_{c(l+l'-1)\<i\<l'-1}msP(n+l+l'-1-2i, r+r'+i))\\
&\oplus(\oplus_{1\<i\<c(n-l-l')}msP(l+l'-1+2i, r+r'-i)).\\
\end{array}$$
\end{corollary}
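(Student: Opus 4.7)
The plan is to derive Corollary 5.7 from Proposition 5.6 by applying the duality functor $(-)^*$, exactly in the same spirit in which Corollary 5.3 is obtained from Proposition 5.2 and Corollary 5.5 from Proposition 5.4. The point is that $\mathcal M$ admits the duality $(-)^*$ (see \seref{2.1}), that $(M\ot N)^*\cong N^*\ot M^*\cong M^*\ot N^*$ (the second isomorphism using that $H_n(1,q)$ is quasitriangular), and that on indecomposables the duals are computed by Lemmas \ref{3.12} and \ref{3.13}.

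Concretely, I would apply Proposition \ref{5.6} with parameters
$$(\tilde l,\tilde r,\tilde l',\tilde r',\tilde\eta)\ :=\ (l,\,1-l-r,\,n-l',\,1-r',\,-\eta q^{l'}).$$
The hypotheses $1\<\tilde l\<\tilde l'<n$ and $\tilde l+\tilde l'\<n$ become $1\<l\<l'<n$ and $l+l'\<n$, which are precisely those of Corollary \ref{5.7}. Dualising both sides of the resulting isomorphism, the left-hand side transforms via Lemmas \ref{3.12} and \ref{3.13} into
$$\bigl(\O^m V(l,\tilde r)\ot M_s(\tilde l',\tilde r',\tilde\eta)\bigr)^*\ \cong\ \O^{-m}V(l,r)\ot M_s(l',r',\eta),$$
where I use $q^n=1$ to simplify $-\tilde\eta q^{\tilde l'}=-(-\eta q^{l'})q^{n-l'}=\eta$, and similarly for $\tilde r,\tilde r'$.

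For the right-hand side, each summand is dualised termwise: a band summand $M_s(L,R,\zeta)$ becomes $M_s(n-L,1-R,-\zeta q^L)$, and a projective summand $P(L,R)$ becomes $P(L,1-L-R)$. After performing these substitutions, simplifying the exponents of $\eta$ using $q^n=1$, and reindexing the summation ranges (replacing $i$ by $l+l'-1-i$, or by $-i$ in the last family, as appropriate), each family in the dualised right-hand side matches the corresponding family in the statement of Corollary \ref{5.7}. The parity of $m$ is preserved under duality, so the two cases of Corollary \ref{5.7} come directly from the corresponding two cases of Proposition \ref{5.6}.

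The main obstacle is purely bookkeeping: one must verify that the band-module scalar $-\eta q^{l'}\frac{(2i-l-l'+1)_q}{(l')_q}$ of Corollary \ref{5.7}(1) really arises from the scalar $\tilde\eta q^{2i-l+1}\frac{(l+\tilde l'-1-2i)_q}{(\tilde l')_q}$ of Proposition \ref{5.6}(2) after the substitution and reindexing, and similarly that the ranges $c(l+l'-1)\<i\<l'-1$ and $1\<i\<c(n-l-l')$ for the projective families survive the transformation. These are tedious but entirely routine computations, identical in character to those carried out in the proofs of Corollaries \ref{5.3} and \ref{5.5}.
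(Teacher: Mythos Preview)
Your approach is correct and essentially identical to the paper's own proof, which likewise reduces Corollary~\ref{5.7} to Proposition~\ref{5.6} by applying the duality $(-)^*$ via Lemmas~\ref{3.12} and~\ref{3.13} (the paper first reduces to $r=r'=0$, but this is cosmetic). One small slip: in your final paragraph you say the band scalar of Corollary~\ref{5.7}(1) should match that of Proposition~\ref{5.6}(2), but since the parity of $m$ is preserved under duality (as you yourself note), the odd-$m$ case of the corollary comes from the odd-$m$ case~(1) of the proposition, not case~(2).
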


\begin{proof}
It is enough to show the corollary for $r=r'=0$.
Since $1\<l\<l'<n$ and $l+l'\<n$, we have $1\<l\<n-l'<n$ and $l+n-l'\<n$.

(1) Assume that $m$ is odd. Then by Proposition \ref{5.6}, we have
$$\begin{array}{rl}
&\O^mV(l, 1-l)\ot M_s(n-l', 1, -\eta q^{l'})\\
\cong&(\oplus_{i=n-l'}^{l+n-l'-1}M_s(2n+l-l'-1-2i, 2-l+i, \eta\frac{(2i-l-n+l'+1)_q}{(n-l')_q}))\\
&\oplus(\oplus_{i=0}^{l-1}msP(l+n-l'-1-2i, 2-l+i))\\
&\oplus(\oplus_{c(l+n-l'-1)\<i\<n-l'-1}(m+1)sP(2n+l-l'-1-2i, 2-l+i))\\
&\oplus(\oplus_{1\<i\<c(l'-l)}(m+1)sP(l+n-l'-1+2i, 2-l-i)).\\
\end{array}$$
Then applying the duality $(-)^*$ to the above isomorphism and using Lemmas \ref{3.12} and \ref{3.13},
a tedious but standard computation shows that
$$\begin{array}{rl}
&\O^{-m}V(l, 0)\ot M_s(l', 0, \eta)\\
\cong&(\oplus_{i=l'}^{l+l'-1}M_s(n+l+l'-1-2i, i, -\eta q^{l'}\frac{(2i-l-l'+1)_q}{(l')_q}))\\
&\oplus(\oplus_{i=l'}^{l+l'-1}msP(n+l+l'-1-2i, i))\\
&\oplus(\oplus_{c(l+l'-1)\<i\<l'-1}(m+1)sP(n+l+l'-1-2i, i))\\
&\oplus(\oplus_{1\<i\<c(n-l-l')}(m+1)sP(l+l'-1+2i, -i)).\\
\end{array}$$

(2) It is similar to Part (1).
\end{proof}

\begin{proposition}\label{5.8}
Let $2\<l\<l'<n$, $r, r'\in{\mathbb Z}_n$, $\eta\in\mathbb{P}^1(k)$, $s\>1$ and $m\>0$.
Assume that $t=l+l'-(n+1)\>0$.\\
$(1)$ If $m$ is odd, then
$$\begin{array}{rl}
&\O^mV(l, r)\ot M_s(l', r', \eta)\\
\cong&(\oplus_{i=l'}^{n-1}M_s(n+l+l'-1-2i, r+r'+i, -\eta q^{l'}\frac{(2i-l-l'+1)_q}{(l')_q}))\\
&\oplus(\oplus_{i=t+1}^{l-1}msP(l+l'-1-2i, r+r'+i))\\
&\oplus(\oplus_{i=c(t)}^tmsP(l+l'-1-2i, r+r'+i))\\
&\oplus(\oplus_{c(l+l'-1)\<i\<l'-1}(m+1)sP(n+l+l'-1-2i, r+r'+i))\\
\end{array}$$
$(2)$ If $m$ is even, then
$$\begin{array}{rl}
&\O^mV(l, r)\ot M_s(l', r', \eta)\\
\cong&(\oplus_{i=t+1}^{l-1}M_s(l+l'-1-2i, r+r'+i, \eta q^{2i-l+1}\frac{(l+l'-1-2i)_q}{(l')_q}))\\
&\oplus(\oplus_{i=l'}^{n-1}msP(n+l+l'-1-2i, r+r'+i))\\
&\oplus(\oplus_{i=c(t)}^t(m+1)sP(l+l'-1-2i, r+r'+i))\\
&\oplus(\oplus_{c(l+l'-1)\<i\<l'-1}msP(n+l+l'-1-2i, r+r'+i)).\\
\end{array}$$
\end{proposition}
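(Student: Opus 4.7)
The plan is to mimic exactly the inductive strategy of \prref{5.6}, replacing \prref{4.7} by its $t\geq 0$ analogue \coref{4.8} and the base-case input \thref{3.16} by \thref{3.17}. First I would reduce to $r=r'=0$ using \prref{3.1}(1) together with \coref{3.4} and \leref{3.8}, so that the simple-module tensor factors and the group-like twists act by a uniform shift of the second coordinate. The base case $m=0$ is then exactly \thref{3.17}, which already produces the expected string of $M_s$-summands indexed by $t+1\leq i\leq l-1$, plus the two blocks of projective summands indexed by $c(t)\leq i\leq t$ and $c(l+l'-1)\leq i\leq l-1$.

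For the inductive step, I would distinguish the parity of $m$ exactly as in \prref{5.6}. If $m$ is odd then $m-1$ is even, and the minimal projective cover of $\Omega^{m-1}V(l,0)$ is a multiple of $P(l,0)$, giving the short exact sequence
$$0\to \Omega^{m}V(l,0)\otimes M_s(l',0,\eta)\to mP(l,0)\otimes M_s(l',0,\eta)\to \Omega^{m-1}V(l,0)\otimes M_s(l',0,\eta)\to 0.$$
If $m$ is even, I would use instead the resolution term $mP(n-l,l)$; here one needs $n-l<l'$ and $(n-l)+l'-(n+1)=l'-l-1\geq 0$, and then \coref{4.8} still applies (with $l$ replaced by $n-l$) to decompose the middle term. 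In either parity, the middle term is an explicit direct sum of indecomposable projectives given by \coref{4.8}, while the right-hand term is known by the inductive hypothesis.

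The heart of the argument is then \leref{5.1}: from the list produced by the inductive hypothesis one extracts the projective cover of the non-projective $M_s$-summands, identifies it as a sub-summand of the middle term, and reads off the leftover projective summand $Q$; then $\Omega^m V(l,0)\otimes M_s(l',0,\eta)$ is the syzygy of the non-projective part, together with $Q$. The syzygy of $\oplus_{i=t+1}^{l-1} M_s(l+l'-1-2i,i,\eta q^{2i-l+1}\tfrac{(l+l'-1-2i)_q}{(l')_q})$ is, by the structure of band modules recalled in \seref{2} (namely $\Omega M_s(l,r,\eta)\cong M_s(n-l,r+l,-\eta q^l)$), exactly the block $\oplus_{i=l'}^{n-1} M_s(n+l+l'-1-2i,i,-\eta q^{l'}\tfrac{(2i-l-l'+1)_q}{(l')_q})$ appearing in the odd case of the statement, and symmetrically for the reverse parity.

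The main obstacle I expect is the bookkeeping of the two projective blocks indexed by $c(t)\leq i\leq t$ and $c(l+l'-1)\leq i\leq l'-1$ together with the extra block indexed by $l'\leq i\leq n-1$ (which replaces the $1\leq i\leq c(n-l-l')$ block that appeared in \prref{5.6}). In particular one must verify that, after subtracting the projective cover of the syzygy contribution, the multiplicities of the surviving projective summands correctly produce the coefficients $ms$, $(m+1)s$ prescribed in the statement, and that the indexing ranges $c(t)\leq i\leq t$ versus $t+1\leq i\leq l-1$ match when the $M_s$-summand is converted to its projective cover. This is routine once the parity cases are set up carefully, but is the only place where arithmetic bookkeeping can go wrong; everything else is a direct transcription of the strategy already successfully used for \prref{5.6}.
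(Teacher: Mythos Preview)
Your proposal is correct and follows exactly the approach the paper takes: the paper's own proof is the single sentence ``It is similar to Proposition~\ref{5.6}, where we use Theorem~\ref{3.17} for $m=0$,'' and you have spelled out precisely that strategy. One small bookkeeping point: in the $m$ even step you invoke \coref{4.8} for $P(n-l,l)\otimes M_s(l',0,\eta)$ with $(n-l)+l'-(n+1)=l'-l-1\geq 0$, but this fails in the boundary case $l=l'$, where $(n-l)+l'=n$ and you must instead use \prref{4.7}; this is the same kind of case split the paper handles in the proof of \prref{5.2}, and fits under the routine bookkeeping you already flagged.
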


\begin{proof}
It is similar to Proposition \ref{5.6}, where we use Theorem \ref{3.17} for $m=0$.
\end{proof}

\begin{corollary}\label{5.9}
Let $1\<l'<l<n$, $r, r'\in{\mathbb Z}_n$, $\eta\in\mathbb{P}^1(k)$ and $s, m\>1$. Assume that $l+l'\<n$.\\
$(1)$ If $m$ is odd, then
$$\begin{array}{rl}
&\O^{-m}V(l, r)\ot M_s(l', r', \eta)\\
\cong&(\oplus_{i=l}^{l+l'-1}M_s(n+l+l'-1-2i, r+r'+i, -\eta q^{l'}\frac{(2i-l-l'+1)_q}{(l')_q}))\\
&\oplus(\oplus_{i=l}^{l+l'-1}msP(n+l+l'-1-2i, r+r'+i))\\
&\oplus(\oplus_{i=c(l+l'-1)}^{l-1}msP(n+l+l'-1-2i, r+r'+i))\\
&\oplus(\oplus_{1\<i\<c(n-l-l')}(m+1)sP(l+l'-1+2i, r+r'-i)).\\
\end{array}$$
$(2)$ If $m$ is even, then
$$\begin{array}{rl}
&\O^{-m}V(l, r)\ot M_s(l', r', \eta)\\
\cong&(\oplus_{i=0}^{l'-1}M_s(l+l'-1-2i, r+r'+i, \eta q^{2i-l+1}\frac{(l+l'-1-2i)_q}{(l')_q}))\\
&\oplus(\oplus_{i=0}^{l'-1}msP(l+l'-1-2i, r+r'+i))\\
&\oplus(\oplus_{i=c(l+l'-1)}^{l-1}(m+1)sP(n+l+l'-1-2i, r+r'+i))\\
&\oplus(\oplus_{1\<i\<c(n-l-l')}msP(l+l'-1+2i, r+r'-i)).\\
\end{array}$$
\end{corollary}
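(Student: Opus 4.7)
The plan is to reduce Corollary~\ref{5.9} to Proposition~\ref{5.8} via the duality functor $(-)^*$, in direct analogy with how Corollary~\ref{5.7} was deduced from Proposition~\ref{5.6}. As in the earlier corollaries I would first observe that one may assume $r=r'=0$, since the general statement follows by tensoring with suitable simple modules via Proposition~\ref{3.1}(1), Lemma~\ref{3.8} and Corollary~\ref{3.4}.

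The key numerical observation is the following: under the hypotheses $1\<l'<l<n$ and $l+l'\<n$ of Corollary~\ref{5.9}, the pair $(l,\,n-l')$ satisfies $2\<l\<n-l'<n$ together with $l+(n-l')=n+(l-l')\>n+1$, so that $t:=l+(n-l')-(n+1)=l-l'-1\>0$. Hence Proposition~\ref{5.8} applies to the module $\O^mV(l,\,1-l)\ot M_s(n-l',\,1,\,-\eta q^{l'})$. The specific ``twisted'' arguments $1-l$, $1$ and $-\eta q^{l'}$ are chosen precisely so that, by Lemmas~\ref{3.12} and \ref{3.13}, the duals become
\[
\bigl(\O^mV(l,1-l)\bigr)^*\cong\O^{-m}V(l,0),\qquad
\bigl(M_s(n-l',1,-\eta q^{l'})\bigr)^*\cong M_s(l',0,\eta),
\]
using $q^n=1$ to simplify $-(-\eta q^{l'})q^{n-l'}=\eta$.

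Having fixed these parameters, I would write down the decomposition given by Proposition~\ref{5.8} (in the two parity cases for $m$) and then apply $(-)^*$ termwise. The duality sends each band summand $M_s(X,Y,Z)$ to $M_s(n-X,1-Y,-Zq^X)$ and each projective $P(X,Y)$ to $P(X,1-X-Y)$. After reindexing the summations by a substitution of the form $i\mapsto l+l'-1-i$ (and the analogous shifts for the projective terms), the resulting expressions should match the claimed decomposition in Corollary~\ref{5.9}. Finally, using $M\ot N\cong N\ot M$ (which holds since $H_n(1,q)$ is quasitriangular) together with $(M\ot N)^*\cong N^*\ot M^*$, the left-hand side dualizes to $\O^{-m}V(l,0)\ot M_s(l',0,\eta)$, giving the desired isomorphism.

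The main obstacle will be the bookkeeping in dualizing the band summands of Proposition~\ref{5.8}: one must verify that
\[
\bigl(-(-\eta q^{l'})\,q^{n-l'}\,\tfrac{(2i-l-(n-l')+1)_q}{(n-l')_q}\bigr)\,q^{n+l+(n-l')-1-2i}
\]
rearranges, after substituting $j=l+l'-1-i$ (or the appropriate shift), to the coefficient $-\eta q^{l'}\tfrac{(2j-l-l'+1)_q}{(l')_q}$ appearing in Corollary~\ref{5.9}(1), and that the ranges of the projective summands in Proposition~\ref{5.8} correspond, after the same substitution, to the ranges $i\in[l,l+l'-1]$, $i\in[c(l+l'-1),l-1]$ and $1\<i\<c(n-l-l')$ in Corollary~\ref{5.9}. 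Once this index-and-coefficient comparison is done, part~(2) follows from part~(1) of Proposition~\ref{5.8} by exactly the same dualization with the parity of $m$ unchanged.
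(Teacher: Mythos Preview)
Your proposal is correct and follows exactly the approach indicated in the paper, which simply states that the proof is ``similar to Corollary~\ref{5.7} by using the duality $(-)^*$, Lemmas~\ref{3.12}--\ref{3.13}, and Proposition~\ref{5.8}.'' You have correctly identified the numerical substitution $(l,l')\mapsto(l,n-l')$ that makes Proposition~\ref{5.8} applicable and the twisted parameters needed so that dualizing returns the desired left-hand side.
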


\begin{proof}
It is similar to Corollary \ref{5.7} by using the duality $(-)^*$,
Lemmas \ref{3.12}-\ref{3.13}, and Proposition \ref{5.8}.
\end{proof}

\begin{proposition}\label{5.10}
Let $1\<l'<l<n$, $r, r'\in{\mathbb Z}_n$, $\eta\in\mathbb{P}^1(k)$, $s\>1$ and $m\>0$. Assume that $l+l'\<n$.\\
$(1)$ If $m$ is odd, then
$$\begin{array}{rl}
&\O^mV(l, r)\ot M_s(l', r', \eta)\\
\cong&(\oplus_{i=l}^{l+l'-1}M_s(n+l+l'-1-2i, r+r'+i, -\eta q^{l'}\frac{(2i-l-l'+1)_q}{(l')_q}))\\
&\oplus(\oplus_{i=0}^{l'-1}msP(l+l'-1-2i, r+r'+i))\\
&\oplus(\oplus_{i=c(l+l'-1)}^{l-1}msP(n+l+l'-1-2i, r+r'+i))\\
&\oplus(\oplus_{1\<i\<c(n-l-l')}(m+1)sP(l+l'-1+2i, r+r'-i)).\\
\end{array}$$
$(2)$ If $m$ is even, then
$$\begin{array}{rl}
&\O^mV(l, r)\ot M_s(l', r', \eta)\\
\cong&(\oplus_{i=0}^{l'-1}M_s(l+l'-1-2i, r+r'+i, \eta q^{2i-l+1}\frac{(l+l'-1-2i)_q}{(l')_q}))\\
&\oplus(\oplus_{i=l}^{l+l'-1}msP(n+l+l'-1-2i, r+r'+i))\\
&\oplus(\oplus_{i=c(l+l'-1)}^{l-1}(m+1)sP(n+l+l'-1-2i, r+r'+i))\\
&\oplus(\oplus_{1\<i\<c(n-l-l')}msP(l+l'-1+2i, r+r'-i)).\\
\end{array}$$
\end{proposition}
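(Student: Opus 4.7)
The plan is to prove Proposition \ref{5.10} by induction on $m\geqslant 0$, following the scheme of the proof of Proposition \ref{5.6}. The essential difference is that here $l'<l$ instead of $l\leqslant l'$, so $\min\{l,l'\}=l'$, and the indices $l_1,l_2$ of Proposition \ref{4.7} exchange their values. A standard reduction using $V(1,r)\ot(-)$ (via Lemma \ref{3.8} and Corollary \ref{3.4}) allows me to assume $r=r'=0$. The base case $m=0$ is Theorem \ref{3.16} with $l_1=l'$, whose decomposition matches the $m=0$ instance of Proposition \ref{5.10}(2).

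For the inductive step, I treat $m$ odd (the case $m$ even is symmetric). Since $m-1$ is even, $\O^{m-1}V(l,0)$ has top $mV(l,0)$ and hence projective cover $mP(l,0)$. Tensoring the short exact sequence $0\ra\O^mV(l,0)\ra mP(l,0)\ra\O^{m-1}V(l,0)\ra 0$ with $M_s(l',0,\eta)$ produces
$$0\ra\O^mV(l,0)\ot M_s(l',0,\eta)\ra mP(l,0)\ot M_s(l',0,\eta)\ra\O^{m-1}V(l,0)\ot M_s(l',0,\eta)\ra 0,$$
whose middle term decomposes by Proposition \ref{4.7} and whose right-hand term decomposes by the induction hypothesis. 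Lemma \ref{5.1} then identifies $\O^mV(l,0)\ot M_s(l',0,\eta)$ with $\O(\O^{m-1}V(l,0)\ot M_s(l',0,\eta))\oplus Q$, where $Q$ is the complement of $P(\O^{m-1}V(l,0)\ot M_s(l',0,\eta))$ inside $mP(l,0)\ot M_s(l',0,\eta)$.

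The non-projective summands of $\O^{m-1}V(l,0)\ot M_s(l',0,\eta)$ are band modules $M_s(L,R,\eta')$ with top $sV(n-L,L+R)$. Using the formula $\O M_s(L,R,\eta')\cong M_s(n-L,L+R,-\eta'q^L)$ recalled in Section \ref{2}, a direct reindexing (substituting $j=l+l'-1-i$) transforms $\O$ applied to these summands into exactly the $M_s$-summands listed in the statement for parameter $m$, and their projective covers $sP(n-L,L+R)$ cancel one copy of the matching projective summands appearing in $mP(l,0)\ot M_s(l',0,\eta)$.

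The main obstacle is the combinatorial bookkeeping for $Q$: after subtracting the projective cover of the non-projective part and the projective summands transported by the induction hypothesis from the decomposition of $mP(l,0)\ot M_s(l',0,\eta)$ given by Proposition \ref{4.7}, the remainder $Q$ must consist of three projective families with the correct coefficients $m$, $m$, and $m+1$ in the respective index ranges $0\leqslant i\leqslant l'-1$, $c(l+l'-1)\leqslant i\leqslant l-1$, and $1\leqslant i\leqslant c(n-l-l')$. The doubling of coefficients in the two overlap regions of Proposition \ref{4.7} is precisely what produces the correct increments from the $(m-1)$-step to the $m$-step in the appropriate places. The case $m$ even is handled identically, replacing $mP(l,0)$ by the projective cover $mP(n-l,l)$ of $\O^{m-1}V(l,0)$ (which then has odd power), and yields Proposition \ref{5.10}(2).
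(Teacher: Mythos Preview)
Your proposal is correct and follows essentially the same approach as the paper: the paper's proof simply states ``It is similar to Proposition \ref{5.6}, where we use Theorem \ref{3.16} for $m=0$,'' and your inductive argument via the exact sequence $0\ra\O^mV(l,0)\ot M_s\ra mP(l,0)\ot M_s\ra\O^{m-1}V(l,0)\ot M_s\ra 0$ together with Proposition \ref{4.7} and Lemma \ref{5.1} is exactly that scheme, with the swap $l_1=l'$, $l_2=l$ correctly accounted for.
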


\begin{proof}
It is similar to Proposition \ref{5.6}, where we use Theorem \ref{3.16} for $m=0$.
\end{proof}

\begin{corollary}\label{5.11}
Let $2\<l\<l'<n$, $r, r'\in{\mathbb Z}_n$, $\eta\in\mathbb{P}^1(k)$ and $s, m\>1$. Assume that
$t=l+l'-(n+1)\>0$.\\
$(1)$ If $m$ is odd, then
$$\begin{array}{rl}
&\O^{-m}V(l, r)\ot M_s(l', r', \eta)\\
\cong&(\oplus_{i=l'}^{n-1}M_s(n+l+l'-1-2i, r+r'+i, -\eta q^{l'}\frac{(2i-l-l'+1)_q}{(l')_q}))\\
&\oplus(\oplus_{i=l'}^{n-1}msP(n+l+l'-1-2i, r+r'+i))\\
&\oplus(\oplus_{i=c(t)}^{t}msP(l+l'-1-2i, r+r'+i))\\
&\oplus(\oplus_{c(l+l'-1)\<i\<l'-1}(m+1)sP(n+l+l'-1-2i, r+r'+i)).\\
\end{array}$$
$(2)$ If $m$ is even, then
$$\begin{array}{rl}
&\O^{-m}V(l, r)\ot M_s(l', r', \eta)\\
\cong&(\oplus_{i=t+1}^{l-1}M_s(l+l'-1-2i, r+r'+i, \eta q^{2i-l+1}\frac{(l+l'-1-2i)_q}{(l')_q}))\\
&\oplus(\oplus_{i=t+1}^{l-1}msP(l+l'-1-2i, r+r'+i))\\
&\oplus(\oplus_{i=c(t)}^{t}(m+1)sP(l+l'-1-2i, r+r'+i))\\
&\oplus(\oplus_{c(l+l'-1)\<i\<l'-1}msP(n+l+l'-1-2i, r+r'+i)).\\
\end{array}$$
\end{corollary}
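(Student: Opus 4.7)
The plan is to mirror the strategy used for Corollary~\ref{3.14} and for Corollaries \ref{5.3}, \ref{5.5}, \ref{5.7}, \ref{5.9}: reduce to a decomposition already established for $\O^{m}$ (rather than $\O^{-m}$) and then apply the duality functor $(-)^*$ together with Lemmas~\ref{3.12} and \ref{3.13}. As usual, it suffices to consider the case $r=r'=0$, because tensoring with $V(1,r+r')$ is an auto-equivalence by Corollary~\ref{3.4}, Proposition~\ref{3.6} and Lemma~\ref{3.8}.

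The first step is to select the right instance of Proposition~\ref{5.10} to dualize. Under the hypotheses $2\<l\<l'<n$ and $t=l+l'-(n+1)\>0$, we have $1\<n-l'<l<n$ (the inequality $n-l'<l$ being equivalent to $t\>0$) and $l+(n-l')\<n$ (equivalent to $l\<l'$). Therefore Proposition~\ref{5.10} applies to $\O^{m}V(l,1-l)\ot M_s(n-l',1,-\eta q^{l'})$ with its parameters $(l,l',r,r',\eta)$ specialized to $(l,\,n-l',\,1-l,\,1,\,-\eta q^{l'})$, yielding an explicit decomposition into band and projective summands depending on the parity of $m$.

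The second step is to dualize the resulting isomorphism. Using the symmetry $(M\ot N)^*\cong M^*\ot N^*$ for $H_n(1,q)$ (it is quasitriangular), together with Lemma~\ref{3.12} giving $(\O^{m}V(l,1-l))^*\cong\O^{-m}V(l,0)$ and $P(l',r')^*\cong P(l',1-l'-r')$, and Lemma~\ref{3.13} giving $M_s(n-l',1,-\eta q^{l'})^*\cong M_s(l',0,\eta)$ (since $q^{n}=1$) as well as $M_s(\tilde l,\tilde r,\tilde\eta)^*\cong M_s(n-\tilde l,1-\tilde r,-\tilde\eta q^{\tilde l})$ for the band summands, the left-hand side becomes $\O^{-m}V(l,0)\ot M_s(l',0,\eta)$, which is exactly what we want.

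The main obstacle, and the last step, is the bookkeeping: after dualizing each summand we must reindex by $i\mapsto l+l'-1-i$ (for the summands centered around position $l+l'-1-2i$) and $i\mapsto n+l+l'-1-i$ (for those centered around $n+l+l'-1-2i$), and then verify that the resulting ranges $c(t)\<i\<t$, $t+1\<i\<l-1$, $c(l+l'-1)\<i\<l'-1$, and $l'\<i\<n-1$ (and their multiplicities and parities) agree with the statement of Corollary~\ref{5.11}(1)--(2). The multipliers $-\eta q^{l'}\frac{(2i-l-l'+1)_q}{(l')_q}$ and $\eta q^{2i-l+1}\frac{(l+l'-1-2i)_q}{(l')_q}$ in the band summands must be checked to transform consistently under the substitution $\tilde\eta\mapsto -\tilde\eta q^{\tilde l}$ combined with the reindexing; this is a routine (if tedious) computation analogous to the end of the proof of Corollary~\ref{5.7}.
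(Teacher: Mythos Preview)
Your proposal is correct and follows essentially the same approach as the paper, which simply states that the corollary is proved ``similarly to Corollary~\ref{5.7} by using the duality $(-)^*$, Lemmas~\ref{3.12}--\ref{3.13}, and Proposition~\ref{5.10}.'' You have spelled out the details: the instance of Proposition~\ref{5.10} to be dualized is exactly $\O^{m}V(l,1-l)\ot M_s(n-l',1,-\eta q^{l'})$, and your verification that the hypotheses $1\<n-l'<l<n$ and $l+(n-l')\<n$ hold (equivalent to $t\>0$ and $l\<l'$ respectively) is precisely what is needed.
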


\begin{proof}
It is similar to Corollary \ref{5.7} by using the duality $(-)^*$,
Lemmas \ref{3.12}-\ref{3.13}, and Proposition \ref{5.10}.
\end{proof}

\begin{proposition}\label{5.12}
Let $2\<l'<l<n$, $r, r'\in{\mathbb Z}_n$, $\eta\in\mathbb{P}^1(k)$, $s\>1$ and $m\>0$.
Assume that $t=l+l'-(n+1)\>0$.\\
$(1)$ If $m$ is odd, then
$$\begin{array}{rl}
&\O^m V(l, r)\ot M_s(l', r', \eta)\\
\cong&(\oplus_{i=l}^{n-1}M_s(n+l+l'-1-2i, r+r'+i, -\eta q^{l'}\frac{(2i-l-l'+1)_q}{(l')_q}))\\
&\oplus(\oplus_{i=t+1}^{l'-1}msP(l+l'-1-2i, r+r'+i))\\
&\oplus(\oplus_{i=c(t)}^tmsP(l+l'-1-2i, r+r'+i))\\
&\oplus(\oplus_{i=c(l+l'-1)}^{l-1}msP(n+l+l'-1-2i, r+r'+i)).\\
\end{array}$$
$(2)$ If $m$ is even, then
$$\begin{array}{rl}
&\O^mV(l, r)\ot M_s(l', r', \eta)\\
\cong&(\oplus_{i=t+1}^{l'-1}M_s(l+l'-1-2i, r+r'+i, \eta q^{2i-l+1}\frac{(l+l'-1-2i)_q}{(l')_q}))\\
&\oplus(\oplus_{i=l}^{n-1}msP(n+l+l'-1-2i, r+r'+i))\\
&\oplus(\oplus_{i=c(t)}^t(m+1)sP(l+l'-1-2i, r+r'+i))\\
&\oplus(\oplus_{i=c(l+l'-1)}^{l-1}(m+1)sP(n+l+l'-1-2i, r+r'+i)).\\
\end{array}$$
\end{proposition}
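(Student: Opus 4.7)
The plan is to imitate the proof of Proposition~\ref{5.6}, adapted to the present hypotheses $l>l'\geqslant 2$ and $t=l+l'-(n+1)\geqslant 0$. First, by the standard twisting argument combining Proposition~\ref{3.1}(1), Proposition~\ref{3.6} (last assertion) and Lemma~\ref{3.8}, it suffices to establish the decomposition in the case $r=r'=0$.

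Next I would induct on $m\geqslant 0$. The base case $m=0$ is exactly Theorem~\ref{3.17}, whose hypotheses are met. For the inductive step I would use the short exact sequence
$$0\ra\O^mV(l,0)\ot M_s(l',0,\eta)\ra P_{m-1}\ot M_s(l',0,\eta)\ra\O^{m-1}V(l,0)\ot M_s(l',0,\eta)\ra 0,$$
where $P_{m-1}$ denotes the $(m-1)$-st term of the minimal projective resolution of $V(l,0)$ recalled in \seref{2}; concretely, $P_{m-1}\cong mP(l,0)$ when $m$ is odd and $P_{m-1}\cong mP(n-l,l)$ when $m$ is even. When $m$ is odd I would decompose the middle term via Corollary~\ref{4.8} (applicable since $2\leqslant l',l<n$ and $t\geqslant 0$); when $m$ is even I would instead apply Proposition~\ref{4.7}, which is available because $(n-l)+l'\leqslant n-1$ under $l>l'$. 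The induction hypothesis yields the decomposition of the right-hand term, and Lemma~\ref{5.1} then extracts the decomposition of the left-hand term once the projective cover of $\O^{m-1}V(l,0)\ot M_s(l',0,\eta)$ has been identified as a summand of $P_{m-1}\ot M_s(l',0,\eta)$.

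The non-projective part of the left-hand side is then read off from the syzygy identity $\O M_s(l'',r'',\eta'')\cong M_s(n-l'',r''+l'',-\eta''q^{l''})$ of \seref{2}: applying this term by term to the band summands appearing in part~(2) and reindexing via $i\mapsto l+l'-1-i$ transports the range $t+1\leqslant i\leqslant l'-1$ to $l\leqslant i\leqslant n-1$ and produces precisely the twist $-\eta q^{l'}\frac{(2i-l-l'+1)_q}{(l')_q}$ demanded in part~(1), and vice versa. The main obstacle is the bookkeeping: after removing the projective cover of the inductive right-hand term from the decomposition of $P_{m-1}\ot M_s(l',0,\eta)$, one must verify that the leftover projective summands assemble into exactly the projective part claimed on the left. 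This is a routine but tedious case analysis in the parities of $m$ and $l+l'$, resting on the identity $c(t)+c(t-1)=t$ and on the interaction of the index ranges governed by $c(t)$ and $c(l+l'-1)$; it is of the same flavour as the bookkeeping executed explicitly in the proof of Proposition~\ref{5.6}.
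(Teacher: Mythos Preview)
Your proposal is correct and follows essentially the same route as the paper, which states only that the proof is similar to Proposition~\ref{5.6} with Theorem~\ref{3.17} supplying the base case $m=0$. You have in fact filled in the details that the paper omits: the correct identification of Corollary~\ref{4.8} (for $m$ odd) and Proposition~\ref{4.7} (for $m$ even, applied to $P(n-l,l)\ot M_s(l',0,\eta)$ where $(n-l)+l'<n$ since $l'<l$) is exactly what is needed, and your reindexing $i\mapsto l+l'-1-i$ for the band summands is the appropriate check.
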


\begin{proof}
It is similar to Proposition \ref{5.6}, where we use Theorem \ref{3.17} for $m=0$.
\end{proof}

\begin{corollary}\label{5.13}
Let $2\<l'<l<n$, $r, r'\in{\mathbb Z}_n$, $\eta\in\mathbb{P}^1(k)$ and $s, m\>1$. Assume that
$t=l+l'-(n+1)\>0$.\\
$(1)$ If $m$ is odd, then
$$\begin{array}{rl}
&\O^{-m}V(l, r)\ot M_s(l', r', \eta)\\
\cong&(\oplus_{i=l}^{n-1}M_s(n+l+l'-1-2i, r+r'+i, -\eta q^{l'}\frac{(2i-l-l'+1)_q}{(l')_q}))\\
&\oplus(\oplus_{i=l}^{n-1}msP(n+l+l'-1-2i, r+r'+i))\\
&\oplus(\oplus_{i=c(t)}^{t}msP(l+l'-1-2i, r+r'+i))\\
&\oplus(\oplus_{i=c(l+l'-1)}^{l-1}msP(n+l+l'-1-2i, r+r'+i)).\\
\end{array}$$
$(2)$ If $m$ is even, then
$$\begin{array}{rl}
&\O^{-m}V(l, r)\ot M_s(l', r', \eta)\\
\cong&(\oplus_{i=t+1}^{l'-1}M_s(l+l'-1-2i, r+r'+i, \eta q^{2i-l+1}\frac{(l+l'-1-2i)_q}{(l')_q}))\\
&\oplus(\oplus_{i=t+1}^{l'-1}msP(l+l'-1-2i, r+r'+i))\\
&\oplus(\oplus_{i=c(t)}^{t}(m+1)sP(l+l'-1-2i, r+r'+i))\\
&\oplus(\oplus_{i=c(l+l'-1)}^{l-1}(m+1)sP(n+l+l'-1-2i, r+r'+i)).\\
\end{array}$$
\end{corollary}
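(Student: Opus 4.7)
The plan is to derive this corollary from Proposition \ref{5.12} by applying the duality functor $(-)^*$, exactly mirroring the pattern used to obtain Corollaries \ref{5.7}, \ref{5.9}, and \ref{5.11} from Propositions \ref{5.6}, \ref{5.8}, and \ref{5.10} respectively. First we reduce to the case $r = r' = 0$: tensoring with $V(1, r) \otimes V(1, r')$ merely shifts the character grading of each indecomposable summand by Corollary \ref{3.4}, Proposition \ref{3.6}, and Lemma \ref{3.8}, so the general case follows routinely from $r = r' = 0$.

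Next we apply Proposition \ref{5.12} to the module $N := \O^m V(l, 1-l) \otimes M_s(n-l', 1, -\eta q^{l'})$. The hypotheses of Proposition \ref{5.12} with $(L, L') = (l, n-l')$ are verified as follows: $L = l < n$ and $L' = n - l' \geq 2$ (since $l' \leq n-2$ from $l' < l \leq n-1$); $L' < L$ is equivalent to $l + l' > n$, i.e.\ $t \geq 0$; and the new $T = L + L' - (n+1) = l - l' - 1 \geq 0$ since $l > l'$.

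Now we apply the duality $(-)^*$ to the resulting isomorphism. Using $(A \otimes B)^* \cong B^* \otimes A^*$, the commutativity of $\otimes$ in $\mathcal{M}$, Lemma \ref{3.12} (which gives $(\O^m V(l, 1-l))^* \cong \O^{-m} V(l, 0)$ and $P(A, B)^* \cong P(A, 1-A-B)$), and Lemma \ref{3.13} together with $q^n = 1$ (which gives $M_s(n-l', 1, -\eta q^{l'})^* \cong M_s(l', 0, \eta)$ and $M_s(A, B, C)^* \cong M_s(n-A, 1-B, -Cq^A)$), we obtain $N^* \cong \O^{-m} V(l, 0) \otimes M_s(l', 0, \eta)$, which is exactly the left-hand side we wish to decompose in the reduced case.

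The main obstacle, and the genuinely tedious part, is matching the right-hand sides after dualizing each of the four summand families in the expansion of $N$ from Proposition \ref{5.12}. Each summand family requires its own change of index variable (typically a substitution of the form $j = l + l' - 1 - i$ on the band family and analogous shifts on the projective families), together with repeated use of $q^n = 1$ and the identities $(n-a)_q = -q^{-a}(a)_q$ and $(-a)_q = -q^{-a}(a)_q$ to rewrite the scalar parameters $-Cq^A$ inside each dualized band module in the form $-\eta q^{l'}\frac{(2i-l-l'+1)_q}{(l')_q}$ required by Corollary \ref{5.13}. Once the bookkeeping is in place, the parity cases are automatic: duality interchanges $\O^m$ and $\O^{-m}$ while fixing the multiplicities of projective summands, so the $m$ odd and $m$ even cases of Proposition \ref{5.12} map directly onto parts (1) and (2) of the present corollary.
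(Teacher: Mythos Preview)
Your proposal is correct and follows essentially the same approach as the paper, which states that the proof is ``similar to Corollary \ref{5.7} by using the duality $(-)^*$, Lemmas \ref{3.12}--\ref{3.13}, and Proposition \ref{5.12}.'' Your explicit identification of the dualizing substitution $(L,L')=(l,n-l')$ and verification of the hypotheses of Proposition \ref{5.12} for this pair is exactly the content behind that one-line reference.
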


\begin{proof}
It is similar to Corollary \ref{5.7} by using the duality $(-)^*$,
Lemmas \ref{3.12}-\ref{3.13}, and Proposition \ref{5.12}.
\end{proof}

\subsection{Tensor product of two band modules}\selabel{5.3}
~~

In this subsection, we investigate the tensor product $M=M_m(l,r,\a)\ot M_s(l', r', \eta)$ of
two band modules. By \cite{EGST}, any non-projective indecomposable summand of $M$
must be a band module. The module on an example with $m=s=1$ is displayed in \cite{EGST} for the special case
$n=d=6$. We will determine $M$ for two cases $\a q^{1-l'}(l')_q\neq\eta q^{1-l}(l)_q$
and $\a q^{1-l'}(l')_q=\eta q^{1-l}(l)_q$, respectively.

In the case of $\a q^{1-l'}(l')_q\neq\eta q^{1-l}(l)_q$, we show that $M$ is projective, and
determine the decomposition of $M$ by the inductions on $m+s$ and $l+l'$. We first determine
$M$ for $l=l'=1$ by the induction on $m+s$. Here we use the last exact sequence in \seref{2}.
Then tensoring by $V(2,0)$, we determine $M$ by the induction on $l+l'$.

In the case of $\a q^{1-l'}(l')_q=\eta q^{1-l}(l)_q$, we first determine $M$ for $l+l'\<n$
by the induction on $l+l'$. For $l=l'=1$, we use the exact sequence
$$\begin{array}{rl}
0\ra M_m(1, r, \eta)\ot M_s(1, r', \eta)&\ra\O^mV\ot M_s(1, r', \eta)\\
&\xrightarrow{f} V(n-1, r+1)\ot M_s(1, r', \eta)\ra 0,\\
\end{array}$$
where $V=V(1, r)$ for $m$ being odd, and $V=V(n-l,r+l)$ for $m$ being even.
The decompositions of the middle and right terms are known. We show that the non-projective summand
of the middle term is contained in the kernel of $f$, which gives rise to the decomposition
of the left term by Lemma \ref{5.1}. For the induction step, we use tensoring with $V(2,0)$.
Finally, applying the duality $(-)^*$ to the decomposition
of $M$ for $l+l'<n$, one gets the decomposition of $M$ for $l+l'\>n$.

Now we first consider the case of $\a q^{1-l'}(l')_q\neq\eta q^{1-l}(l)_q$.

\begin{lemma}\label{5.14}
Let $\eta\in{\mathbb P}^1(k)$ and $M\in\mathcal M$.
Assume that $M$ fits into an exact sequence
$$0\ra M_1(1, 0, \eta)\ra M\ra M_1(n-1, 1, -\eta q)\ra 0.$$
Then $M\cong M_1(1, 0,\eta)\oplus M_1(n-1, 1, -\eta q)$ or $M\cong P(1, 0)$.
\end{lemma}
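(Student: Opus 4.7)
The plan is to combine socle analysis with the canonical injective resolution of $N := M_1(1,0,\eta)$, realizing $P(1,0)$ as one possible middle term and ruling out all others. First I would note that since $D(H_n(q))$ is symmetric, $I(V(l,r)) = P(l,r)$, so $I(N) = P(1,0)$ because $\mathrm{soc}(N) \cong V(1,0)$. By the band-module description recalled in Section~2, $\Omega^{-1}N \cong M_1(n-1,1,-\eta q) = N'$, so the canonical sequence $0 \to N \to I(N) \to \Omega^{-1}N \to 0$ is precisely $0 \to N \to P(1,0) \to N' \to 0$; this exhibits $P(1,0)$ as one middle term of the given extension shape.

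Next I would analyze $\mathrm{soc}(M)$. Since $\mathrm{soc}(M) \cap N = \mathrm{soc}(N) = V(1,0)$, the quotient $\mathrm{soc}(M)/V(1,0)$ injects into $\mathrm{soc}(M/N) = \mathrm{soc}(N') = V(n-1,1)$. Hence $\mathrm{soc}(M)$ equals either $V(1,0)$ or $V(1,0) \oplus V(n-1,1)$. If $\mathrm{soc}(M) = V(1,0)$, then $M$ has simple socle and therefore embeds into $I(V(1,0)) = P(1,0)$; the equality $\dim M = 2n = \dim P(1,0)$ forces $M \cong P(1,0)$.

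The remaining case is $\mathrm{soc}(M) = V(1,0) \oplus V(n-1,1)$, where I must show $M \cong N \oplus N'$. I would first observe that $\mathrm{rl}(M) = 2$: an indecomposable of Loewy length $3$ in $\mathcal{M}$ is a projective $P(l,r)$ by the classification in Section~2, and the only such with composition factors $2V(1,0) + 2V(n-1,1)$ is $P(1,0)$, whose socle is simple, contradicting the current hypothesis. Running through indecomposables of Loewy length $\leq 2$ with the prescribed socle, composition factors, and total dimension $2n$ leaves only the possibility $M \cong M_1(1,0,\eta_1) \oplus M_1(n-1,1,\eta_2)$ for some $\eta_1,\eta_2 \in \mathbb{P}^1(k)$.

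The main obstacle is pinning down $(\eta_1,\eta_2) = (\eta,-\eta q)$. Any generator of the cyclic submodule $N \subseteq M$ lies in the $b$-weight-$q$ subspace, which (using $n > 2$, so $b$-weights are distinct within each summand) is two-dimensional, spanned by $x_1$ and $w_2$ in the standard bases of $M_1(1,0,\eta_1)$ and $M_1(n-1,1,\eta_2)$ respectively. Writing the generator as $z_1 = \alpha x_1 + \beta w_2$ and imposing $d z_1 = \eta q\, a^{n-1} z_1$ forces $\eta_1 = \eta$; computing $M/N$ in these coordinates and matching the $d$-action on a lift of the top of $N'$ with that of $N' = M_1(n-1,1,-\eta q)$ then yields $\eta_2 = -\eta q$. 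Hence $M \cong N \oplus N'$. The case $\eta = \infty$ is handled analogously using the corresponding basis description.
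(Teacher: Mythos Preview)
Your argument is correct, but it takes a substantially different route from the paper's proof. The paper simply computes $\mathrm{Ext}^1_{H_n(1,q)}(M_1(n-1,1,-\eta q),\,M_1(1,0,\eta))$ via the long exact sequence coming from the canonical short exact sequence $0\to M_1(1,0,\eta)\to P(1,0)\to M_1(n-1,1,-\eta q)\to 0$: the three relevant $\mathrm{Hom}$-spaces are each one-dimensional, so $\mathrm{Ext}^1\cong k$, whence there are exactly two isomorphism classes of middle terms, the split one and $P(1,0)$. Your approach instead analyzes $\mathrm{soc}(M)$, embeds into $P(1,0)$ when the socle is simple, and in the remaining case invokes the full classification of indecomposables to force $M\cong M_1(1,0,\eta_1)\oplus M_1(n-1,1,\eta_2)$, then pins down the parameters by weight-space computation. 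Both approaches are valid; the paper's is shorter and more conceptual, while yours is more constructive and avoids homological algebra entirely. Two small remarks on tightening your write-up: the step ``running through indecomposables'' deserves the one-line justification that $M$ has exactly two indecomposable summands (as $\mathrm{soc}(M)$ has length two) and that neither can be simple or of $(2,1)$-type by a composition-factor count; and the identification $\eta_2=-\eta q$ is most cleanly obtained by noting that the second summand $B=M_1(n-1,1,\eta_2)$ satisfies $B\cap N=0$ (since any simple submodule of $N$ is $V(1,0)$, not $V(n-1,1)$), so $B\cong M/N$.
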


\begin{proof}
From the exact sequence $0\ra M_1(1, 0, \eta)\ra P(1, 0)\ra M_1(n-1, 1, -\eta q)\ra 0$, one gets a long
exact sequence
$$\begin{array}{rl}
0&\ra {\rm Hom}_{H_n(1, q)}(M_1(n-1, 1, -\eta q), M_1(1, 0, \eta))\\
&\ra {\rm Hom}_{H_n(1, q)}(P(1, 0), M_1(1, 0, \eta))
\ra {\rm Hom}_{H_n(1, q)}(M_1(1, 0, \eta), M_1(1, 0, \eta))\\
&\ra {\rm Ext}^1_{H_n(1, q)}(M_1(n-1, 1, -\eta q), M_1(1, 0, \eta))\ra 0.\\
\end{array}$$
A straightforward verification shows that ${\rm Hom}_{H_n(1, q)}(M_1(n-1, 1, -\eta q), M_1(1, 0, \eta))$,
${\rm Hom}_{H_n(1, q)}(P(1, 0), M_1(1, 0, \eta))$ and ${\rm Hom}_{H_n(1, q)}(M_1(1, 0, \eta), M_1(1, 0, \eta))$
are all one dimensional over $k$. Hence ${\rm Ext}^1_{H_n(1, q)}(M_1(n-1, 1, -\eta q), M_1(1, 0, \eta))\cong k$.
It follows that $M\cong M_1(1, 0,\eta)\oplus M_1(n-1, 1, -\eta q)$ or $M\cong P(1, 0)$.
\end{proof}

\begin{lemma}\label{5.15}
Let $r, r'\in{\mathbb Z}_n$, $\a, \eta\in{\mathbb P}^1(k)$ and $s, m\>1$. Assume
$\a\neq\eta$. Then
$$M_m(1, r,\a)\ot M_s(1, r', \eta)\cong\oplus_{i=1}^{c(n)}msP(2i-1, r+r'-i+1).$$
\end{lemma}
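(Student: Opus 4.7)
The plan is to reduce to the base case $m=s=1$ by induction on $m+s$, and then handle that base case by combining Lemma \ref{5.14} with a Krull--Schmidt symmetry argument that exploits the hypothesis $\a\neq\eta$. For the inductive step, say $m\>2$, I would tensor the short exact sequence
$$0\ra M_1(1, r,\a)\ra M_m(1, r,\a)\ra M_{m-1}(1, r,\a)\ra 0,$$
which comes from the structure of band modules described in \seref{2}, with $M_s(1, r',\eta)$. The induction hypothesis makes both outer tensor products projective of the form asserted in the lemma, so (since $H_n(1,q)$ is Frobenius) the sub $M_1(1,r,\a)\ot M_s(1,r',\eta)$ is also injective, the resulting sequence splits, and the middle term becomes the required direct sum. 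The remaining case $m=1$, $s\>2$ is handled identically by inducting on $s$ with the analogous SES for $M_s(1,r',\eta)$.

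For the base case $m=s=1$, I would tensor the short exact sequence $0\ra V(1,r)\ra M_1(1,r,\a)\ra V(n-1,r+1)\ra 0$ (which has kernel the socle of $M_1(1,r,\a)$) with $M_1(1,r',\eta)$. Lemma \ref{3.8} identifies the kernel of the resulting SES as $M_1(1,r+r',\eta)$, and Theorem \ref{3.16} applied with $l=n-1$, $l'=1$ (together with the identity $q^{2-n}(n-1)_q=-q$, valid because $q^n=1$) gives
$$V(n-1, r+1)\ot M_1(1, r',\eta)\cong M_1(n-1, r+r'+1,-\eta q)\oplus P_1,$$
where $P_1=\oplus_{c(n-1)\<i\<n-2}P(2n-1-2i, r+r'+1+i)$. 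Splitting off the injective summand $P_1$ yields an extension
$$0\ra M_1(1, r+r',\eta)\ra X'\ra M_1(n-1, r+r'+1,-\eta q)\ra 0$$
with $M_1(1,r,\a)\ot M_1(1,r',\eta)\cong X'\oplus P_1$. Lemma \ref{5.14} (shifted by $r+r'$) then forces $X'\cong M_1(1,r+r',\eta)\oplus M_1(n-1,r+r'+1,-\eta q)$ or $X'\cong P(1,r+r')$.

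Next I would run the parallel argument with the roles of the two tensor factors swapped, using the short exact sequence $0\ra V(1,r')\ra M_1(1,r',\eta)\ra V(n-1,r'+1)\ra 0$, to produce a second decomposition $M_1(1,r,\a)\ot M_1(1,r',\eta)\cong X''\oplus P_2$ in which $X''$ is an extension of $M_1(n-1, r+r'+1, -\a q)$ by $M_1(1,r+r',\a)$ and is again either split or isomorphic to $P(1,r+r')$. The main obstacle is the Krull--Schmidt comparison that follows: if both $X'$ and $X''$ were split, then the indecomposable summand $M_1(1,r+r',\a)$ appearing in the second decomposition would, by Krull--Schmidt, have to match some summand of the first. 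The classification of band modules in \seref{2} rules out $M_1(1,r+r',\a)\cong M_1(1,r+r',\eta)$ because $\a\neq\eta$, and rules out $M_1(1,r+r',\a)\cong M_1(n-1,r+r'+1,-\eta q)$ because $n>2$ forces $1\neq n-1$. This contradiction forces at least one of $X'$, $X''$ to equal $P(1,r+r')$, so the tensor product is projective.

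Finally, with $X'\cong P(1,r+r')$ I would conclude $M_1(1,r,\a)\ot M_1(1,r',\eta)\cong P(1, r+r')\oplus P_1$, and the reindexing $j=n-1-i$ in the description of $P_1$ together with the additional summand $P(1,r+r')=P(2\cdot 1-1,r+r'-0)$ assembles this into $\oplus_{i=1}^{c(n)}P(2i-1, r+r'-i+1)$, completing the base case. The step I expect to be genuinely difficult is the Krull--Schmidt comparison in paragraph three: it is the only point where the hypothesis $\a\neq\eta$ is used essentially, and it is what distinguishes this case from the $\a=\eta$ case handled later in the section.
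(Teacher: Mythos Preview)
Your proposal is correct and follows essentially the same route as the paper: induction on $m+s$ using the filtration of $M_m(1,r,\a)$ by shorter band modules to reduce to $m=s=1$, then for the base case the combination of Lemma~\ref{5.14} with a Krull--Schmidt comparison between the two decompositions obtained by swapping the roles of $\a$ and $\eta$. The only cosmetic differences are which direction of the band-module filtration you use in the inductive step and the phrasing of the Krull--Schmidt step (the paper deduces $X'\cong X''$ directly from $P_1\cong P_2$, while you compare individual summands); you should also note explicitly that $P_1\cong P_2$ so that the final formula is independent of which of $X'$, $X''$ is first identified as $P(1,r+r')$.
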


\begin{proof}
It is enough to show the lemma for $r=r'=0$. We prove it by the induction
on $m+s$. We first assume that $m+s=2$. Then $m=s=1$.
Let $M=M_1(1, 0,\a)\ot M_1(1, 0, \eta)$.
Applying $M_1(1, 0,\a)\ot$ to the exact sequence
$0\ra V(1, 0)\ra M_1(1, 0, \eta)\ra V(n-1, 1)\ra 0$, one gets another exact sequence
$0\ra M_1(1, 0,\a)\ot V(1, 0)\ra M\ra M_1(1, 0,\a)\ot V(n-1, 1)\ra 0$.
By Theorem \ref{3.16}, we have
$$\begin{array}{rl}
&M_1(1,0,\a)\ot V(n-1,1)\\
\cong& M_1(n-1, 1, -\a q))
\oplus(\oplus_{i=c(n-1)}^{n-2} P(2n-1-2i, i+1))\\
\cong& M_1(n-1, 1, -\a q)
\oplus(\oplus_{i=2}^{c(n)} P(2i-1, 1-i)).\\
\end{array}$$
By $M_1(1, 0,\a)\ot V(1, 0)\cong M_1(1, 0,\a)$, it follows that
there exist two submodules $M_1$ and $M_2$ of $M$ with $M=M_1\oplus M_2$ such that
$M_2\cong \oplus_{i=2}^{c(n)} P(2i-1, 1-i)$ and $M_1$ fits an exact sequence
$$0\ra M_1(1, 0, \a)\ra M_1\ra M_1(n-1, 1, -\a q)\ra 0.$$
Then by Lemma \ref{5.14}, $M_1\cong M_1(1, 0,\a)\oplus M_1(n-1, 1, -\a q)$ or $M_1\cong P(1, 0)$.
Since $M_1(1, 0,\a)\ot M_1(1, 0, \eta)\cong M_1(1, 0,\eta)\ot M_1(1, 0, \a)$,
a similar argument as above shows that $M=N_1\oplus N_2$, where $N_1$ and $N_2$ are submodules of $M$,
$N_2\cong \oplus_{i=2}^{c(n)} P(2i-1, 1-i)$, and
$N_1\cong M_1(1, 0,\eta)\oplus M_1(n-1, 1, -\eta q)$ or $N_1\cong P(1, 0)$.
Since $M=M_1\oplus M_2=N_1\oplus N_2$ and $M_2\cong N_2$,
it follows from Krull-Schmidt Theorem that $M_1\cong N_1$.
However, $M_1(1, 0,\a)\oplus M_1(n-1, 1, -\a q)\ncong M_1(1, 0,\eta)\oplus M_1(n-1, 1, -\eta q)$
by $\a\neq \eta$. Therefore, $M_1\cong N_1\cong P(1, 0)$.
Thus, we have $M_1(1, 0,\a)\ot M_1(1, 0, \eta)\cong\oplus_{i=1}^{c(n)} P(2i-1, 1-i)$.

Now assume that $m+s>2$. We may assume that $m\>2$ without losing the generality.
Then there is an exact sequence
$$\begin{array}{rl}
0\ra M_{m-1}(1, 0, \a)\ot M_s(1,0,\eta)&\ra M_{m}(1, 0, \a)\ot M_s(1,0,\eta)\\
&\ra M_1(1, 0, \a)\ot M_s(1,0,\eta)\ra 0.\\
\end{array}$$
By the induction hypothesis, $M_1(1, 0, \a)\ot M_s(1, 0, \eta)$ is projective, and so
the above exact sequence is split. Again by the induction hypothesis, we have
$$\begin{array}{rl}
&M_m(1, 0, \a)\ot M_s(1, 0, \eta)\\
\cong&M_{m-1}(1, 0, \a)\ot M_{s}(1, 0, \eta)\oplus M_1(1, 0, \a)\ot M_s(1, 0, \eta)\\
\cong&(\oplus_{i=1}^{c(n)}(m-1)sP(2i-1, 1-i))\oplus(\oplus_{i=1}^{c(n)}sP(2i-1, 1-i))\\
\cong&\oplus_{i=1}^{c(n)}msP(2i-1, 1-i).\\
\end{array}$$
This completes the proof.
\end{proof}

\begin{proposition}\label{5.16}
Let $1\<l\<l'<n$, $r, r'\in{\mathbb Z}_n$, $\a, \eta\in{\mathbb P}^1(k)$
and $s, m\>1$. Assume that $\a\neq\eta$. Then
$$\begin{array}{rl}
&M_m(l, r, \a q^{1-l}(l)_q)\ot M_s(l', r', \eta q^{1-l'}(l')_q)\\
\cong&(\oplus_{i=1}^{c(n+l-l')}msP(l'-l-1+2i, r+r'+l-i))\\
&\oplus(\oplus_{c(l+l'-1)\<i\<l'-1}msP(n+l+l'-1-2i,r+r'+i)).\\
\end{array}$$
\end{proposition}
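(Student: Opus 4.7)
Following the outline sketched in the introduction to this subsection, we induct on $l + l'$, with the base case $l = l' = 1$ supplied by Lemma~\ref{5.15} (which itself inducts on $m + s$). By Lemma~\ref{3.8} and commutativity of $\otimes$, we reduce to $r = r' = 0$ and $l \leq l'$. The key engine is Lemma~\ref{3.15} together with its boundary versions Lemma~\ref{3.11} (for $l = 1$) and Corollary~\ref{3.14} (for $l = n - 1$), which express $V(2, 0) \otimes M_m(l, 0, \alpha q^{1-l}(l)_q)$ and $V(2, 0) \otimes M_s(l'-1, 0, \eta q^{2-l'}(l'-1)_q)$ as direct sums of two band modules (plus a small projective correction in the boundary cases); note that the parametrization $\mu = \alpha q^{1-l}(l)_q$ of the band parameter is preserved under the $V(2,0)$-action, with $\alpha$ unchanged.

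Setting $f(l, l') := M_m(l, 0, \alpha q^{1-l}(l)_q) \otimes M_s(l', 0, \eta q^{1-l'}(l')_q)$, I compute $X := V(2, 0) \otimes M_m(l, \ldots) \otimes M_s(l'-1, \ldots)$ in two ways. Expanding $V(2, 0) \otimes M_s(l'-1, \ldots)$ first gives $X \cong f(l, l') \oplus f(l, l'-2)$, while expanding $V(2, 0) \otimes M_m(l, \ldots)$ first gives $X \cong f(l+1, l'-1) \oplus f(l-1, l'-1)$, with $r$-shifts absorbed via Lemma~\ref{3.8} and projective correction terms collected separately at the boundaries. The terms $f(l, l'-2)$ and $f(l-1, l'-1)$ at total $l + l' - 2$ are known by the outer induction, so Krull--Schmidt relates the two unknowns $f(l, l')$ and $f(l+1, l'-1)$ at total $l + l' = N$. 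Iterating for $l = 1, 2, \ldots, l'-1$ propagates along the diagonal $l + l' = N$, provided a single anchor value at total $N$ is fixed.

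The anchor $f(1, l')$ is obtained by tensoring the defining exact sequence $0 \to M_s(l', 0, \eta q^{1-l'}(l')_q) \to sP(l', 0) \to M_s(n-l', l', -\eta q(l')_q) \to 0$ with $M_m(1, 0, \alpha)$: the middle term is projective by Proposition~\ref{4.7}, and (after rewriting via Lemma~\ref{3.13} in the form compatible with our parametrization) the outer term is a shift of $f(1, n-l')$. Combined with Lemma~\ref{5.15} at $l'=1$ and the dual reduction via $(-)^*$, this determines $f(1, l')$ for all $l' \geq 1$. The projective summands produced by the recurrence are then identified using Proposition~\ref{4.7} (projective--band products) and Theorems~\ref{3.16}--\ref{3.17} (simple--band products).

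The main obstacle is the combinatorial bookkeeping: matching the claimed index ranges $1 \leq i \leq c(n+l-l')$ and $c(l+l'-1) \leq i \leq l'-1$ with those emerging from the recurrence requires careful case analysis by the parities of $m$, $s$, and $l + l'$, by the boundary positions $l, l' \in \{1, n-1\}$, and by the dichotomy $l + l' \leq n$ versus $l + l' > n$ (which controls whether Proposition~\ref{3.1}(1) or~(2) governs the underlying simple--simple tensor products); the verification is tedious but mechanical.
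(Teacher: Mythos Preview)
Your overall architecture---induction on $l+l'$ with base case Lemma~\ref{5.15}, and driving the step by tensoring with $V(2,0)$---matches the paper. But your execution of the inductive step is more convoluted than necessary, and the extra ``anchor'' argument you introduce is both avoidable and not fully justified.

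The issue is in how you compute $X=V(2,0)\ot M_m(l,\ldots)\ot M_s(l'-1,\ldots)$. You expand $V(2,0)$ against each band factor separately, obtaining
\[
f(l,l')\oplus f(l,l'-2)\;\cong\;f(l+1,l'-1)\oplus f(l-1,l'-1),
\]
which relates \emph{two} unknowns at level $N=l+l'$ and forces you to propagate along the diagonal with an anchor at $f(1,l')$. The paper avoids this entirely: since $M_m(l,\ldots)\ot M_s(l'-1,\ldots)$ sits at level $N-1$, the induction hypothesis already gives it explicitly as a direct sum of projectives. Tensoring that sum with $V(2,0)$ via Proposition~\ref{3.1} and Theorems~\ref{3.3}, \ref{3.5} determines $X$ completely. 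Then a single application of Lemma~\ref{3.15} (or its boundary forms) to $V(2,0)\ot M_s(l'-1,\ldots)$ yields $X\cong f(l,l')\oplus f(l,l'-2)$, and since $f(l,l'-2)$ is known at level $N-2$, Krull--Schmidt isolates $f(l,l')$ directly---no diagonal recursion, no anchor.

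Your anchor construction is also shakier than you suggest. Tensoring the syzygy sequence for $M_s(l',\ldots)$ with $M_m(1,0,\a)$ relates $f(1,l')$ to $f(1,n-l')$ through Lemma~\ref{5.1}, but $1+(n-l')$ need not be smaller than $1+l'$ (it is larger when $l'<n/2$), so this is not a reduction within the induction; you would need a separate argument to close the loop, which you only gesture at. Replacing this with the paper's direct use of the level-$(N-1)$ hypothesis removes the difficulty entirely.
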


\begin{proof}
It is enough to show the proposition for $r=r'=0$. We prove it by the induction
on $l+l'$. For $l+l'=2$, it follows from Lemma \ref{5.15}. Now let $l+l'>2$.

We only consider the case of $l<l'$ since the proof is similar for $l=l'$.
In this case, $l'-1\>l$. By the induction hypothesis, we have
$$\begin{array}{rl}
&M_m(l, 0, \a q^{1-l}(l)_q)\ot M_s(l'-1, 0, \eta q^{2-l'}(l'-1)_q)\ot V(2, 0)\\
\cong&(\oplus_{i=1}^{c(n+l-l'+1)}msP(l'-l-2+2i, l-i)\ot V(2, 0))\\
&\oplus(\oplus_{c(l+l'-2)\<i\<l'-2}msP(n+l+l'-2-2i,i)\ot V(2, 0)).\\
\end{array}$$
If $l'=2$, then $l=1$. Hence by Lemma \ref{3.11} and Theorem \ref{3.17}, we have
$$\begin{array}{rl}
&M_m(1, 0, \a)\ot M_s(1, 0, \eta)\ot V(2, 0)\\
\cong&M_m(1, 0, \a)\ot M_s(2, 0,\eta q^{-1}(2)_q)\oplus sM_m(1, 0, \a)\ot V(n, 1)\\
\cong&M_m(1, 0, \a)\ot M_s(2, 0,\eta q^{-1}(2)_q)
\oplus(\oplus_{i=1}^{c(n-1)}msP(2i, 1-i))\oplus msV(n, 1).\\
\end{array}$$
In this case, using Proposition \ref{3.1}, Theorems \ref{3.3} and \ref{3.5},
a straightforward computation (for $n$ to be even and odd, respectively) shows that
$$\oplus_{i=1}^{c(n)} P(2i-1, 1-i)\ot V(2, 0)\\
\cong(\oplus_{i=1}^{c(n-1)}2P(2i, 1-i))\oplus 2V(n, 1).$$
Thus, it follows from Krull-Schmidt Theorem that
$$M_m(1, 0, \a)\ot M_s(2, 0,\eta q^{-1}(2)_q)\cong
(\oplus_{i=1}^{c(n-1)}msP(2i, 1-i))\oplus msV(n, 1),$$
as desired. If $l'>2$ and $l\<l'-2$, then by Lemma \ref{3.15} (or Theorem \ref{3.16})
and the induction hypothesis, we have
$$\begin{array}{rl}
&M_m(l, 0, \a q^{1-l}(l)_q)\ot M_s(l'-1, 0, \eta q^{2-l'}(l'-1)_q)\ot V(2, 0)\\
\cong&M_m(l, 0, \a q^{1-l}(l)_q)\ot M_s(l', 0,\eta q^{1-l'}(l')_q)\\
&\oplus M_m(l, 0, \a q^{1-l}(l)_q)\ot M_s(l'-2, 1, \eta q^{3-l'}(l'-2)_q)\\
\cong&M_m(l, 0, \a q^{1-l}(l)_q)\ot M_s(l', 0,\eta q^{1-l'}(l')_q)\\
&\oplus(\oplus_{i=1}^{c(n+l-l'+2)}msP(l'-l-3+2i, 1+l-i))\\
&\oplus(\oplus_{c(l+l'-3)\<i\<l'-3}msP(n+l+l'-3-2i,1+i))\\
\cong&M_m(l, 0, \a q^{1-l}(l)_q)\ot M_s(l', 0,\eta q^{1-l'}(l')_q)\\
&\oplus(\oplus_{i=0}^{c(n+l-l')}msP(l'-l-1+2i, l-i))\\
&\oplus(\oplus_{c(l+l'-1)\<i\<l'-2}msP(n+l+l'-1-2i,i)).\\
\end{array}$$
On the other hand, by Proposition \ref{3.1}, Theorems \ref{3.3} and \ref{3.5},
one can check that
$$\begin{array}{rl}
&(\oplus_{i=1}^{c(n+l-l'+1)} P(l'-l-2+2i, l-i)\ot V(2, 0))\\
&\oplus(\oplus_{c(l+l'-2)\<i\<l'-2}P(n+l+l'-2-2i,i)\ot V(2, 0))\\
\cong&(\oplus_{i=1}^{c(n+l-l')} P(l'-l-1+2i, l-i))\\
&\oplus(\oplus_{i=0}^{c(n+l-l')} P(l'-l-1+2i, l-i))\\
&\oplus(\oplus_{i=c(l+l'-1)}^{l'-1}P(n+l+l'-1-2i,i))\\
&\oplus(\oplus_{c(l+l'-1)\<i\<l'-2}P(n+l+l'-1-2i,i)).\\
\end{array}$$
Hence it follows from Krull-Schmidt Theorem that
$$\begin{array}{rl}
&M_m(l, 0, \a q^{1-l}(l)_q)\ot M_s(l', 0, \eta q^{1-l'}(l')_q)\\
\cong&(\oplus_{i=1}^{c(n+l-l')}msP(l'-l-1+2i, l-i))\\
&\oplus(\oplus_{i=c(l+l'-1)}^{l'-1}msP(n+l+l'-1-2i, i)),\\
\end{array}$$
as desired. If $l'>2$ and $l=l'-1$, then by the induction hypothesis, and Theorems \ref{3.3}
and \ref{3.5}, a similar argument as above shows that
$$M_m(l, 0, \a q^{1-l}(l)_q)\ot M_s(l', 0,\eta q^{1-l'}(l')_q)
\cong(\oplus_{i=1}^{c(n-1)}msP(2i, l-i))\oplus msV(n, l).$$
This completes the proof.
\end{proof}

\begin{corollary}\label{5.17}
Let $1\<l\<l'<n$, $r, r'\in{\mathbb Z}_n$, $\a, \eta\in{\mathbb P}^1(k)$
and $s, m\>1$. Assume that $\a q^{1-l'}(l')_q\neq\eta q^{1-l}(l)_q$. Then
$$\begin{array}{rl}
&M_m(l, r, \a)\ot M_s(l', r', \eta)\\
\cong&(\oplus_{i=1}^{c(n-l'+l)}msP(l'-l-1+2i, r+r'+l-i))\\
&\oplus(\oplus_{c(l+l'-1)\<i\<l'-1}msP(n+l+l'-1-2i,r+r'+i)).\\
\end{array}$$
\end{corollary}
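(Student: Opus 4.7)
The plan is to deduce Corollary \ref{5.17} directly from Proposition \ref{5.16} via a reparametrization of the scalar parameters of the two band modules. Since $1\<l,l'<n$, we have $(l)_q\neq 0\neq (l')_q$, so the scalars $q^{1-l}(l)_q$ and $q^{1-l'}(l')_q$ are nonzero elements of $k$. Consequently multiplication by them induces bijections on $\mathbb{P}^1(k)=k\cup\{\oo\}$, using the convention $\lambda\cdot\oo=\oo$ for any $0\neq\lambda\in k$ recorded at the end of \seref{2}.

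First I would set $\a'=\a q^{l-1}/(l)_q$ and $\eta'=\eta q^{l'-1}/(l')_q$, so that $\a'q^{1-l}(l)_q=\a$ and $\eta'q^{1-l'}(l')_q=\eta$. Substituting these parameters into Proposition \ref{5.16} identifies the left-hand side of its decomposition with $M_m(l,r,\a)\ot M_s(l',r',\eta)$, while the right-hand side, which involves only indecomposable projective summands indexed by $l,l',r,r'$, carries over verbatim and coincides with the right-hand side of Corollary \ref{5.17}.

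It then remains only to translate the hypothesis. The condition $\a'\neq\eta'$ from Proposition \ref{5.16} reads $\a q^{l-1}/(l)_q\neq \eta q^{l'-1}/(l')_q$, and multiplying both sides by the nonzero scalar $q^{2-l-l'}(l)_q(l')_q$ yields the equivalent inequality $\a q^{1-l'}(l')_q\neq \eta q^{1-l}(l)_q$, which is precisely the hypothesis of Corollary \ref{5.17}; the equivalence still holds when one of $\a,\eta$ equals $\oo$ since $\oo$ is absorbed by multiplication with any nonzero scalar. I expect no substantive obstacle: the only delicate points are verifying that the parameter substitution is a well-defined bijection on $\mathbb{P}^1(k)$ (which it is, since $(l)_q,(l')_q\in k^{\times}$) and that the two non-equality conditions match up under this bijection, both of which are immediate.
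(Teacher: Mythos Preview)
Your proposal is correct and takes essentially the same approach as the paper, whose proof consists of the single line ``It follows from Proposition \ref{5.16}.'' Your reparametrization $\a'=\a q^{l-1}/(l)_q$, $\eta'=\eta q^{l'-1}/(l')_q$ is exactly the substitution that makes this immediate, and your verification that the hypothesis $\a'\neq\eta'$ translates to $\a q^{1-l'}(l')_q\neq\eta q^{1-l}(l)_q$ (including the $\oo$ case) is accurate.
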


\begin{proof}
It follows from Proposition \ref{5.16}.
\end{proof}

Now we investigate $M_m(l, r, \a)\ot M_s(l', r', \eta)$ for $\a q^{1-l'}(l')_q=\eta q^{1-l}(l)_q$.
We only need to consider the case $m\>s$ since
$M_m(l,r,\a)\ot M_s(l', r', \eta)\cong M_s(l', r', \eta)\ot M_m(l,r,\a)$.

\begin{lemma}\label{5.18}
Let $M$ be an indecomposable module with ${\rm rl}(M)=2$.\\
$(1)$ If $M$ is of $(s+1, s)$-type, then $M$ contains no submodules
of $(i+1, i)$-type for any $s>i\>1$, and consequently, $M$ contains no proper submodule
$N$ with $l(N/{\rm soc}(N))>l(N)$.\\
$(2)$ If $M$ is of $(s, s)$-type, then $M$ contains no submodules
of $(i+1, i)$-type, and consequently, $M$ contains no submodule
$N$ with $l(N/{\rm soc}(N))>l(N)$.
\end{lemma}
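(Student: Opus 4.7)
The plan is to reduce to the case of an indecomposable submodule and then derive a contradiction from a composition-length bound. First, observe that for any submodule $N$ of a module of Loewy length at most $2$, one has $l(N) = l(N/{\rm soc}(N)) + l({\rm soc}(N))$; hence if $N$ satisfies $l(N/{\rm soc}(N)) > l({\rm soc}(N))$, some indecomposable summand of $N$ inherits this property, and by the classification in \seref{2} must be of $(j+1, j)$-type for some $j \> 1$. For part (1), any proper submodule of $M = \O^s V(l, r)$ has length strictly less than $l(M) = 2s+1$, which rules out $(s+1, s)$-type proper submodules; so in both parts it suffices to show $M$ contains no submodule of $(i+1, i)$-type in the stated range with $i \> 1$.

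The key structural input is that both $M = \O^s V(l, r)$ and $M = M_s(l, r, \eta)$ embed into $sP$ for an appropriate indecomposable projective $P$---using the minimal projective resolution of $V(l, r)$ for strings and the exact sequence $0 \ra M_s(l, r, \eta) \ra sP(l, r) \ra M_s(n-l, r+l, -\eta q^l) \ra 0$ for bands---and in both cases ${\rm soc}(M) = {\rm soc}(sP) \cong sV$, where $V$ is the socle simple of $M$. Suppose now that $N \subseteq M$ is indecomposable of $(i+1, i)$-type with $i \> 1$. The identity ${\rm soc}(N) = N \cap {\rm soc}(M)$ yields an injection $N/{\rm soc}(N) \hookrightarrow M/{\rm soc}(M)$, so the top simple of $N$ coincides with the top simple $V_{\rm top}$ of $M$, and by inspection of the formulas in \seref{2}, $V_{\rm top} \ne V$.

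Next, choose a decomposition $sP = iP \oplus (s-i)P$ such that ${\rm soc}(iP) = {\rm soc}(N)$; this is possible because ${\rm soc}(N) \cong iV \subseteq sV = {\rm soc}(sP)$ and any splitting of $sV$ lifts to one of $sP$. Let $\pi\colon sP \ra (s-i)P$ be the projection. Since $N$ has Loewy length $2$, $J\cdot\pi(N) = \pi(JN) \subseteq \pi({\rm soc}(N)) = 0$, so $\pi(N)$ is semisimple, hence $\pi(N) \subseteq {\rm soc}((s-i)P) \cong (s-i)V$. The induced surjection $N/{\rm soc}(N) \twoheadrightarrow \pi(N)$ of $H/J$-modules then forces $\pi(N) = 0$, since $N/{\rm soc}(N)$ is a multiple of $V_{\rm top} \ne V$. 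Hence $N \subseteq M \cap iP$.

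The contradiction comes from a composition-length bound. In each case, the image $X$ of $iP$ under the quotient map ($sP \ra \O^{s-1}V(l, r)$ for strings; $sP(l, r) \ra M_s(n-l, r+l, -\eta q^l)$ for bands) is the submodule generated by the $i$ top-lifts of the $iP$-summand, and---using the explicit structures of \seref{2} together with Lemmas \ref{3.9} and \ref{3.10}---one verifies $l(X) \> 2i$: the top contributes exactly $i$ (via the injection on tops induced by the projective-cover property), and each of the $i$ chosen top generators forces at least one distinct socle generator of the quotient module via the $d$-action. Hence $l(M \cap iP) = l(iP) - l(X) \< 4i - 2i = 2i$, whereas $l(N) = 2i+1$, contradicting $N \subseteq M \cap iP$. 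The main obstacle is the verification of the lower bound $l(X) \> 2i$, which requires tracking how the $i$ chosen top generators propagate through the $a$- and $d$-actions in the explicit basis descriptions of the quotient modules.
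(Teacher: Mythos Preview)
The paper's proof is simply a citation to \cite{Ch3} and \cite{Ch4}, so there is no detailed argument to compare against; you are attempting a self-contained proof, which is commendable. Your reduction to an indecomposable submodule $N$ of $(i+1,i)$-type, the identification of its top and socle types via the embedding $N/{\rm soc}(N)\hookrightarrow M/{\rm soc}(M)$, and the construction of the splitting $sP=iP\oplus(s-i)P$ with ${\rm soc}(iP)={\rm soc}(N)$ are all sound.

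The difficulty is exactly where you flag it: the bound $l(X)\geqslant 2i$. Observe that $X$ is a submodule of the quotient $Q$ (which is $\O^{s-1}V(l,r)$ in the string case and $M_s(n-l,r+l,-\eta q^l)$ in the band case) with $l(X/{\rm soc}(X))=i$, so asking for $l({\rm soc}(X))\geqslant i$ is precisely the statement that $X$ is \emph{not} a counterexample inside $Q$. In the string case $Q$ is of $(s,s-1)$-type, so this can be arranged by induction on $s$---but you do not set up such an induction, and the base case and the possibility $X=Q$ need separate checking. In the band case the situation is worse: $Q$ is again of $(s,s)$-type with the \emph{same} $s$, so no induction on $s$ is available and the structural route is genuinely circular. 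Your fallback---a direct computation with the bases of Lemmas~\ref{3.9} and~\ref{3.10}---would require knowing the explicit map $sP(l,r)\to M_s(n-l,r+l,-\eta q^l)$, tracking an \emph{arbitrary} $i$-dimensional summand of the top (since the splitting depends on ${\rm soc}(N)$), and showing that each top generator drags a distinct socle element into the image; this is plausible but is real work that you have not carried out. As written, the proposal is an outline with its central step left open.
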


\begin{proof}
It follows from \cite[Lemma 4.3]{Ch3} and \cite[Proposition 3.3]{Ch4}. It also can be shown by an
argument similar to the proof of \cite[Lemma 4.3]{Ch3}.
\end{proof}

\begin{lemma}\label{5.19}
Let $s\>1$ and $M$ be an indecomposable module of $(s, s)$-type. Then $M$ can be embedded into an indecomposable
module of $(s+1, s)$-type.
\end{lemma}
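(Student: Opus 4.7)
The plan is to enlarge $M$ by one simple composition factor on top, using the natural surjection with kernel $M$ inside a projective-injective ambient module.

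Since $M$ is of $(s,s)$-type, Section~2 gives $M\cong M_s(l,r,\eta)$ for some $1\leqslant l<n$, $r\in\mathbb{Z}_n$ and $\eta\in\mathbb{P}^1(k)$. The tool is the short exact sequence recalled in Section~2,
$$0\to M_s(l,r,\eta)\to sP(l,r)\xrightarrow{\pi} M_s(n-l,r+l,-\eta q^l)\to 0.$$
The cokernel has socle $sV(n-l,r+l)$, so it admits a simple submodule $W\cong V(n-l,r+l)$, and I set $N:=\pi^{-1}(W)\subseteq sP(l,r)$. By construction,
$$0\to M_s(l,r,\eta)\to N\to V(n-l,r+l)\to 0,$$
so it remains to show that $N$ is an indecomposable module of $(s+1,s)$-type.

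For the Loewy structure, I use that ${\rm soc}(sP(l,r))=sV(l,r)={\rm soc}(M_s(l,r,\eta))\subseteq N$, and pass to $\overline N:=N/{\rm soc}(sP(l,r))$ inside $\overline{sP}:=sP(l,r)/{\rm soc}(sP(l,r))$. The ambient $\overline{sP}$ has Loewy length $2$ with top $sV(l,r)$ and socle $2sV(n-l,r+l)$, while the composition factors of $\overline N$ are precisely $(s+1)$ copies of $V(n-l,r+l)$. Hence the image of $\overline N$ in the top of $\overline{sP}$ must vanish, so $\overline N$ lies in the socle of $\overline{sP}$ and is therefore semisimple. This forces ${\rm rl}(N)=2$, and a short dimension count gives ${\rm soc}(N)\cong sV(l,r)$ and $N/{\rm soc}(N)\cong (s+1)V(n-l,r+l)$, confirming the $(s+1,s)$-type.

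For indecomposability, I will show that the extension class of $N$ is nonzero. Applying ${\rm Hom}(V(n-l,r+l),-)$ to the sequence defining $\pi$, and using both that $V(n-l,r+l)\not\cong V(l,r)$ (so every map from $V(n-l,r+l)$ into $sP(l,r)$, whose socle is $sV(l,r)$, is zero) and that $sP(l,r)$ is injective (as $H_n(1,q)$ is symmetric), the long exact sequence collapses to
$${\rm Ext}^1(V(n-l,r+l),M_s(l,r,\eta))\cong {\rm Hom}(V(n-l,r+l),M_s(n-l,r+l,-\eta q^l)).$$
Under this isomorphism the class of $N$ corresponds, via the standard pullback construction, to the nonzero inclusion $W\hookrightarrow M_s(n-l,r+l,-\eta q^l)$; thus the extension does not split, and since the added quotient $V(n-l,r+l)$ is simple, $N$ is indecomposable. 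The only delicate step is the composition-factor analysis that pins down the Loewy length of $N$; everything else is formal once one has the Section~2 embedding $M_s(l,r,\eta)\hookrightarrow sP(l,r)$ and the Ext-computation above.
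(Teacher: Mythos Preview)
Your construction of $N=\pi^{-1}(W)$, the verification that $N$ is of $(s{+}1,s)$-type, and the ${\rm Ext}^1$ computation showing that the sequence $0\to M_s(l,r,\eta)\to N\to V(n{-}l,r{+}l)\to 0$ does not split are all correct. The gap is the very last inference: a non-split short exact sequence $0\to A\to N\to S\to 0$ with $A$ indecomposable and $S$ simple does \emph{not} force $N$ to be indecomposable, even under the extra hypothesis ${\rm Hom}(S,A)=0$ that holds in your situation. For a minimal counterexample, take the path algebra of the quiver $2\leftarrow 1\rightarrow 3$, let $A=P_1$ (indecomposable with top $S_1$ and socle $S_2\oplus S_3$, so ${\rm Hom}(S_1,A)=0$) and $S=S_1$; one computes ${\rm Ext}^1(S_1,P_1)\cong k$, and the unique non-split extension is the representation $(k^2,k,k)$ with structure maps the two coordinate projections $k^2\to k$, which visibly decomposes as $I_2\oplus I_3$.

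In your setting nothing you have written rules out a decomposition $N\cong \Omega'\oplus M_{s_2}(l,r,\eta)$ with $\Omega'$ an indecomposable string module of $(s_1{+}1,s_1)$-type and $s_1+s_2=s$; both putative summands do occur as quotients of $M_s(l,r,\eta)$, which is exactly what such a splitting would require. The paper's argument (modelled on \cite[Lemma~3.28]{Ch5}) closes this with Lemma~\ref{5.18}, whose control on submodules of band and string modules is precisely the missing structural input; in effect one produces the embedding directly into the specific string module $\Omega^sV(l,r)$ (or $\Omega^sV(n{-}l,r{+}l)$, according to the parity of $s$). If you wish to salvage your construction, one route is to use that ${\rm soc}(N)={\rm soc}(sP(l,r))$: any direct decomposition of $N$ then extends to one of its injective hull $sP(l,r)$ and forces $sP(l,r)/N\cong M_s(n{-}l,r{+}l,-\eta q^l)/W$ to decompose as well; you must then choose $W$ so that this quotient is indecomposable and justify that choice, which again requires Lemma~\ref{5.18} (or its dual) rather than the bare non-splitness of your extension.
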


\begin{proof}
It is similar to \cite[Lemma 3.28]{Ch5} by using Lemma \ref{5.18}.
\end{proof}

\begin{lemma}\label{5.20}
Let $r, r'\in{\mathbb Z}_n$, $\eta\in{\mathbb P}^1(k)$ and $s\>1$. Then
$M_s(1, r, \eta)\ot M_s(1, r', \eta)$ contains a submodule isomorphic to
$M_s(n-1, r+r'+1, -\eta q)$.
\end{lemma}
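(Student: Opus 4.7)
Tensor the short exact sequence $0\to V(1,r)\to M_1(1,r,\eta)\to V(n-1,r+1)\to 0$ on the right with $M_1(1,r',\eta)$. Lemma \ref{3.8} identifies the left term as $M_1(1,r+r',\eta)$, and Theorem \ref{3.16} (with $l=n-1$, $l'=1$, using $(n-1)_q=-q^{-1}$ to simplify the band parameter to $-\eta q$) gives the right term as $M_1(n-1,r+r'+1,-\eta q)\oplus P$ for some projective $P$. Splitting off $P$ (injective), $M_1(1,r,\eta)\otimes M_1(1,r',\eta)=\tilde M\oplus P$ with
$$0\to M_1(1,r+r',\eta)\to\tilde M\to M_1(n-1,r+r'+1,-\eta q)\to 0.$$
The ${\rm Ext}^1$ argument of Lemma \ref{5.14} gives only two options: the split extension (yielding the desired submodule) or $\tilde M\cong P(1,r+r')$ (which cannot contain $M_1(n-1,r+r'+1,-\eta q)$ as a submodule, since the socles $V(1,r+r')$ and $V(n-1,r+r'+1)$ are nonisomorphic). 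To rule out the latter, Hopf adjunction together with Lemma \ref{3.13} gives ${\rm Hom}(V(n-1,r+r'+1),M_1(1,r,\eta)\otimes M_1(1,r',\eta))\cong{\rm Hom}(V(n-1,r+r'+1)\otimes M_1(n-1,1-r',-\eta q),M_1(1,r,\eta))$. By Theorem \ref{3.17} with $l=l'=n-1$ (using the parameter identity $(-\eta q)q^{n-2}(1)_q/(n-1)_q=\eta$), the first tensor factor contains $M_1(1,r,\eta)$ as an indecomposable direct summand, so the Hom has dimension at least $\dim{\rm End}\,M_1(1,r,\eta)=1$, forcing the split case.

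\textbf{Inductive step.} For $s\geq 2$ the same construction gives $M_s(1,r,\eta)\otimes M_s(1,r',\eta)=\tilde M_s\oplus P_s$ with $P_s$ projective and
$$0\to sM_s(1,r+r',\eta)\to\tilde M_s\to sM_s(n-1,r+r'+1,-\eta q)\to 0.$$
The analogous Hom/duality calculation yields $\dim{\rm Hom}(V(n-1,r+r'+1),M_s(1,r,\eta)\otimes M_s(1,r',\eta))\geq s$ via $\dim{\rm End}\,M_s(1,r,\eta)=s$, so $\tilde M_s$ contains $sV(n-1,r+r'+1)$ in its socle---exactly the socle of one copy of $M_s(n-1,r+r'+1,-\eta q)$. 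I then lift the submodule inductively: using the filtration $M_1\subseteq M_2\subseteq\cdots\subseteq M_s$ and the pullback construction of band modules recalled in Section \ref{2}, compatible submodules $N_i\cong M_i(n-1,r+r'+1,-\eta q)$ are built inside $M_i(1,r,\eta)\otimes M_i(1,r',\eta)\subseteq M_s(1,r,\eta)\otimes M_s(1,r',\eta)$, with $N_1$ furnished by the base case and $N_s$ the required submodule.

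\textbf{Main obstacle.} The delicate point is the inductive step. Unlike $s=1$, where Lemma \ref{5.14} leaves only two possibilities, for $s\geq 2$ the extension $\tilde M_s$ can take many indecomposable shapes, and I must verify that the successive lifts $N_{i-1}\subseteq N_i$ retain the band parameter $-\eta q$ rather than degenerating into projective summands---that is, the inductive choice of lift must be compatible with the pullback description of $M_s(l,r,\eta)$ recalled in Section \ref{2}. If the abstract pullback argument proves insufficient, the fallback is an explicit basis calculation in the spirit of Lemma \ref{3.10}: set $u_{1,j}=\sum_{i+i'\equiv 0\,({\rm mod}\,n)}c_{i,i',j}\,v_{i,?}\otimes w_{i',?}$ with coefficients $c_{i,i',j}\in k$ determined by forcing the recursion $du_{1,j}=u_{n,j-1}-\eta u_{n,j}$, and verify directly that the $a$-orbits $u_{i,j}=a^{i-1}u_{1,j}$ span a submodule isomorphic to $M_s(n-1,r+r'+1,-\eta q)$.
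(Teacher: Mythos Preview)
Your base case $s=1$ is correct and gives a pleasant conceptual alternative to the paper's explicit construction: the combination of Lemma~\ref{5.14} with the Hopf adjunction argument cleanly rules out $\tilde M\cong P(1,r+r')$.

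The inductive step, however, is not a proof. Knowing that $\tilde M_s$ has $sV(n-1,r+r'+1)$ in its socle does not force a submodule isomorphic to the \emph{single} band module $M_s(n-1,r+r'+1,-\eta q)$; a priori $\tilde M_s$ could contain, say, $s$ copies of $M_1(n-1,r+r'+1,-\eta q)$ or smaller band modules $M_i$ with $i<s$, all of which would account for the same socle. Your proposed ``lift $N_{s-1}\subset N_s$ via the pullback description'' is exactly the hard point and you have not carried it out: one must show that the submodule $N_{s-1}\subset M_{s-1}\otimes M_{s-1}\subset M_s\otimes M_s$ admits a nontrivial extension by $M_1(n-1,r+r'+1,-\eta q)$ \emph{inside} $M_s\otimes M_s$, and nothing in your argument provides this. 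The obstruction you name in ``Main obstacle'' is real, and the abstract Hom/Ext machinery you invoke does not resolve it.

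The paper's proof is precisely your ``fallback'': an explicit basis calculation. It writes down, for $\eta\in k$, the elements
\[
u_{1,j}=(-q)^j\sum_{l=1}^{j}\sum_{m=1}^{n-1}(-1)^{m-1}q^{-m(m-1)/2}\,v_{m,l}\otimes v_{n-m,\,j+1-l}
\]
and, for $2\le i\le n$,
\[
u_{i,j}=(-q)^j\sum_{l=1}^{j}\bigl(v_{i-1,l}\otimes v_{n,\,j+1-l}-v_{n,l}\otimes v_{i-1,\,j+1-l}\bigr),
\]
and verifies directly that their span is a submodule isomorphic to $M_s(n-1,1,-\eta q)$. Note that the formula involves a genuine sum over $l=1,\dots,j$ (mixing the band indices of the two tensor factors), which is why the naive ``lift $N_{s-1}$ to $N_s$'' picture is too coarse: the required submodule is not contained in $M_{s-1}\otimes M_s$ or $M_s\otimes M_{s-1}$.
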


\begin{proof}
By Lemma \ref{3.8}, it is enough to show the lemma for $r=r'=0$.
Assume that $\eta\in k$ and let $M=M_s(1, 0, \eta)\ot M_s(1, 0, \eta)$.
By Lemma \ref{3.10}, there is a standard basis $\{v_{i,j}|1\<i\<n, 1\<j\<s\}$
in $M_s(1,0,\eta)$ such that
\begin{equation*}
\begin{array}{ll}
av_{i,j}=\left\{\begin{array}{ll}
v_{i+1,j} , & 1\<i<n,\\
0 , & i=n,\\
\end{array}\right. &
bv_{i,j}=q^iv_{i,j},\\
dv_{i,j}=\left\{\begin{array}{ll}
v_{n,j-1}+\eta qv_{n,j}, & i=1,\\
\a_{i-1}(n-1)v_{i-1,j}, & 1<i\<n-1,\\
0, & i=n,\\
\end{array}\right.& cv_{i,j}=q^iv_{i,j},\\
\end{array}
\end{equation*}
where $1\<i\<n$, $1\<j\<s$ and $v_{n,0}=0$.
Then $\{v_{i,j}\ot v_{l,m}|1\<i, l\<n, 1\<j, m\<s\}$ is a basis of $M$.
For $1\<i\<n$ and $1\<j\<s$, let $u_{i,j}\in M$ be defined by
$$u_{1,j}=(-q)^j\sum_{l=1}^j\sum_{m=1}^{n-1}(-1)^{m-1}q^{-\frac{m(m-1)}{2}}v_{m,l}\ot v_{n-m,j+1-l}$$
and
$$u_{i,j}=(-q)^j\sum_{l=1}^j(v_{i-1,l}\ot v_{n,j+1-l}-v_{n,l}\ot v_{i-1,j+1-l})$$
for $2\<i\<n$. Then $\{u_{i,j}|1\<i\<n, 1\<j\<s\}$ are linearly independent over $k$.
A tedious but standard verification shows that $N={\rm span}\{u_{i,j}|1\<i\<n, 1\<j\<s\}$
is a submodule of $M$, and $N\cong M_s(n-1, 1, -\eta q)$ by Lemma \ref{3.10}.

For $\eta=\oo$, using Lemma \ref{3.9}, one can similarly show that $M_s(1, 0, \oo)\ot M_s(1, 0, \oo)$
contains a submodule isomorphic to $M_s(n-1, 1, \oo)$.
This completes the proof.
\end{proof}

\begin{lemma}\label{5.21}
Let $r, r'\in{\mathbb Z}_n$, $\eta\in{\mathbb P}^1(k)$ and $m\>s\>1$. Then
$$\begin{array}{rl}
&M_m(1, r, \eta)\ot M_s(1, r', \eta)\\
\cong&M_s(1, r+r', \eta)\oplus M_s(n-1, r+r'+1, -\eta q)\\
&\oplus(m-1)sP(1, r+r')\oplus(\oplus_{i=1}^{c(n-2)}msP(2i+1, r+r'-i)).\\
\end{array}$$
\end{lemma}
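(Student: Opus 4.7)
The plan is to follow the strategy sketched in the introduction of this subsection. First I would construct a short exact sequence
$$0\to M_m(1,r,\eta)\to\O^m V\to V(n-1,r+1)\to 0,$$
where $V=V(1,r)$ when $m$ is odd and $V=V(n-1,r+1)$ when $m$ is even. In both cases $\O^m V$ is a string module of $(m+1,m)$-type whose socle $mV(1,r)$ coincides with that of $M_m(1,r,\eta)$; using Lemma~\ref{5.19} and the explicit description of $\O^m V$ via the minimal projective resolution recalled in \seref{2}, I would obtain an embedding of $M_m(1,r,\eta)$ into $\O^m V$, and the cokernel is forced to be $V(n-1,r+1)$ by dimension count. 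Tensoring with $M_s(1,r',\eta)$ would give the exact sequence
$$0\to K\to\mathcal{M}\xrightarrow{f}\mathcal{N}\to 0,$$
with $K=M_m(1,r,\eta)\ot M_s(1,r',\eta)$, $\mathcal M=\O^mV\ot M_s(1,r',\eta)$, and $\mathcal N=V(n-1,r+1)\ot M_s(1,r',\eta)$.

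Proposition~\ref{5.6} (for $m$ odd, with $l=l'=1$) or Proposition~\ref{5.10} (for $m$ even, with $l=n-1$ and $l'=1$), together with the identity $(n-1)_q=-q^{-1}$, would give
$$\mathcal{M}\cong M_s(n-1,r+r'+1,-\eta q)\oplus msP(1,r+r')\oplus\bigoplus_{i=1}^{c(n-2)}(m+1)sP(2i+1,r+r'-i),$$
and Theorem~\ref{3.16} would give
$$\mathcal{N}\cong M_s(n-1,r+r'+1,-\eta q)\oplus\bigoplus_{i=1}^{c(n-2)}sP(2i+1,r+r'-i).$$
In particular each of $\mathcal M$ and $\mathcal N$ has a single non-projective indecomposable summand, and these summands are isomorphic.

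Next I would exhibit $M_s(1,r+r',\eta)\oplus M_s(n-1,r+r'+1,-\eta q)$ as a submodule of $K$. The socle inclusion $V(1,r)\hookrightarrow M_s(1,r,\eta)$ combined with the chain of $(i,i)$-type submodules $M_s(1,r,\eta)\hookrightarrow M_m(1,r,\eta)$ (which uses $s\<m$) and Lemma~\ref{3.8} yields an injection $M_s(1,r+r',\eta)\cong V(1,r)\ot M_s(1,r',\eta)\hookrightarrow K$. Lemma~\ref{5.20} together with the same embedding $M_s(1,r,\eta)\hookrightarrow M_m(1,r,\eta)$ yields $M_s(n-1,r+r'+1,-\eta q)\hookrightarrow M_s(1,r,\eta)\ot M_s(1,r',\eta)\hookrightarrow K$. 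Since the socles $sV(1,r+r')$ and $sV(n-1,r+r'+1)$ involve distinct simple modules, the sum is direct.

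The key step will be to show that the band summand of $\mathcal M$ can be chosen so as to lie in $K$. Decomposing $\mathcal M=M_B\oplus M_P$ with $M_B\cong M_s(n-1,r+r'+1,-\eta q)$, the socle of $M_P$ contains no summand isomorphic to $V(n-1,r+r'+1)$ (by direct inspection of the projective indecomposables appearing in $M_P$), so the submodule $M_s(n-1,r+r'+1,-\eta q)\subseteq K$ found above must project injectively, and hence isomorphically, onto $M_B$. That submodule is therefore a complement of $M_P$ in $\mathcal M$, providing a splitting in which the band summand of $\mathcal M$ is contained in $K$. Using this splitting, $f$ factors through the projective quotient $\mathcal M/M_B\cong M_P$, and Lemma~\ref{5.1} applied to $0\to K/M_B\to M_P\to\mathcal N\to 0$ gives $M_P\cong P(\mathcal N)\oplus Q$ and $K/M_B\cong\O\mathcal N\oplus Q$. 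The identification $\O\mathcal N=\O M_s(n-1,r+r'+1,-\eta q)=M_s(1,r+r',\eta q^n)=M_s(1,r+r',\eta)$ together with $P(\mathcal N)=sP(1,r+r')\oplus\bigoplus_{i=1}^{c(n-2)} sP(2i+1,r+r'-i)$ will pin down $Q=(m-1)sP(1,r+r')\oplus\bigoplus_{i=1}^{c(n-2)} msP(2i+1,r+r'-i)$, and recombining with $M_B$ will give the claimed decomposition. The main obstacle will be the rigorous construction of the initial embedding $M_m(1,r,\eta)\hookrightarrow\O^m V$ for every $\eta\in{\mathbb P}^1(k)$ with the predicted cokernel, together with careful execution of the socle-based splitting argument that identifies the band summand of $\mathcal M$ inside $K$.
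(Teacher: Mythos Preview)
Your proposal is correct and follows essentially the same route as the paper: embed $M_m(1,r,\eta)$ into the string module $\O^mV$ via Lemma~\ref{5.19}, tensor with $M_s(1,r',\eta)$, use Lemma~\ref{5.20} to locate a copy of $M_s(n-1,r+r'+1,-\eta q)$ inside the kernel, identify it with the unique band summand of $\mathcal M$ by the socle argument, and finish with Lemma~\ref{5.1}. The only superfluous step is your separate construction of $M_s(1,r+r',\eta)\subseteq K$ via Lemma~\ref{3.8}; that summand already falls out as $\O\mathcal N$ at the end, so it need not be exhibited in advance.
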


\begin{proof}
It is enough to show the lemma for $r=r'=0$.
We only consider the case that $m$ is odd since the proof is similar for $m$ being even.

Assume that $m$ is odd. Then by Lemma \ref{5.19}, there is an exact sequence
$$0\ra M_m(1, 0, \eta)\ra \O^mV(1,0)\ra V(n-1, 1)\ra 0.$$
Applying $\ot M_s(1, 0, \eta)$ to the above sequence, one gets the following exact sequence
$$\begin{array}{rl}
0\ra M_m(1, 0, \eta)\ot M_s(1, 0, \eta)&\xrightarrow{\s}\O^mV(1, 0)\ot M_s(1, 0, \eta)\\
&\ra V(n-1, 1)\ot M_s(1, 0, \eta)\ra 0.\\
\end{array}$$
By \cite[Theorem 3.10(2)]{Ch4}, $M_m(1, 0, \eta)$ contains a unique submodule $M$ of $(s, s)$-type, and
$M\cong M_s(1, 0, \eta)$. From Lemma \ref{5.20}, one knows that $M\ot M_s(1, 0, \eta)$
contains a submodule isomorphic to $M_s(n-1, 1, -\eta q)$. It follows that $M_m(1, 0, \eta)\ot M_s(1, 0, \eta)$
contains a submodule $N$ isomorphic to $M_s(n-1, 1, -\eta q)$. From Proposition \ref{5.6},
$\O^mV(1, 0)\ot M_s(1, 0, \eta)$ contains submodules $M'$ and $P$ with $M'\cong M_s(n-1, 1, -\eta q)$ and
$P\cong msP(1, 0)\oplus(\oplus_{i=1}^{c(n-2)}(m+1)sP(2i+1, -i))$
such that $\O^mV(1, 0)\ot M_s(1, 0, \eta)=P\oplus M'$. Since $\s$ is a monomorphism,
$\s(N)\cong N\cong M_s(n-1, 1, -\eta q)$, and hence ${\rm soc}(\s(N))\cong sV(n-1, 1)$.
However, ${\rm soc}(P)\cong msV(1, 0)\oplus(\oplus_{i=1}^{c(n-2)}(m+1)sV(2i+1, -i))$
since ${\rm soc}(P(l, r))\cong V(l, r)$ for all $1\<l\<n$ and $r\in\mathbb{Z}$.
It follows that the sum $P+\s(N)$ is direct, and so $\O^mV(1, 0)\ot M_s(1, 0, \eta)=P\oplus M'=P\oplus \s(N)$ by
comparing their lengths. Hence
we have the following exact sequence
$$0\ra M_m(1, 0, \eta)\ot M_s(1, 0, \eta)\xrightarrow{\s} P\oplus \s(N)
\xrightarrow{f}V(n-1, 1)\ot M_s(1, 0, \eta)\ra 0.$$
Since $f$ is an epimorphism and $f(\s(N))=0$, $f|_P: P\ra V(n-1, 1)\ot M_s(1, 0, \eta)$
is an epimorphism. By Theorem \ref{3.16}, we have
$$\begin{array}{rl}
&V(n-1, 1)\ot M_s(1, 0, \eta)\\
\cong& M_s(n-1, 1, -\eta q)\oplus(\oplus_{i=c(n-1)}^{n-2}sP(2n-1-2i, i+1))\\
\cong& M_s(n-1, 1, -\eta q)\oplus(\oplus_{i=1}^{c(n-2)}sP(2i+1, -i)).\\
\end{array}$$
Note that $sP(1,0)\cong P(M_s(n-1,1,-\eta q))$ and $\O M_s(n-1,1,-\eta q)\cong M_s(1, 0, \eta)$.
Hence we have
$$P\cong P(V(n-1, 1)\ot M_s(1, 0, \eta))\oplus(m-1)sP(1, 0)\oplus(\oplus_{i=1}^{c(n-2)}msP(2i+1, -i)).$$
It follows from Lemma \ref{5.1} that
$${\rm Ker}(f|_P)\cong M_s(1, 0, \eta)\oplus(m-1)sP(1, 0)\oplus(\oplus_{i=1}^{c(n-2)}msP(2i+1, -i)),$$
and so
$$\begin{array}{rl}
&M_m(1, 0, \eta)\ot M_s(1, 0, \eta)\cong {\rm Ker}(f)={\rm Ker}(f|_P)\oplus\s(N)\\
\cong&M_s(1, 0, \eta)\oplus M_s(n-1, 1, -\eta q)\\
&\oplus(m-1)sP(1, 0)\oplus(\oplus_{i=1}^{c(n-2)}msP(2i+1, -i)).\\
\end{array}$$
\end{proof}

\begin{theorem}\label{5.22}
Let $1\<l, l'<n$, $r, r'\in{\mathbb Z}_n$, $\eta\in{\mathbb P}^1(k)$ and $m\>s\>1$.
Assume that $l+l'\<n$, and let $l_1={\rm min}\{l, l'\}$ and $l_2={\rm max}\{l, l'\}$. Then
$$\begin{array}{rl}
&M_m(l, r, \eta q^{1-l}(l)_q)\ot M_s(l', r', \eta q^{1-l'}(l')_q)\\
\cong&(\oplus_{i=0}^{l_1-1}M_s(l+l'-1-2i, r+r'+i, \eta q^{2i-l-l'+2}(l+l'-1-2i)_q))\\
&\oplus(\oplus_{i=l_2}^{l+l'-1}M_s(n+l+l'-1-2i, r+r'+i, -\eta q(2i-l-l'+1)_q))\\
&\oplus(\oplus_{i=0}^{l_1-1}(m-1)sP(l+l'-1-2i, r+r'+i))\\
&\oplus(\oplus_{1\<i\<c(n-l-l')}msP(l+l'-1+2i, r+r'-i))\\
&\oplus(\oplus_{c(l+l'-1)\<i\<l_2-1}msP(n+l+l'-1-2i, r+r'+i)).\\
\end{array}$$
\end{theorem}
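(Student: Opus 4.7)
The plan is to induct on $l+l'$, after reducing to $r=r'=0$ by Lemma \ref{3.8}. For the base case $l=l'=1$, the parameters $\eta q^{1-l}(l)_q$ both collapse to $\eta$ since $(1)_q=1$, and the assertion reduces directly to Lemma \ref{5.21}: the two band module summands become $M_s(1,r+r',\eta)$ (the $i=0$ term) and $M_s(n-1,r+r'+1,-\eta q)$ (the $i=1$ term), while the two families of projective summands in the theorem reproduce the remaining summands of Lemma \ref{5.21}.

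For the inductive step, I will assume the theorem for all pairs of sum $<l+l'$ (with that sum still at most $n$). By symmetry we may assume $l\>l'$, hence $l>1$. Let $\xi=\eta q^{2-l}(l-1)_q$, chosen so that the upper summand produced by Lemma \ref{3.15} (or by Lemma \ref{3.11} when $l-1=1$) is precisely the band module with the desired parameter:
$$V(2,0)\ot M_m(l-1,0,\xi)\cong M_m(l,0,\eta q^{1-l}(l)_q)\oplus M_m(l-2,1,\eta q^{3-l}(l-2)_q),$$
with the second summand absent when $l=2$. The key step is to evaluate $V(2,0)\ot M_m(l-1,0,\xi)\ot M_s(l',0,\eta q^{1-l'}(l')_q)$ in two different ways. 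On the one hand, the above identity expresses it as the unknown product $M_m(l,0,\eta q^{1-l}(l)_q)\ot M_s(l',0,\eta q^{1-l'}(l')_q)$ plus $M_m(l-2,1,\eta q^{3-l}(l-2)_q)\ot M_s(l',0,\eta q^{1-l'}(l')_q)$, the latter being supplied by the induction hypothesis when $l\>3$ (the case $l=2$ will be handled directly from Lemma \ref{5.21} combined with Lemma \ref{3.11}). On the other hand, one may first apply the induction hypothesis to $M_m(l-1,0,\xi)\ot M_s(l',0,\eta q^{1-l'}(l')_q)$ and then tensor the result by $V(2,0)$: each band module summand re-expands via Lemma \ref{3.15}, and each projective summand re-expands via Theorem \ref{3.3}. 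Equating the two expansions and applying the Krull--Schmidt Theorem isolates the desired tensor product.

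The main obstacle will be the bookkeeping of parameters and index ranges. First, one must verify that the resonance $\eta q^{1-l''}(l'')_q$ is preserved for every band module $M_s(l'',*,*)$ appearing in either expansion, and that the $q$-integer factors combine so as to produce exactly the asserted parameters $\eta q^{2i-l-l'+2}(l+l'-1-2i)_q$ and $-\eta q(2i-l-l'+1)_q$ on the right-hand side. Second, the parity of $l+l'$ shifts the boundary value $c(l+l'-1)$ by one between the two expansions, so the ranges of the projective summands must be reconciled in parallel arguments for $l+l'$ even and $l+l'$ odd. Finally, several edge situations ($l=2$, $l+l'=n$, $l=l'$, and $l-1=n-1$) require the boundary versions of the tensor identities drawn from Lemma \ref{3.11}, Corollary \ref{3.14}, and Proposition \ref{3.1}, each of which has to be checked to fit into the same combinatorial template as the generic case.
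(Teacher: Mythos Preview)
Your approach is essentially the same as the paper's: reduce to $r=r'=0$, induct on $l+l'$ with the base case supplied by Lemma~\ref{5.21}, and for the inductive step compute $V(2,0)\ot M_m(l-1,0,\eta q^{2-l}(l-1)_q)\ot M_s(l',0,\eta q^{1-l'}(l')_q)$ in two ways (once by expanding the first two factors via Lemma~\ref{3.15}/\ref{3.11} and once by first applying the induction hypothesis to the last two factors), then invoke Krull--Schmidt. The paper distinguishes the five relative-order cases $l<l'-1$, $l=l'-1$, $l=l'$, $l=l'+1$, $l>l'+1$ and writes out only $l=l'$; your use of commutativity to assume $l\geqslant l'$ is a harmless simplification.

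One small correction: when $l=2$ the second band summand is not simply ``absent''. Lemma~\ref{3.11} gives
\[
V(2,0)\ot M_m(1,0,\eta)\cong M_m(2,0,\eta q^{-1}(2)_q)\oplus mV(n,1),
\]
so an extra projective summand $mV(n,1)$ takes its place and must be carried through the comparison (this is exactly what the paper does in its $l=l'=2$ subcase). Likewise, the edge ``$l-1=n-1$'' that you list cannot occur since $l<n$; the relevant boundary is rather when one of the band summands produced by the induction hypothesis has first index $1$ or $n-1$, which is where Lemma~\ref{3.11} and Corollary~\ref{3.14} enter in place of Lemma~\ref{3.15}.
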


\begin{proof}
It is enough to show the theorem for $r=r'=0$. We prove it by the induction on $l+l'$.
For $l+l'=2$, it follows from Lemma \ref{5.21}.
Now assume that $l+l'>2$. Here we only consider the case of $l=l'$ since the proof are similar
for the other cases: $l<l'-1$, $l=l'-1$, $l>l'+1$ and $l=l'+1$.
Suppose $l=l'$. Then $l\>2$. By the induction hypothesis,
applying $V(2,0)\ot$ and then using Proposition \ref{3.1}, Theorems \ref{3.3},
\ref{3.5} and \ref{3.16}, a tedious but standard computation shows that
$$\begin{array}{rl}
&V(2, 0)\ot M_m(l-1, 0, \eta q^{2-l}(l-1)_q)\ot M_s(l, 0, \eta q^{1-l}(l)_q)\\
\cong&(\oplus_{i=0}^{l-2}(V(2, 0)\ot M_s(2l-2-2i, i, \eta q^{2i-2l+3}(2l-2-2i)_q))\\
&\oplus(\oplus_{i=l}^{2l-2}V(2, 0)\ot M_s(n+2l-2-2i, i, -\eta q(2i-2l+2)_q))\\
&\oplus(\oplus_{i=0}^{l-2}(m-1)sV(2, 0)\ot P(2l-2-2i, i))\\
&\oplus(\oplus_{i=1}^{c(n-2l+1)}msV(2, 0)\ot P(2l-2+2i, -i))\oplus msV(2, 0)\ot V(n, l-1)\\
\cong&(\oplus_{i=0}^{l-2}M_s(2l-1-2i, i, \eta q^{2i-2l+2}(2l-2i-1)_q))\\
&\oplus(\oplus_{i=1}^{l-1}M_s(2l-1-2i, i, \eta q^{2i-2l+2}(2l-2i-1)_q))\\
&\oplus(\oplus_{i=l}^{2l-2}M_s(n+2l-1-2i, i, -\eta q(2i-2l+1)_q))\\
&\oplus(\oplus_{i=l+1}^{2l-1}M_s(n+2l-1-2i, i, -\eta q(2i-2l+1)_q))\\
&\oplus(\oplus_{i=0}^{l-2}(m-1)sP(2l-1-2i, i))\oplus(\oplus_{i=1}^{l-1}(m-1)sP(2l-1-2i,i))\\
&\oplus(\oplus_{1\<i\<c(n-2l)}msP(2l-1+2i, -i))\\
&\oplus(\oplus_{i=0}^{c(n-2l)}msP(2l-1+2i, -i))\oplus msP(n-1, l).\\
\end{array}$$
If $l>2$, then by Theorem \ref{3.16} and the induction hypothesis, we have
$$\begin{array}{rl}
&V(2, 0)\ot M_m(l-1, 0, \eta q^{2-l}(l-1)_q)\ot M_s(l, 0, \eta q^{1-l}(l)_q)\\
\cong&M_m(l, 0, \eta q^{1-l}(l)_q)\ot M_s(l, 0, \eta q^{1-l}(l)_q)\\
&\oplus M_m(l-2, 1, \eta q^{3-l}(l-2)_q)\ot M_s(l, 0, \eta q^{1-l}(l)_q)\\
\cong&M_m(l, 0, \eta q^{1-l}(l)_q)\ot M_s(l, 0, \eta q^{1-l}(l)_q)\\
&\oplus(\oplus_{i=0}^{l-3}M_s(2l-3-2i, i+1, \eta q^{2i-2l+4}(2l-3-2i)_q))\\
&\oplus(\oplus_{i=l}^{2l-3}M_s(n+2l-3-2i, i+1, -\eta q(2i-2l+3)_q))\\
&\oplus(\oplus_{i=0}^{l-3}(m-1)sP(2l-3-2i, i+1))\\
&\oplus(\oplus_{i=1}^{c(n-2l+2)}msP(2l-3+2i, 1-i))\oplus msP(n-1, l))\\
\cong&M_m(l, 0, \eta q^{1-l}(l)_q)\ot M_s(l, 0, \eta q^{1-l}(l)_q)\\
&\oplus(\oplus_{1\<i\<l-2}M_s(2l-1-2i, i, \eta q^{2i-2l+2}(2l-1-2i)_q))\\
&\oplus(\oplus_{l+1\<i\<2l-2}M_s(n+2l-1-2i, i, -\eta q(2i-2l+1)_q))\\
&\oplus(\oplus_{1\<i\<l-2}(m-1)sP(2l-1-2i, i))\\
&\oplus(\oplus_{i=0}^{c(n-2l)}msP(2l-1+2i, -i))\oplus msP(n-1, l)).\\
\end{array}$$
If $l=2$, by Lemma \ref{3.11} and Theorem \ref{3.17}, one can similarly show
the above isomorphism.
Hence by Krull-Schmidt Theorem, we have
$$\begin{array}{rl}
&M_m(l, 0, \eta q^{1-l}(l)_q)\ot M_s(l, 0, \eta q^{1-l}(l)_q)\\
\cong&(\oplus_{i=0}^{l-1}M_s(2l-1-2i, i, \eta q^{2i-2l+2}(2l-2i-1)_q))\\
&\oplus(\oplus_{i=l}^{2l-1}M_s(n+2l-1-2i, i, -\eta q(2i-2l+1)_q))\\
&\oplus(\oplus_{i=0}^{l-1}(m-1)sP(2l-1-2i, i))\\
&\oplus(\oplus_{1\<i\<c(n-2l)}msP(2l-1+2i, -i)).\\
\end{array}$$
This completes the proof.
\end{proof}

\begin{corollary}\label{5.23}
Let $1\<l, l'<n$, $r, r'\in{\mathbb Z}_n$, $\a, \eta\in{\mathbb P}^1(k)$ and $m\>s\>1$.
Assume that $l+l'\<n$, and let $l_1={\rm min}\{l, l'\}$ and $l_2={\rm max}\{l, l'\}$.
If $\a  q^{1-l'}(l')_q=\eta q^{1-l}(l)_q$, then
$$\begin{array}{rl}
&M_m(l, r, \a)\ot M_s(l', r', \eta)\\
\cong&(\oplus_{i=0}^{l_1-1}M_s(l+l'-1-2i, r+r'+i, \eta q^{2i-l+1}\frac{(l+l'-1-2i)_q}{(l')_q}))\\
&\oplus(\oplus_{i=l_2}^{l+l'-1} M_s(n+l+l'-1-2i, r+r'+i, -\eta q^{l'}\frac{(2i-l-l'+1)_q)}{(l')_q}))\\
&\oplus(\oplus_{i=0}^{l_1-1}(m-1)sP(l+l'-1-2i, r+r'+i))\\
&\oplus(\oplus_{1\<i\<c(n-l-l')}msP(l+l'-1+2i, r+r'-i))\\
&\oplus(\oplus_{c(l+l'-1)\<i\<l_2-1}msP(n+l+l'-1-2i, r+r'+i)).\\
\end{array}$$
\end{corollary}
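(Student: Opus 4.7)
My plan is to deduce this corollary from Theorem \ref{5.22} by a change of parameter. Observe that Theorem \ref{5.22} decomposes $M_m(l,r,\a')\ot M_s(l',r',\eta')$ only in the special situation where the two band-module parameters are of the linked form $\a'=\mu q^{1-l}(l)_q$ and $\eta'=\mu q^{1-l'}(l')_q$ for a common $\mu\in\mathbb P^1(k)$. The hypothesis $\a q^{1-l'}(l')_q=\eta q^{1-l}(l)_q$ is exactly the compatibility condition needed to exhibit our $(\a,\eta)$ in this linked form.

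\textbf{Step 1 (change of parameter).} Since $1\<l,l'<n$ and $q$ is a primitive $n$th root of unity, $(l)_q$ and $(l')_q$ are both nonzero. I set
\[
\mu:=\a q^{l-1}/(l)_q\in\mathbb P^1(k),
\]
where we use the convention $\mu=\oo$ if $\a=\oo$. Then $\mu q^{1-l}(l)_q=\a$, and using $\a q^{1-l'}(l')_q=\eta q^{1-l}(l)_q$ one checks
\[
\mu q^{1-l'}(l')_q=\a q^{l-l'}(l')_q/(l)_q=\eta.
\]
Hence $(\a,\eta)$ is the pair associated to $\mu$ in the form required by Theorem \ref{5.22}.

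\textbf{Step 2 (apply Theorem \ref{5.22}).} Substituting $\mu$ for $\eta$ in the statement of Theorem \ref{5.22} immediately gives a decomposition of $M_m(l,r,\a)\ot M_s(l',r',\eta)$ into the expected sum of band modules plus the projective terms displayed in the corollary. The projective part is visibly identical (it does not involve $\mu$), so the only remaining task is to match the parameters of the band summands.

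\textbf{Step 3 (match band parameters).} For the first family, Theorem \ref{5.22} produces band modules with parameter $\mu q^{2i-l-l'+2}(l+l'-1-2i)_q$. Using $\mu=\a q^{l-1}/(l)_q$ and then $\a/(l)_q=\eta q^{l'-l}/(l')_q$ (which is the hypothesis rearranged), a short manipulation turns this into $\eta q^{2i-l+1}(l+l'-1-2i)_q/(l')_q$, matching the corollary. For the second family, $-\mu q(2i-l-l'+1)_q$ becomes $-\a q^{l}(2i-l-l'+1)_q/(l)_q=-\eta q^{l'}(2i-l-l'+1)_q/(l')_q$ by the same substitution. There is no real obstacle here; the computation is a routine rewriting, and the only point to be careful about is the edge case $\a=\oo$, where the convention $\b\oo=\oo$ for $0\neq\b\in k$ (already fixed at the end of \seref{2}) handles both sides consistently.
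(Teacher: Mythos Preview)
Your proposal is correct and follows exactly the approach the paper intends: the paper's proof is simply ``It follows from Theorem \ref{5.22},'' and your Steps~1--3 spell out the routine change of parameter that makes this deduction explicit.
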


\begin{proof}
It follows from Theorem \ref{5.22}.
\end{proof}

\begin{corollary}\label{5.24}
Let $1\<l, l'<n$, $r, r'\in{\mathbb Z}_n$, $\eta\in{\mathbb P}^1(k)$ and $m\>s\>1$.
Assume that $t=l+l'-(n+1)\>0$. Let $l_1={\rm min}\{l, l'\}$ and $l_2={\rm max}\{l, l'\}$. Then
$$\begin{array}{rl}
&M_m(l, r, \eta q^{1-l}(l)_q)\ot M_s(l', r', \eta q^{1-l'}(l')_q)\\
\cong&(\oplus_{i=t+1}^{l_1-1}M_s(l+l'-1-2i, r+r'+i, \eta q^{2i-l-l'+2}(l+l'-1-2i)_q))\\
&\oplus(\oplus_{i=l_2}^{n-1}M_s(n+l+l'-1-2i, r+r'+i, -\eta q(2i-l-l'+1)_q))\\
&\oplus(\oplus_{i=t+1}^{l_1-1}(m-1)sP(l+l'-1-2i, r+r'+i))\\
&\oplus(\oplus_{i=c(t)}^{t}msP(l+l'-1-2i, r+r'+i))\\
&\oplus(\oplus_{c(l+l'-1)\<i\<l_2-1}msP(n+l+l'-1-2i, r+r'+i)).\\
\end{array}$$
\end{corollary}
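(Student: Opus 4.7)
The plan is to derive the stated decomposition from Theorem \ref{5.22} by applying the duality $(-)^*$ to an instance of that theorem with complementary indices. Set $\tilde{l}=n-l$, $\tilde{l}'=n-l'$, $\tilde{r}=1-r$ and $\tilde{r}'=1-r'$. The hypothesis $t=l+l'-(n+1)\geqslant 0$ gives $1\leqslant\tilde{l},\tilde{l}'\leqslant n-1$ and $\tilde{l}+\tilde{l}'=2n-l-l'\leqslant n-1$, so Theorem \ref{5.22} applies to $M_m(\tilde{l},\tilde{r},\eta q^{1-\tilde{l}}(\tilde{l})_q)\ot M_s(\tilde{l}',\tilde{r}',\eta q^{1-\tilde{l}'}(\tilde{l}')_q)$. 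Applying $(-)^*$, using $(A\ot B)^*\cong B^*\ot A^*$, the commutativity of $\ot$ for $H_n(1,q)$-modules and Lemma \ref{3.13}, the left-hand side becomes $M_s(l',r',-\eta q(\tilde{l}')_q)\ot M_m(l,r,-\eta q(\tilde{l})_q)$; the identity $(n-k)_q=-q^{-k}(k)_q$, immediate from $q^n=1$, simplifies the two twist scalars to $\eta q^{1-l'}(l')_q$ and $\eta q^{1-l}(l)_q$, producing precisely the left-hand side of the corollary.

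On the right-hand side I dualize each summand using Lemmas \ref{3.12} and \ref{3.13} and relabel the index of summation. The substitution $j=n-1-i$ carries the first band family of Theorem \ref{5.22} onto the block $M_s(n+l+l'-1-2j,r+r'+j,-\eta q(2j-l-l'+1)_q)$ for $j=l_2,\ldots,n-1$, and the same substitution carries the second band family onto the block $M_s(l+l'-1-2j,r+r'+j,\eta q^{2j-l-l'+2}(l+l'-1-2j)_q)$ for $j=t+1,\ldots,l_1-1$. The two ``outer'' projective families of Theorem \ref{5.22}, having types $\tilde{l}+\tilde{l}'-1-2i$ and $n+\tilde{l}+\tilde{l}'-1-2i$, transform under $j=t+1+i$ into the blocks $(m-1)sP(l+l'-1-2j,r+r'+j)$ for $j=t+1,\ldots,l_1-1$ and $msP(n+l+l'-1-2j,r+r'+j)$ for $c(l+l'-1)\leqslant j\leqslant l_2-1$, respectively. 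The ``middle'' projective family, with $1\leqslant i\leqslant c(n-\tilde{l}-\tilde{l}')=c(t+1)$, transforms under $j=t+1-i$; the identity $c(t)+c(t+1)=t+1$ then converts this range into $c(t)\leqslant j\leqslant t$, yielding the new block $msP(l+l'-1-2j,r+r'+j)$ that is the one feature of the $t\geqslant 0$ decomposition absent from Theorem \ref{5.22}.

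The main obstacle is purely computational: one must verify that the dualized $r$-coordinates on the projectives, obtained via the shift $r\mapsto 1-l-r$ of Lemma \ref{3.12}, reduce modulo $n$ to the stated $r+r'+j$ after the index substitution, and that every dualized twist scalar on the band summands collapses to the stated form via $(n-k)_q=-q^{-k}(k)_q$ together with $q^n=1$. Once these scalar and index identities are checked case by case, the Krull--Schmidt Theorem yields the decomposition in the stated form.
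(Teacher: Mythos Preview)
Your proposal is correct and follows essentially the same approach as the paper: apply Theorem~\ref{5.22} to the complementary pair $(n-l,n-l')$, then dualize via $(-)^*$ using Lemmas~\ref{3.12} and~\ref{3.13} together with the identity $(n-k)_q=-q^{-k}(k)_q$. The paper carries this out after first reducing to $r=r'=0$ and writes out the intermediate decomposition explicitly, whereas you keep general $r,r'$ and describe the reindexings $j=n-1-i$ and $j=t+1\pm i$ more abstractly; both routes amount to the same computation.
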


\begin{proof}
It is enough to show the corollary for $r=r'=0$. Since $1\<l, l'<n$ and $l+l'>n$,
$1\<n-l, n-l'<n$ and $(n-l)+(n-l')<n$. Hence by Theorem \ref{5.22}, we have
$$\begin{array}{rl}
&M_m(n-l, 1, -\eta q(l)_q)\ot M_s(n-l', 1, -\eta q(l')_q)\\
\cong&M_m(n-l, 1, \eta q^{1+l}(n-l)_q)\ot M_s(l', 1, \eta q^{1+l'}(n-l')_q)\\
\cong&(\oplus_{i=0}^{n-l_2-1}M_s(2n-l-l'-1-2i, i+2, \eta q^{2i+l+l'+2}(2n-l-l'-1-2i)_q))\\
&\oplus(\oplus_{i=n-l_1}^{2n-l-l'-1}M_s(3n-l-l'-1-2i, i+2, -\eta q(2i-2n+l+l'+1)_q))\\
&\oplus(\oplus_{i=0}^{n-l_2-1}(m-1)sP(2n-l-l'-1-2i, 2+i))\\
&\oplus(\oplus_{i=1}^{c(l+l'-n)}msP(2n-l-l'-1+2i, 2-i))\\
&\oplus(\oplus_{c(2n-l-l'-1)\<i\<n-l_1-1}msP(3n-l-l'-1-2i, 2+i)).\\
\end{array}$$
Then by applying the duality $(-)^*$ to the above isomorphism,
the corollary follows from Lemmas \ref{3.12} and \ref{3.13}.
\end{proof}

\begin{corollary}\label{5.25}
Let $1\<l, l'<n$, $r, r'\in{\mathbb Z}_n$, $\a, \eta\in{\mathbb P}^1(k)$ and $m\>s\>1$.
Assume that $t=l+l'-(n+1)\>0$. Let $l_1={\rm min}\{l, l'\}$ and $l_2={\rm max}\{l, l'\}$.
If $\a  q^{1-l'}(l')_q=\eta q^{1-l}(l)_q$, then
$$\begin{array}{rl}
&M_m(l, r, \a)\ot M_s(l', r', \eta)\\
\cong&(\oplus_{i=t+1}^{l_1-1}M_s(l+l'-1-2i, r+r'+i, \eta q^{2i-l+1}\frac{(l+l'-1-2i)_q}{(l')_q}))\\
&\oplus(\oplus_{i=l_2}^{n-1}M_s(n+l+l'-1-2i, r+r'+i, -\eta q^{l'}\frac{(2i-l-l'+1)_q}{(l')_q}))\\
&\oplus(\oplus_{i=t+1}^{l_1-1}(m-1)sP(l+l'-1-2i, r+r'+i))\\
&\oplus(\oplus_{i=c(t)}^{t}msP(l+l'-1-2i, r+r'+i))\\
&\oplus(\oplus_{c(l+l'-1)\<i\<l_2-1}msP(n+l+l'-1-2i, r+r'+i)).\\
\end{array}$$
\end{corollary}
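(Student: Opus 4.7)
My plan is to deduce this corollary from Corollary \ref{5.24} by a change of parameter. Since $q$ is a primitive $n$-th root of unity and $1\<l,l'<n$, the quantum integers $(l)_q$ and $(l')_q$ are both nonzero. Under the hypothesis $\a q^{1-l'}(l')_q=\eta q^{1-l}(l)_q$, I would introduce $\zeta:=\a q^{l-1}/(l)_q$; the hypothesis immediately yields the equivalent expression $\zeta=\eta q^{l'-1}/(l')_q$, so $\a=\zeta q^{1-l}(l)_q$ and $\eta=\zeta q^{1-l'}(l')_q$.

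Next, I would apply Corollary \ref{5.24} to $M_m(l,r,\a)\ot M_s(l',r',\eta)$ written in the form $M_m(l,r,\zeta q^{1-l}(l)_q)\ot M_s(l',r',\zeta q^{1-l'}(l')_q)$. This directly produces a decomposition with the correct band and projective summands; the projective summands carry no dependence on $\zeta$ and so match the right-hand side of the present statement verbatim.

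The remaining step is to rewrite the scalar parameters of the band summands in terms of $\eta$. Using $\zeta=\eta q^{l'-1}/(l')_q$, one checks $\zeta q^{2i-l-l'+2}(l+l'-1-2i)_q=\eta q^{2i-l+1}(l+l'-1-2i)_q/(l')_q$ and $-\zeta q(2i-l-l'+1)_q=-\eta q^{l'}(2i-l-l'+1)_q/(l')_q$, which are exactly the scalars appearing in the statement. I do not anticipate any serious obstacle; this corollary is a pure reparametrization of Corollary \ref{5.24}, the only substantive input being the identification of the common parameter $\zeta=\a q^{l-1}/(l)_q=\eta q^{l'-1}/(l')_q$.
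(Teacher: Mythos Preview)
Your proposal is correct and matches the paper's approach exactly: the paper's proof is simply ``It follows from Corollary \ref{5.24}'', and you have spelled out the reparametrization $\zeta=\a q^{l-1}/(l)_q=\eta q^{l'-1}/(l')_q$ that makes this work, including the scalar checks for the band summands.
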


\begin{proof}
It follows from Corollary \ref{5.24}.
\end{proof}

\centerline{ACKNOWLEDGMENTS}

The authors would like to show their gratitude to the referee for his/her helpful suggestions.
This work is supported by NSF of China (No. 11571298).\\

\end{document}